\documentclass[11pt,reqno]{amsart}
\usepackage[hidelinks]{hyperref}
\usepackage{caption}
 \usepackage{amssymb,amsfonts,amscd,amsbsy, color, amsmath}
\usepackage[mathscr]{eucal}
\usepackage{url}
\usepackage{graphicx}
\usepackage{mathtools}
\usepackage{todonotes}
\usepackage{tabularx}
\usepackage{float, color}
\usepackage{placeins}
\usepackage{tikz}
\usepackage{cite}
\usepackage{bm}
\usepackage{enumitem}

\usepackage{verbatim}
\usepackage{soul}
\usepackage{graphicx}

\newtheorem{theorem}{Theorem}[section]
\newtheorem{lemma}[theorem]{Lemma}
\newtheorem{proposition}[theorem]{Proposition}
\newtheorem{corollary}[theorem]{Corollary}

\newtheorem{conjecture}[theorem]{Conjecture}
\theoremstyle{definition}
\newtheorem{remark}[theorem]{Remark}

\numberwithin{equation}{section}

\newcommand{\bea}{\begin{equation}\begin{aligned}}
\newcommand{\eea}{\end{aligned}\end{equation}}
\renewcommand{\=}{\;=\;}

\def\qqquad{\qquad\quad}

\newcommand{\Z}{\mathbb{Z}}

\makeatletter
\def\imod#1{\allowbreak\mkern5mu({\operator@font mod}\,\,#1)}
\makeatother
\allowdisplaybreaks

\makeatletter
\@namedef{subjclassname@2020}{%
  \textup{2020} Mathematics Subject Classification}
\makeatother

\begin{document}

\title{Bailey pairs, Eichler integrals and unified Witten-Reshetikhin-Turaev invariants}

\author{Jeremy Lovejoy}
\author{Robert Osburn}
\author{Matthias Storzer}

\address{CNRS, Universit{\'e} Paris Cit{\'e}, B{\^a}timent Sophie Germain, Case Courier 7014, 8 Place Aur{\'e}lie Nemours, 75205 Paris Cedex 13, France}

\email{lovejoy@math.cnrs.fr}

\address{School of Mathematical Sciences, University College Cork, Cork, Ireland}

\email{robert.osburn@ucc.ie}
\email{mstorzer@ucc.ie}

\subjclass[2020]{11F27, 33D15, 57K16}
\keywords{Bailey pairs, $q$-multisums, Eichler integrals}

\date{\today}

\begin{abstract}
In 1999, Lawrence and Zagier expressed the Witten-Reshetikhin-Turaev (WRT) invariant of the Poincar{\'e} homology sphere as the limiting value of the Eichler integral of a weight 3/2 modular form.  Habiro's construction of the unified WRT invariant subsequently recast this result as an identity for a $q$-hypergeometric series at roots of unity.  This motivated Hikami to prove analogous $q$-series identities involving the unified WRT invariants of certain Brieskorn homology spheres.  Hikami also made several conjectures of a similar type for $q$-series with no apparent connection to quantum invariants.  In this paper we use the Bailey pair machinery and a novel relation between incomplete quadratic Gauss sums with periodic coefficients to construct infinite families of identities between $q$-multisums at roots of unity and limiting values of Eichler integrals of weight 3/2 modular forms.  These identities include all of Hikami's results and conjectures as well as a generalization of the result of Lawrence and Zagier. 
\end{abstract}

\maketitle

\section{Introduction}
In 1999, Lawrence and Zagier made a prescient observation connecting quantum invariants and modular forms \cite{lz}.   To state their result, let $Z_{\zeta}(\mathcal{M})$ denote the Witten-Reshetikhin-Turaev (WRT) invariant of a closed and oriented 3-manifold $\mathcal{M}$ at the root of unity $\zeta$. Next let $\bm{p} = (p_1, p_2, p_3)$ be a triple of pairwise coprime positive integers and $P:=p_1 p_2 p_3$. For any triple $\bm{\ell} = (\ell_1, \ell_2, \ell_3) \in \mathbb{Z}^3$ satisfying $0 < \ell_j < p_j$, define the odd periodic function 
\begin{eqnarray*}
\chi_{\bm{p}}^{\bm{\ell}}(n) = 
\begin{cases}
-\epsilon_1 \epsilon_2 \epsilon_3 & \text{if $n \equiv P \left( 1 + \displaystyle \sum_{j=1}^{3} \frac{\epsilon_j \ell_j}{p_j} \right) \pmod{2P}$,} \\
0 & \text{otherwise}
\end{cases}
\end{eqnarray*}
where $\bm{\epsilon} = (\epsilon_1, \epsilon_2, \epsilon_3) \in \{ \pm 1 \}^3$. For $q=e^{2 \pi i \tau}$, $\tau \in \mathbb{H}$, consider the Eichler integral
\begin{equation} \label{ei}
\widetilde{\Phi}_{\bm{p}}^{\bm{\ell}}(\tau) := \sum_{n=0}^{\infty} \chi_{\bm{p}}^{\bm{\ell}}(n) q^{\frac{n^2}{4P}}
\end{equation}
of the weight $3/2$ modular form
\begin{equation*}
\Phi_{\bm{p}}^{\bm{\ell}}(\tau) := \sum_{n=0}^{\infty} n \chi_{\bm{p}}^{\bm{\ell}}(n) q^{\frac{n^2}{4P}}.
\end{equation*} 
Their main result \cite[Theorem 1]{lz} reads as follows.   For coprime positive integers $M$ and $N$, let $\zeta_N^M := e^{\frac{2 \pi i M}{N}}$.   Then for the Poincar\'e homology sphere $\Sigma(2,3,5)$ we have   
\begin{equation} \label{LZ0}
1 + \zeta_N^M(1-\zeta_N^M) Z_{\zeta_N^M}(\Sigma(2,3,5)) =  -\frac{1}{2} \lim_{\tau \to \frac{M}{N}} q^{-\frac{1}{120}} \widetilde{\Phi}_{(2,3,5)}^{(1,1,1)} (\tau).
\end{equation}
They also established that the right-hand side of (\ref{LZ0}) has ``a modular property modulo smooth functions", a notion which would later become the defining attribute of Zagier's quantum modular forms \cite{zquantum}.  Any Eichler integral defined by (\ref{ei}) is now known to be an example of a quantum modular form \cite[Section 4.4]{go}. 

Following up on the work of Lawrence and Zagier, Hikami wrote a series of papers \cite{hikami1, hikami2, hikami3, hikami4} wherein he showed that limiting values of Eichler integrals give the WRT invariants for several families of $3$-manifolds, including the Brieskorn homology spheres.  As an application, he used the modularity to find asymptotic expansions for $Z_{\zeta_N}(\mathcal{M})$ as $N \to \infty$.  Hikami's results have recently been used in the study of limiting values of an SU(2) WRT function \cite{am, ahlmss, fimt} and towards progress on a quantum modularity conjecture for WRT invariants \cite[Theorem 1]{ps}. 

The identity \eqref{LZ0} may be recast as an identity for $q$-hypergeometric series at roots of unity using Habiro's unified WRT invariant.  Habiro \cite{Habiro} showed that if $\mathcal{M}$ is an integral homology sphere, there is a $q$-hypergeometric series $I(\mathcal{M};q)$ such that for any root of unity $\zeta$ one has
\begin{equation*}
I(\mathcal{M};\zeta) = Z_{\zeta}(\mathcal{M}).
\end{equation*}   
In the case of the Poincar\'e homology sphere, the unified WRT invariant is \cite{TTQLe}
\begin{equation*}
I(\Sigma(2,3,5);q) = \frac{1}{1-q}\sum_{n \geq 0} q^n(q^{n+1})_{n+1},
\end{equation*}    
where 
\begin{equation*}
(a)_n = (a;q)_n \coloneqq \prod_{k=1}^{n} (1-aq^{k-1}).
\end{equation*}
Thus \eqref{LZ0} says that
\begin{equation} \label{LZ}
H(\zeta_N^M) = -\frac{1}{2} \lim_{\tau \to \frac{M}{N}} q^{-\frac{1}{120}} \widetilde{\Phi}_{(2,3,5)}^{(1,1,1)} (\tau),
\end{equation}
where 
\begin{equation*} \label{Hbase}	
H(q) := \sum_{n \geq 0} q^n (q^n)_n.
\end{equation*}

Explicit expressions for $I(\mathcal{M};q)$ are uncommon as they require knowledge of the cyclotomic expansion of the colored Jones polynomial of the knot $K$ from which $\mathcal{M}$ is constructed using Dehn surgery.  One case where this has been fully carried out is for the Brieskorn homology spheres $\Sigma(2,3, 6p-1)$ and $\Sigma(2,3,6p+1)$, which arise from $(-1)$-surgery and $(+1)$-surgery, respectively, along the twist knot $K_p$, $p>0$ \cite[Theorem C.1]{hikami5}.  Specifically, Hikami \cite{hikami5} showed that one has
\begin{equation} \label{h1id}
(1-q) I(\Sigma(2,3,6p-1);q) = H_p^{(1)}(q) 
\end{equation}
for $p>1$ and 
\begin{equation} \label{h8id}
(1-q) I(\Sigma(2,3,6p+1);q) = H_p^{(8)}(q)
\end{equation} 
for $p \geq 1$ where\footnote{Here and throughout, we follow the notation $H_p^{(i)}(q)$ for the $q$-multisums (\ref{h1}) and (\ref{h2})--(\ref{h5}) as given in \cite{tm}. We have labelled (\ref{h8}), (\ref{h6}), (\ref{h7}), (\ref{h9}) and (\ref{h10}) accordingly.} 
\begin{equation} \label{h1}
	H_p^{(1)}(q) := \sum_{n_p \geq \cdots \geq n_1 \geq 0} q^{n_p} (q^{n_p + 1})_{n_p + 1} \prod_{i=1}^{p-1} q^{n_i (n_i + 1)} \begin{bmatrix}
		n_{i+1} \\ n_i \end{bmatrix}
\end{equation}
and 
\begin{equation} \label{h8}
	H_p^{(8)}(q) := \sum_{n_p \geq \cdots \geq n_1 \geq 0} q^{-n_p(n_p+2)} (q^{n_p + 1})_{n_p + 1} \prod_{i=1}^{p-1} q^{n_i (n_i + 1)} \begin{bmatrix}
		n_{i+1} \\ n_i \end{bmatrix}.
\end{equation}
Here
\begin{equation} \label{qbc}
	\begin{bmatrix} n \\ k \end{bmatrix} := \frac{(q)_n}{(q)_{n-k} (q)_k}
\end{equation}
is the $q$-binomial coefficient.   Combining (\ref{h1id}) and (\ref{h8id}) with expressions for the WRT invariants of the Brieskorn homology spheres in terms of Eichler integrals \cite{hikami1}, Hikami deduced the identities \cite[Theorem 3.3]{hikami5}
\begin{equation} \label{h1p}
H_p^{(1)}(\zeta_N) = (1-\zeta_N) I(\Sigma(2,3, 6p-1); \zeta_N) =  -\frac{1}{2} \lim_{\tau \to \frac{1}{N}} q^{-\frac{(6p+5)^2}{24(6p-1)}} \widetilde{\Phi}_{(2,3,6p-1)}^{(1,1,1)} (\tau)
\end{equation}
for $p > 1$ and \cite[Theorem 2.4 and Proposition 3.2]{hikami5}
\begin{equation} \label{h8p}
H_p^{(8)}(\zeta_N) = (1-\zeta_N) I(\Sigma(2,3, 6p+1); \zeta_N) = -\frac{1}{2}  \lim_{\tau \to \frac{1}{N}} q^{1 - \frac{(6p-5)^2}{24(6p+1)}} \widetilde{\Phi}_{(2,3,6p+1)}^{(1,1,1)} (\tau)
\end{equation}
for $p \geq 1$.

In \cite{hikami5}, Hikami conjectured a number of identities for $q$-series at roots of unity which closely resemble (\ref{LZ}), (\ref{h1p}) and (\ref{h8p}) but for which no interpretation in terms of quantum invariants is known. These conjectures are also presented in \cite{tm} and two more were made in private communication \cite{hikami6}.  To state them, we define the following families of $q$-series.   For $p \geq 1$, let
\begin{equation} \label{h2}
H_p^{(2)}(q) := \sum_{n_p \geq \cdots \geq n_1 \geq 0} q^{n_p} (q^{n_p})_{n_p} \prod_{i=1}^{p-1} q^{n_i^2} \begin{bmatrix}
n_{i+1} \\ n_i \end{bmatrix},
\end{equation}
\begin{equation} \label{h3}
H_p^{(3)}(q) := \sum_{n_p \geq \cdots \geq n_1 \geq 0} q^{2n_p} (q^{n_p + 1})_{n_p} \prod_{i=1}^{p-1} q^{n_i (n_i + 1)} \begin{bmatrix}
n_{i+1} \\ n_i \end{bmatrix},
\end{equation}
\begin{equation} \label{h4}
H_p^{(4)}(q) := \sum_{n_p \geq \cdots \geq n_1 \geq 0} q^{n_p} (q^{n_p + 1})_{n_p} \prod_{i=1}^{p-1} q^{n_i (n_i + 1)} \begin{bmatrix}
n_{i+1} \\ n_i \end{bmatrix},
\end{equation}
\begin{equation} \label{h5}
H_p^{(5)}(q) := \sum_{n_p \geq \cdots \geq n_1 \geq 0} q^{n_p} (q^{n_p + 1})_{n_p} \prod_{i=1}^{p-1} q^{n_i^2} \begin{bmatrix}
n_{i+1} \\ n_i \end{bmatrix},
\end{equation}
\begin{equation} \label{h6}
H_p^{(6)}(q) := \sum_{n_p \geq \cdots \geq n_1 \geq 0} q^{-n_p^2}(q^{n_p+1})_{n_p} \prod_{i=1}^{p-1} q^{n_i^2+n_i}\begin{bmatrix} n_{i+1} \\ n_i \end{bmatrix}
\end{equation}
and
\begin{equation} \label{h7}
H_p^{(7)}(q) := \sum_{n_p \geq \cdots \geq n_1 \geq 0} q^{-n_p^2-n_p}(q^{n_p+1})_{n_p} \prod_{i=1}^{p-1} q^{n_i^2+n_i}\begin{bmatrix} n_{i+1} \\ n_i \end{bmatrix}. 
\end{equation}

In  \cite[(3.23), (3.25b)--(3.25d), (3.35)]{hikami5} and \cite{hikami6}, the following conjectures were made.

\begin{conjecture} \label{hconj} We have
\begin{equation} \label{h2conj}
H_2^{(2)}(\zeta_N) = -\frac{1}{2}  \lim_{\tau \to \frac{1}{N}} q^{-\frac{1}{264}} \widetilde{\Phi}_{(2,3,11)}^{(1,1,2)} (\tau),
\end{equation}

\begin{equation} \label{h3conj}
H_2^{(3)}(\zeta_N) = -\frac{1}{2}  \lim_{\tau \to \frac{1}{N}} q^{-(1 + \frac{49}{264})} \widetilde{\Phi}_{(2,3,11)}^{(1,1,3)} (\tau),
\end{equation}

\begin{equation} \label{h41conj}
H_1^{(4)}(\zeta_N) =  -\frac{1}{2}  \lim_{\tau \to \frac{1}{N}} q^{-\frac{49}{120}} \widetilde{\Phi}_{(2,3,5)}^{(1,1,2)} (\tau),
\end{equation}

\begin{equation} \label{h4conj}
H_2^{(4)}(\zeta_N) = -\frac{1}{2}  \lim_{\tau \to \frac{1}{N}} q^{-\frac{169}{264}} \widetilde{\Phi}_{(2,3,11)}^{(1,1,4)} (\tau),
\end{equation}

\begin{equation} \label{h5pconj}
H_p^{(5)}(\zeta_N) = -\frac{1}{2}  \lim_{\tau \to \frac{1}{N}} q^{\frac{1+48p}{24(1-6p)}} \widetilde{\Phi}_{(2,3,6p-1)}^{(1,1,3p-1)} (\tau)
\end{equation}
where $p \geq 1$,
\begin{equation} \label{h6pconj}
H_1^{(6)}(\zeta_N) = -\frac{1}{2} \lim_{\tau \to \frac{1}{N}} q^{-\frac{25}{168}}  \widetilde{\Phi}_{(2,3,7)}^{(1,1,2)}(\tau) 
\end{equation}
and
\begin{equation} \label{h7pconj}
H_1^{(7)}(\zeta_N) = -\frac{1}{2} \lim_{\tau \to \frac{1}{N}} q^{\frac{47}{168}} \widetilde{\Phi}_{(2,3,7)}^{(1,1,3)}(\tau).
\end{equation}
\end{conjecture}

The purpose of this paper is to use the Bailey pair machinery and a novel relation between incomplete quadratic Gauss sums with periodic coefficients to construct identities between each of the infinite families in (\ref{h2})--(\ref{h7}) evaluated at roots of unity and limiting values of Eichler integrals of weight 3/2 modular forms.  This gives new proofs of (\ref{LZ}), (\ref{h1p}) and (\ref{h8p}) as well as a proof and generalization of Conjecture \ref{hconj}.

The Bailey pair method also leads to the discovery of further identities between $q$-series at roots of unity and Eichler integrals.   We will give two examples which involve the series
\begin{equation} \label{h9}
	H_p^{(9)}(q) := \sum_{n_p \geq \cdots \geq n_1 \geq 0} q^{-n_p^2-n_p}(q^{n_p+1})_{n_p} \prod_{i=1}^{p-1} q^{n_i^2} \begin{bmatrix} n_{i+1} \\ n_i \end{bmatrix} 
\end{equation} 
and
\begin{equation} \label{h10}
	H_p^{(10)}(q) := \sum_{n_p \geq \cdots \geq n_1 \geq 0} q^{-n_p^2}(q^{n_p})_{n_p} \prod_{i=1}^{p-1} q^{n_i^2} \begin{bmatrix} n_{i+1} \\ n_i \end{bmatrix},
\end{equation}
defined for $p \geq 1$.   We now state our main result.

\begin{theorem} \label{main} Let $M$ and $N$ be coprime positive integers. For $p \geq 2$, we have
\begin{equation} \label{main1eq}
H_p^{(1)}(\zeta_N^M) = -\frac{1}{2}  \lim_{\tau \to \frac{M}{N}} q^{-\frac{(6p+5)^2}{24(6p-1)}}\widetilde{\Phi}_{(2,3,6p-1)}^{(1,1,1)} (\tau)
\end{equation}
while for $p=1$
\begin{equation} \label{main11eq}
H_1^{(1)}(\zeta_N^M) = -\zeta_N^{-M} - \frac{1}{2}  \lim_{\tau \to \frac{M}{N}} q^{-\frac{121}{120} }\widetilde{\Phi}_{(2,3,5)}^{(1,1,1)} (\tau).
\end{equation}  
For $p \geq 1$, we have
\begin{equation} \label{main2eq}
H_p^{(2)}(\zeta_N^M) = -\frac{1}{2}  \lim_{\tau \to \frac{M}{N}} q^{-\frac{1}{24(6p-1)}} \widetilde{\Phi}_{(2,3,6p-1)}^{(1,1,p)} (\tau),
\end{equation}
\begin{equation} \label{main3eq}
H_p^{(3)}(\zeta_N^M) = -\frac{1}{2}  \lim_{\tau \to \frac{M}{N}} q^{-1 - \frac{(6p-5)^2}{24(6p-1)}} \widetilde{\Phi}_{(2,3,6p-1)}^{(1,1,2p-1)} (\tau),
\end{equation}
\begin{equation} \label{main4eq}
H_p^{(4)}(\zeta_N^M) = -\frac{1}{2}  \lim_{\tau \to \frac{M}{N}} q^{-\frac{(6p+1)^2}{24(6p-1)}} \widetilde{\Phi}_{(2,3,6p-1)}^{(1,1,2p)} (\tau),
\end{equation}
\begin{equation} \label{main5eq}
H_p^{(5)}(\zeta_N^M) = -\frac{1}{2}  \lim_{\tau \to \frac{M}{N}} q^{p-1 - \frac{(12p-5)^2}{24(6p-1)}} \widetilde{\Phi}_{(2,3,6p-1)}^{(1,1,3p-1)} (\tau),
\end{equation}
\begin{equation} \label{main6}
H_p^{(6)}(\zeta_N^M) = -\frac{1}{2}  \lim_{\tau \to \frac{M}{N}} q^{-\frac{(6p-1)^2}{24(6p+1)}} \widetilde{\Phi}_{(2,3,6p+1)}^{(1,1,2p)} (\tau),
\end{equation}
\begin{equation} \label{main7eq}
H_p^{(7)}(\zeta_N^M) = -\frac{1}{2}  \lim_{\tau \to \frac{M}{N}} q^{1 - \frac{(6p+5)^2}{24(6p+1)}} \widetilde{\Phi}_{(2,3,6p+1)}^{(1,1,2p+1)} (\tau),
\end{equation}
\begin{equation} \label{main8eq}
H_p^{(8)}(\zeta_N^M) = -\frac{1}{2}  \lim_{\tau \to \frac{M}{N}} q^{1 - \frac{(6p-5)^2}{24(6p+1)}} \widetilde{\Phi}_{(2,3,6p+1)}^{(1,1,1)} (\tau),
\end{equation}
\begin{equation} \label{main9eq}
H_p^{(9)}(\zeta_N^M) = -\frac{1}{2}  \lim_{\tau \to \frac{M}{N}} q^{p-1 - \frac{(12p-1)^2}{24(6p+1)}} \widetilde{\Phi}_{(2,3,6p+1)}^{(1,1,3p)} (\tau)
\end{equation}
and
\begin{equation} \label{main10eq}
H_p^{(10)}(\zeta_N^M) =  -\frac{1}{2}  \lim_{\tau \to \frac{M}{N}} q^{-\frac{1}{24(6p+1)}} \widetilde{\Phi}_{(2,3,6p+1)}^{(1,1,p)} (\tau).
\end{equation}
\end{theorem}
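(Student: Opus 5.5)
The approach is to turn each multisum $H_p^{(i)}(q)$ into a Hecke-type (indefinite or partial theta) double sum via Bailey pairs. We then evaluate that double sum at $q=\zeta_N^M$ as a finite sum, and match the result with the known finite-sum formula for the radial limit of the Eichler integral $\widetilde{\Phi}_{\bm p}^{\bm\ell}$.

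\textbf{Step 1: reduction to a single sum via Bailey pairs.} Each $H_p^{(i)}$ has the shape $\sum_{n} f(n)\,\beta$-type inner sums $\prod q^{n_i^2(+n_i)}\begin{bmatrix} n_{i+1}\\ n_i\end{bmatrix}$. These inner products are exactly what arises from iterating the Bailey lemma $p-1$ times (with $a=1$ or $a=q$, and both parameters $\to\infty$) starting from a seed pair.

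Concretely, I would start from a pair $(\alpha_n,\beta_n)$ relative to $a=1$ or $a=q$ in which $\beta_n$ absorbs the factor $(q^{n+1})_n$ or $(q^n)_n$, i.e. a pair with $\alpha_n$ a short sum of signed powers of $q$ (the standard pairs coming from $q$-analogues of the $\binom{2n}{n}$ expansions). Iterating gives a new pair $(\alpha_n',\beta_n')$ with $\alpha'_n = q^{(p-1)n^2}$-type multiples of $\alpha_n$, and $\beta_n'$ equal to the multisum kernel.

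We then pair this with the outer weight $q^{n_p}$, $q^{2n_p}$, $q^{-n_p^2}$, etc. At a root of unity this is best done with a Bailey-type identity for sums $\sum_n q^{cn}(x)_n\cdots\beta_n$ that terminate naturally. Following the Andrews/Zagier "strange identity" strategy, I would write $H_p^{(i)}(q)$ formally (or as an identity valid inside and outside the unit disk, \`a la Hikami's sums of tails) as
\[
H_p^{(i)}(q) \= \sum_{n\ge0}\big(\text{polynomial part}\big) \;-\; \tfrac12\sum_{n\ge0}\chi(n)\,q^{(n^2-c)/(4P)},
\]
with a periodic sign function $\chi$ supported on residues modulo $2P$, $P=6(6p\mp1)$. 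Here the exponent $(p-1)n^2$ from iteration combines with the seed's quadratic form to produce the modulus $6p\mp1$. Matching $\chi$ with $\chi_{(2,3,6p\mp1)}^{(1,1,\ell)}$ is bookkeeping of residues, and fixes $\ell$ and the prefactor power of $q$. The exceptional term $-\zeta_N^{-M}$ for $p=1$ in \eqref{main11eq} should arise as a boundary term when the iteration is empty.

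\textbf{Step 2: evaluation at roots of unity.} At $q=\zeta_N^M$ the multisum truncates, since $(q^{n+1})_n$ vanishes for $n\ge N$. Its value becomes a finite sum that, after the Bailey manipulations, is an incomplete quadratic Gauss sum of the form $\sum_{n=0}^{2PN-1}\chi'(n)\,\zeta^{n^2/(4P)}\,B(n/2PN)$ with a Bernoulli-polynomial weight, or alternatively a sum over a shorter range of $n$ with periodic coefficients.

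On the other side, the standard asymptotic lemma for Eichler integrals gives
\[
\lim_{\tau\to M/N}\widetilde{\Phi}(\tau) \= -\sum_{n=1}^{2PN}\chi(n)\,\zeta_{4PN}^{Mn^2}\,B_1\!\left(\tfrac{n}{2PN}\right),
\]
valid because $\chi$ has mean zero.

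\textbf{Step 3, the main obstacle: matching the two finite sums.} The truncated multisum naturally produces an incomplete Gauss sum over a range like $0\le n< N$ with one periodic coefficient. The Eichler-integral limit has a different range and a different periodic coefficient, namely $\chi_{\bm p}^{\bm\ell}$ with its $\pm\epsilon$ structure. This is where the paper's "novel relation between incomplete quadratic Gauss sums with periodic coefficients" is needed.

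I would try to prove such a relation directly. Split $n$ by residues modulo $2P$, use that $n\mapsto n+2PN$ and $n\mapsto -n$ preserve $\zeta^{n^2}$, and exploit the oddness of $\chi$ to convert the $B_1$-weighted full-period sum into a partial sum over an initial segment. Reconciling the period-$N$ truncation with the period-$2P$ coefficients in general $M,N$ (not just $M=1$) is the delicate step. I expect to handle it by a reflection argument specific to the three residues $\pm1$ modulo $2$, $3$ and $6p\mp1$.
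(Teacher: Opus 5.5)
Your outline has the right architecture (Bailey pairs, then a finite sum at $q=\zeta_N^M$, then comparison with the finite formula for $\lim_{\tau\to M/N}\widetilde{\Phi}_{\bm p}^{\bm\ell}$, which you state correctly). However, neither substantive step is actually supplied, and the one concrete construction in Step 1 does not fit the multisums. In $H_p^{(i)}$ the factor $(q^{n_p+1})_{n_p}$ (or $(q^{n_p})_{n_p}$) and the outer weight sit on the \emph{largest} index $n_p$. In an iterated Bailey chain the seed $\beta$ sits on the \emph{smallest} index $n_1$. So a seed whose $\beta_n$ ``absorbs'' $(q^{n+1})_n$ does not produce $H_p^{(i)}$. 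The decomposition ``polynomial part $-\frac12\sum\chi(n)q^{(n^2-c)/(4P)}$'' is also never derived; for $H^{(6)},\dots,H^{(10)}$, with outer weights like $q^{-n_p^2}$, there is no convergent series on either side of the unit circle to expand.

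The missing idea is a reversal that only works at roots of unity. Iterate the Bailey lemma $p$ times from a seed such as $\beta_k=1/(q^2)_{2k}$. Write the Bailey relation for the iterated pair at index $N-1$ in the form involving $(q^{1-N})_k/(q^{1+N})_k$, and substitute $n_i\mapsto N-1-n_{p-i+1}$. The seed then appears as $\beta_{N-1-n_p}=(q^{1-2N})_{2n_p}q^{4Nn_p-2n_p^2-n_p}/(q^2)_{2N-2}$. Clear $(q^2)_{2N-2}$ (its ratio to $(q)_{N-1}(q^2)_{N-1}$ is $1$ at such $q$) and specialize to a primitive $N$th root of unity. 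This yields exactly $(q^{n_p+1})_{n_p}$ on the outer index, the exponents $(N-n_i-1)(N-n_i)$ reduce to $n_i^2+n_i$, and $(q^{1-N})_k/(q^{1+N})_k\to1$. That is how one gets $H_p^{(i)}(\zeta_N^M)=\sum_{k=0}^{N-1}(-1)^kq^{\binom{k+1}{2}+(p-1)(k^2+k)}\alpha_k$, with Bailey-lattice and $\frac12$-boundary variants for seeds relative to $q^2$. It also determines which seed pairs are needed, two of which the paper has to construct.

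The second gap is Step 3, which you rightly flag as the crux but only describe as an expected ``reflection argument''. Two specific statements are needed. First, a polynomial identity in $q$, proved by induction on $N$ by matching the new term $k=N$ against the block $n\in(2p_3N,2p_3(N+1)]$. It shows that the $\alpha$-sum equals $-\sum_{n=0}^{2p_3N}\chi(n)q^{(n^2-c)/(4P)}$, a partial theta sum cut off at one third of the range $PN=6p_3N$ in the Eichler limit. Second, the relation $\sum_{n=0}^{6p_3N}\chi(n)q^{n^2/(4P)}=4\sum_{n=0}^{2p_3N}\chi(n)q^{n^2/(4P)}$ at $q=\zeta_N^M$. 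Proving it requires splitting into residue-class pieces $T_{\bm\epsilon}$ on $[0,2p_3N]$ and $S_{\bm\epsilon}$ on $(2p_3N,4p_3N]$. One then proves relations $T_{\bm\epsilon}=c\,T_{\bm\epsilon'}$, $S_{\bm\epsilon}=c\,T_{\bm\epsilon'}$, $S_{\bm\epsilon}=c\,S_{\bm\epsilon'}$ with explicit roots of unity $c$, under congruence conditions on $\bm\epsilon,\bm\epsilon'$ depending on $N\bmod 6$, via substitutions like $k\mapsto\lfloor N/6-m(\bm\epsilon)/(2P)\rfloor-k$. Finally one checks that the coefficients cancel, e.g.\ through $2-2e^{\pi ip_3M/3}-e^{-\pi ip_3M/3}+e^{2\pi ip_3M/3}=0$ when $6\mid N$.

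Your remarks about $n\mapsto -n$ and $n\mapsto n+2PN$ only reach the final step, trading the weight $1-\frac{n}{PN}$ for $\frac12$. Even that needs the half-period reflection $n\mapsto PN-n$, which works because $P\equiv2\pmod 4$, $\chi$ is supported on odd $n$, and $\chi(P-n)=\chi(n)$. Also, the $-\zeta_N^{-M}$ for $p=1$ is not an ``empty iteration'' effect. For $p\ge2$ the residue $6p-7$ of $\chi_{(2,3,6p-1)}^{(1,1,1)}$ contributes $+q^{-1}$; at $p=1$ this residue becomes $-1$, so $n=1$ contributes $-q^{-1}$ instead. The polynomial identity therefore acquires a correction term, which at $q=\zeta_N^M$ contributes exactly $-\zeta_N^{-M}$.
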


\begin{remark}
Observe that the $p=1$ case of (\ref{main2eq}) recovers (\ref{LZ}) while the $M=1$ cases of (\ref{main1eq}) and (\ref{main8eq}) yield (\ref{h1p}) and (\ref{h8p}), respectively. We have also included (\ref{main11eq}) for completeness (cf. \cite[(3.8), (3.10)]{hikami5}). Note that Theorem \ref{main} embeds Hikami's conjectures for one single value of $p$ (with the exception of (\ref{h5pconj})) into an infinite family of identities. Finally, our main result and \cite[Section 4.4]{go} imply that each $H_p^{(i)}(q)$ is a quantum modular form.
\end{remark}

The paper is organized as follows. In Section 2, we recall the necessary background on the Bailey pair machinery and express all of the series $H_p^{(i)}(q)$ as finite polynomial sums at roots of unity.  See Theorems \ref{Baileyfamily1}--\ref{Baileyfamily10}.   An elucidatory principle in this section arises from the broader perspective of quantum $q$-series identities \cite{love1, love2}, a recent development whose applications include new proofs for identities at roots of unity for Ramanujan's $\sigma$-function and its companion $\sigma^{*}(q)$ \cite{ADH, Cohen}, a ``duality" result for the Kontsevich-Zagier strange function \cite{bopr, fkvy} and Zagier's strange identity and Hikami's generalization thereof \cite{hikamiRAMA, z}. For further work in this direction, see \cite{dl, fm, f, ls}. In Section 3, we prove key properties for and relations between incomplete quadratic Gauss sums. In Section 4, we convert the finite polynomial sums in Section 2 to limiting values of Eichler integrals, thereby proving Theorem \ref{main}.

\section{Bailey pairs and key identities}

\subsection{Preliminaries and auxiliary results}

Recall that a Bailey pair relative to $a$ is a pair of sequences $(\alpha_n,\beta_n)_{n \geq 0}$ satisfying
\begin{align} 
\beta_n &= \sum_{k=0}^n \frac{\alpha_k}{(q)_{n-k}(aq)_{n+k}} \label{pairdef} \\
&= \frac{1}{(q)_n(aq)_n} \sum_{k=0}^n \frac{(q^{-n})_k}{(aq^{n+1})_k}(-1)^kq^{nk -\binom{k}{2}} \alpha_k. \label{pairdefbis}
\end{align}
The first equation is the original definition \cite{An1}, while the second follows from an application of the identity \cite[Appendix I, Eq. (I.10)]{Ga-Ra}
\begin{equation} \label{qPochn-k}
(x)_{n-k} = \frac{(x)_n}{(q^{1-n}/x)_k}(-q/x)^kq^{\binom{k}{2} - nk}.
\end{equation} 

We begin by reviewing two important facts about Bailey pairs.   First, we require a special case of the Bailey lemma \cite{An1}.
\begin{lemma} \label{Baileylemma}
If $(\alpha_n,\beta_n)$ is a Bailey pair relative to $a$, then so is $(\alpha_n',\beta_n')$, where
\begin{equation} \label{alphaprime}
\alpha_n' = a^nq^{n^2}\alpha_n 
\end{equation}
and
\begin{equation} \label{betaprime}
\beta_n' = \sum_{j=0}^n \frac{a^jq^{j^2}}{(q)_{n-j}} \beta_j.
\end{equation}
\end{lemma}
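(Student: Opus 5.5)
The plan is to verify the defining relation \eqref{pairdef} for $(\alpha_n',\beta_n')$ directly, by substituting the definition of $\beta_j$ into \eqref{betaprime} and interchanging the order of summation. Substituting $\beta_j=\sum_{k\le j}\alpha_k/((q)_{j-k}(aq)_{j+k})$ into \eqref{betaprime} and swapping sums gives
\begin{equation*}
\beta_n' = \sum_{k=0}^n \alpha_k \sum_{j=k}^n \frac{a^jq^{j^2}}{(q)_{n-j}(q)_{j-k}(aq)_{j+k}}.
\end{equation*}
So the whole statement reduces to the inner-sum identity
\begin{equation*}
\sum_{j=k}^n \frac{a^jq^{j^2}}{(q)_{n-j}(q)_{j-k}(aq)_{j+k}} = \frac{a^kq^{k^2}}{(q)_{n-k}(aq)_{n+k}},
\end{equation*}
since then $\beta_n'=\sum_k \alpha_k'/((q)_{n-k}(aq)_{n+k})$, which is exactly \eqref{pairdef} for the new pair.

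To prove the inner identity, shift $j=k+m$ and put $L=n-k$. After pulling out $a^kq^{k^2}/(aq)_{2k}$, and using $(aq)_{2k+m}=(aq)_{2k}(aq^{2k+1})_m$, it becomes, with $b=aq^{2k}$,
\begin{equation*}
\sum_{m=0}^{L}\frac{b^mq^{m^2}}{(q)_{L-m}(q)_m(bq)_m}=\frac{1}{(q)_L(bq)_L}.
\end{equation*}
The exponent bookkeeping behind this step is $a^{k+m}q^{(k+m)^2}=a^kq^{k^2}\,(aq^{2k})^mq^{m^2}$. Now multiply through by $(q)_L$ and rewrite $1/(q)_{L-m}$ using \eqref{qPochn-k} with $x=q$, so that $(q)_L/(q)_{L-m}=(q^{-L})_m(-1)^mq^{Lm-\binom{m}{2}}$. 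The identity becomes a terminating ${}_2\phi_1$-type sum
\begin{equation*}
\sum_{m=0}^{L}\frac{(q^{-L})_m}{(q)_m(bq)_m}(-1)^m q^{\binom{m}{2}}\,(bq^{L+1})^m=\frac{1}{(bq)_L}.
\end{equation*}
This is the $c\to\infty$ limiting case of the $q$-Chu–Vandermonde sum \cite[(II.7)]{Ga-Ra}, or equivalently a specialization of the $q$-Pfaff–Saalschütz sum. In particular it is a standard summation, and I will simply cite it.

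The main obstacle is only the bookkeeping: matching exponents and powers of $b$ correctly in the reduction to the ${}_2\phi_1$ sum. As an independent check, one can verify the final identity by induction on $L$ via the recurrence $1/(q)_{L-m}$ splitting.
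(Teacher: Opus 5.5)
Your proof is correct and is the standard argument for this case of Bailey's lemma. The paper gives no proof of its own and only cites Andrews \cite{An1}, where the inner sum is evaluated by the $q$-Pfaff--Saalsch\"utz sum; letting two numerator parameters tend to $\infty$ there gives exactly your terminating ${}_1\phi_1$-type identity. One small correction to your citation: that identity is the $a\to\infty$ limit of the form ${}_2\phi_1(a,q^{-L};c;q,cq^{L}/a)=(c/a)_L/(c)_L$ of $q$-Chu--Vandermonde (with $c=bq$), not the $c\to\infty$ limit of the form with argument $q$, which degenerates to $1=1$.
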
 
Second, we have a special case of the Bailey lattice \cite{Ag-An-Br1}.
\begin{lemma} \label{Baileylattice}
If $(\alpha_n,\beta_n)$ is a Bailey pair relative to $q^2$, then $(\alpha_n^*,\beta_n)$ is a Bailey pair relative to $q$, where
\begin{equation} \label{alphastar}
\alpha_n^* = (1-q^2)\left(\frac{\alpha_n}{1-q^{2n+2}} - \frac{q^{2n}\alpha_{n-1}}{1-q^{2n}} \right).
\end{equation} 
By convention we take $\alpha_{-1} = 0 $.
\end{lemma}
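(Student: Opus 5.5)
The statement "immediately above" is Lemma~\ref{Baileylattice}, a special case of the Bailey lattice: if $(\alpha_n,\beta_n)$ is a Bailey pair relative to $q^2$, then $(\alpha_n^*,\beta_n)$ is a Bailey pair relative to $q$. The plan is to verify the defining relation \eqref{pairdef} with $a=q$ directly. We show that for every $n\ge 0$
\begin{equation*}
\sum_{k=0}^n \frac{\alpha_k^*}{(q)_{n-k}(q^2)_{n+k}} \= \sum_{k=0}^n \frac{\alpha_k}{(q)_{n-k}(q^3)_{n+k}},
\end{equation*}
since the right-hand side equals $\beta_n$ by hypothesis. Both sides are linear in the sequence $(\alpha_k)$, and $\alpha$ may be arbitrary (any $\alpha$ determines a $\beta$). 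It therefore suffices to compare the coefficient of each $\alpha_k$, $0\le k\le n$, on the two sides.

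First substitute \eqref{alphastar} and reindex the second term by $k\mapsto k+1$. The coefficient of $\alpha_k$ on the left is then
\begin{equation*}
\frac{1-q^2}{1-q^{2k+2}}\left(\frac{1}{(q)_{n-k}(q^2)_{n+k}} - \frac{q^{2k+2}}{(q)_{n-k-1}(q^2)_{n+k+1}}\right).
\end{equation*}
For $k=n$ the second term is absent, since $1/(q)_{-1}=0$. This matches the convention $\alpha_{-1}=0$, which removes the boundary term at $k=0$ in the original sum.

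Next simplify the bracket. Factor out $1/\bigl((q)_{n-k}(q^2)_{n+k+1}\bigr)$; what remains is
\begin{equation*}
(1-q^{n+k+2}) - q^{2k+2}(1-q^{n-k}) \= 1-q^{2k+2}.
\end{equation*}
This cancels the prefactor's denominator, so the coefficient becomes
\begin{equation*}
\frac{1-q^2}{(q)_{n-k}(q^2)_{n+k+1}} \= \frac{1}{(q)_{n-k}(q^3)_{n+k}},
\end{equation*}
using $(q^2)_{m+1}=(1-q^2)(q^3)_m$. This is exactly the coefficient of $\alpha_k$ on the right, which proves the lemma.

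The main obstacle is the bookkeeping of the shifted index and the boundary terms at $k=0$ and $k=n$. The factorization identity itself is elementary. As an alternative, one could derive the lemma as a specialization of the general Bailey lattice of \cite{Ag-An-Br1}, but the direct coefficient comparison is self-contained and short.
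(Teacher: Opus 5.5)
Your proof is correct. After substituting \eqref{alphastar} into \eqref{pairdef} with $a=q$ and shifting the $\alpha_{k-1}$ part, the coefficient of $\alpha_k$ collapses to $1/\bigl((q)_{n-k}(q^3)_{n+k}\bigr)$, using $(1-q^{n+k+2})-q^{2k+2}(1-q^{n-k})=1-q^{2k+2}$ and $(q^2)_{m+1}=(1-q^2)(q^3)_m$. Both endpoints are handled properly: $1/(q)_{-1}=0$ at $k=n$, and $\alpha_{-1}=0$ disposes of the formally indeterminate term $q^{0}\alpha_{-1}/(1-q^{0})$ in $\alpha_0^*$.

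The paper's route differs only in that it gives no argument at all. It quotes the lemma as a special case of the Bailey lattice of Agarwal, Andrews and Bressoud \cite{Ag-An-Br1}, namely the specialization in which the lattice acts trivially on $\beta_n$. The citation places the lemma in a parametrized family that can be interleaved with Bailey-chain steps, which is how such lattices are usually exploited. Your computation instead makes the paper self-contained at this point and shows that the lemma is just a two-term telescoping of the kernel.

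It is also no harder in general. The same steps, with $q^2$ replaced by an arbitrary $a$ and the identity $(1-aq^{n+k})-aq^{2k}(1-q^{n-k})=1-aq^{2k}$, show the following: if $(\alpha_n,\beta_n)$ is a Bailey pair relative to $a$, then $(\alpha_n',\beta_n)$ is one relative to $a/q$, where $\alpha_n'=(1-a)\bigl(\alpha_n/(1-aq^{2n})-aq^{2n-2}\alpha_{n-1}/(1-aq^{2n-2})\bigr)$.

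One cosmetic point: your remark that $\alpha$ ``may be arbitrary'' is not needed. Matching the coefficient of each $\alpha_k$ already suffices for any fixed sequence.
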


Using Lemmas \ref{Baileylemma} and \ref{Baileylattice}, we establish three auxiliary results which will allow us to relate the $q$-multisums in Theorem \ref{main} to polynomial sums at roots of unity.  The first result is for Bailey pairs relative to $q$.

\begin{proposition} \label{keychainlemma}
If $(\alpha_k,\beta_k)$ is a Bailey pair relative to $q$, then for any $p \geq 1$ we have
\begin{equation} \label{keychainlemmaeq}
\begin{aligned}
\sum_{n-1 \geq n_p \geq \cdots \geq n_1\geq 0}& \frac{q^{(n-n_p-1)(n-n_p)}\beta_{n-n_p-1}}{(q)_{n_p}} \prod_{i=1}^{p-1} q^{(n-n_i-1)(n-n_i)} \begin{bmatrix} n_{i+1} \\ n_i \end{bmatrix} \\
&= \frac{1}{(q)_{n-1}(q^2)_{n-1}} \sum_{k=0}^{n-1} \frac{(q^{1-n})_k}{(q^{1+n})_k}(-1)^kq^{nk -\binom{k+1}{2} + p(k^2+k)}\alpha_k.
\end{aligned}
\end{equation}
\end{proposition}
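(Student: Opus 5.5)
The plan is to iterate the Bailey lemma (Lemma \ref{Baileylemma}) with $a=q$ a total of $p$ times, and then evaluate the resulting $\beta$ at index $n-1$ using \eqref{pairdefbis}. Starting from $(\alpha_k,\beta_k)$ relative to $q$, one application gives the pair $\alpha_k^{(1)} = q^{k^2+k}\alpha_k$ and $\beta_m^{(1)} = \sum_{j=0}^m \frac{q^{j^2+j}}{(q)_{m-j}}\beta_j$. After $p$ applications we obtain $\alpha^{(p)}_k = q^{p(k^2+k)}\alpha_k$ together with a $p$-fold sum for $\beta^{(p)}_m$:
\begin{equation*}
\beta^{(p)}_m = \sum_{m \geq j_p \geq \cdots \geq j_1 \geq 0} \frac{q^{j_p^2+j_p}}{(q)_{m-j_p}} \frac{q^{j_{p-1}^2+j_{p-1}}}{(q)_{j_p-j_{p-1}}}\cdots \frac{q^{j_1^2+j_1}}{(q)_{j_2-j_1}}\,\beta_{j_1}.
\end{equation*}
Here the nesting order is chosen so that the innermost application sees $\beta$ directly.

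Next, take $m=n-1$ and apply \eqref{pairdefbis} with $a=q$ to the pair $(\alpha^{(p)},\beta^{(p)})$. Since $(q)_{n-1}(q^2)_{n-1}$ appears in the denominator, $(q^{-(n-1)})_k = (q^{1-n})_k$, $(aq^{n})_k=(q^{1+n})_k$, and $q^{(n-1)k-\binom{k}{2}} = q^{nk-\binom{k+1}{2}}$, this yields exactly the right-hand side of \eqref{keychainlemmaeq}. So it remains to show that $\beta^{(p)}_{n-1}$ equals the left-hand side.

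For that I would reindex the left side by $j_i = n-1-n_i$, with the reversed order $n-1\ge j_1\ge\cdots\ge j_p\ge 0$ under the left side's labeling. Then $q^{(n-n_i-1)(n-n_i)} = q^{j_i^2+j_i}$, and $\beta_{n-n_p-1}=\beta_{j_p}$. The factors $1/(q)_{n_p}$ and $\begin{bmatrix} n_{i+1}\\ n_i\end{bmatrix}$ must combine into the product of $1/(q)$-differences. Writing $\begin{bmatrix} n_{i+1}\\ n_i\end{bmatrix} = \frac{(q)_{n_{i+1}}}{(q)_{n_i}(q)_{n_{i+1}-n_i}}$, the product over $i$ telescopes to
\begin{equation*}
\frac{(q)_{n_p}}{(q)_{n_1}}\prod_{i=1}^{p-1}\frac{1}{(q)_{n_{i+1}-n_i}}.
\end{equation*}
Together with $1/(q)_{n_p}$ this gives $\frac{1}{(q)_{n_1}}\prod_i \frac{1}{(q)_{n_{i+1}-n_i}}$. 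In the $j$ variables, $(q)_{n_1} = (q)_{n-1-j_1}$ and $n_{i+1}-n_i = j_i - j_{i+1}$, which is precisely the structure of $\beta^{(p)}_{n-1}$ after relabeling $j_i \leftrightarrow j_{p+1-i}$.

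The only real obstacle is bookkeeping: making sure that the ordering of the iterated Bailey lemma matches the chain in the statement, and that the extra factor $q^{(n-n_p-1)(n-n_p)}$ attached to $\beta$ is the one coming from the innermost application. I would verify this by checking $p=1$ directly. In that case the left side is $\sum_{n_1}\frac{q^{(n-n_1-1)(n-n_1)}}{(q)_{n_1}}\beta_{n-1-n_1} = \beta^{(1)}_{n-1}$, and the general case then follows by induction on $p$.
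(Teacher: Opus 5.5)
Your proposal is correct and is essentially the paper's proof. You iterate Lemma \ref{Baileylemma} with $a=q$ a total of $p$ times to get $\alpha_k^{(p)}=q^{p(k^2+k)}\alpha_k$, read off the right-hand side from \eqref{pairdefbis} at index $n-1$, and match the left-hand side with $\beta^{(p)}_{n-1}$ by reflecting and reversing the indices (the paper does both at once via $n_i\to n-1-n_{p-i+1}$) and telescoping the $q$-binomial product against $1/(q)_{n_p}$. The closing induction on $p$ is unnecessary, since your direct computation already covers every $p$.
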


\begin{proof}
	Suppose that $(\alpha_k,\beta_k)$ is a Bailey pair relative to $q$.   Iterating \eqref{alphaprime} and \eqref{betaprime} $p$ times we obtain another Bailey pair relative to $q$,
	\begin{equation} \label{alpha^p}
		\alpha_k^{(p)} = q^{p(k^2+k)}\alpha_k 
	\end{equation}
	and
	\begin{equation}
		\beta_k^{(p)} = \sum_{k \geq n_p \geq \cdots \geq n_1\geq 0} \frac{q^{n_p(n_p+1) + \cdots + n_1(n_1+1)}}{(q)_{k-n_p} \cdots (q)_{n_2-n_1}} \beta_{n_1}.
	\end{equation}
	Now in $\beta_k^{(p)}$ we replace $k$ by $n-1$ and then make the change of indices $n_i \rightarrow n - 1 - n_{p-i+1}$ for $1 \leq i \leq p$.   The result is
	\begin{equation}
		\beta_{n-1}^{(p)} = \sum_{n-1 \geq n_p \geq \cdots \geq n_1\geq 0} \frac{q^{(n - n_1 - 1)(n- n_1) + \cdots + (n-n_p-1)(n-n_p)}}{(q)_{n_1} (q)_{n_2-n_1} \cdots (q)_{n_p-n_{p-1}}} \beta_{n -n_p -1}.
	\end{equation}
	Multiplying numerator and denominator by $(q)_{n_p}$ and then converting to $q$-binomial coefficients (\ref{qbc}) gives the left-hand side of \eqref{keychainlemmaeq}.   The right-hand side follows from inserting \eqref{alpha^p} into \eqref{pairdefbis}.    This completes the proof.
\end{proof}

Our second auxiliary result is analogous to Proposition \ref{keychainlemma}, but for Bailey pairs relative to $q^2$.  In what follows, we define $a_n^*$ for any sequence $a_n$ by
\begin{equation} \label{astar}
	a_n^* = (1-q^2)\left(\frac{a_n}{1-q^{2n+2}} - \frac{q^{2n}a_{n-1}}{1-q^{2n}} \right)
\end{equation}  
in analogy with \eqref{alphastar}. 
 
\begin{proposition} \label{keychainlemmabis}
If $(\alpha_k,\beta_k)$ is a Bailey pair relative to $q^2$, then for any $p \geq 1$ we have
\begin{equation} \label{keychainlemmabiseq}
\begin{aligned}
\sum_{n-1 \geq n_p \geq \cdots \geq n_1\geq 0}& \frac{q^{(n-n_p-1)(n-n_p+1)}\beta_{n-n_p-1}}{(q)_{n_p}} \prod_{i=1}^{p-1} q^{(n-n_i-1)(n-n_i+1)} \begin{bmatrix} n_{i+1} \\ n_i \end{bmatrix} \\
&= \frac{1}{(q)_{n-1}(q^2)_{n-1}} \sum_{k=0}^{n-1} \frac{(q^{1-n})_k}{(q^{1+n})_k}(-1)^kq^{nk -\binom{k+1}{2}} (q^{p(k^2+2k)} \alpha_k)^*.
\end{aligned}
\end{equation} 
\end{proposition}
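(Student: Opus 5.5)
The plan mirrors the proof of Proposition \ref{keychainlemma}, with one change: we apply Lemma \ref{Baileylattice} \emph{after} iterating the Bailey lemma, not before.

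\emph{Iterating with $a=q^2$.} Start from a Bailey pair $(\alpha_k,\beta_k)$ relative to $q^2$. Apply Lemma \ref{Baileylemma} $p$ times with $a=q^2$. Since $a^jq^{j^2}=q^{j^2+2j}$, this gives a new Bailey pair relative to $q^2$:
\[
\alpha_k^{(p)} = q^{p(k^2+2k)}\alpha_k,
\qquad
\beta_k^{(p)} = \sum_{k \geq n_p \geq \cdots \geq n_1 \geq 0} \frac{q^{n_p(n_p+2)+\cdots+n_1(n_1+2)}}{(q)_{k-n_p}\cdots(q)_{n_2-n_1}}\,\beta_{n_1}.
\]

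\emph{Passing to $a=q$.} Apply Lemma \ref{Baileylattice} to this pair. It says that $\big((q^{p(k^2+2k)}\alpha_k)^*,\beta_k^{(p)}\big)$ is a Bailey pair relative to $q$, where $^*$ is the operation \eqref{astar}. The $\beta$ side is unchanged, and this is exactly why the starred quantity on the right of \eqref{keychainlemmabiseq} is the whole product $q^{p(k^2+2k)}\alpha_k$ rather than $\alpha_k$ alone.

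\emph{The two sides of \eqref{keychainlemmabiseq}.} For the right-hand side, insert the pair relative to $q$ into \eqref{pairdefbis} with $a=q$ and $n$ replaced by $n-1$. Then $(q)_{n-1}(aq)_{n-1}=(q)_{n-1}(q^2)_{n-1}$ and $(aq^{n})_k=(q^{1+n})_k$. The exponent becomes $(n-1)k-\binom{k}{2}=nk-\binom{k+1}{2}$. This matches the right-hand side exactly.

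For the left-hand side, set $k=n-1$ in $\beta^{(p)}_k$ and reindex by $n_i\to n-1-n_{p-i+1}$ for $1\le i\le p$. Then each factor $q^{n_j(n_j+2)}$ becomes $q^{(n-n_i-1)(n-n_i+1)}$. The denominators become $(q)_{n_1}(q)_{n_2-n_1}\cdots(q)_{n_p-n_{p-1}}$, and $\beta_{n_1}$ becomes $\beta_{n-n_p-1}$. Multiplying numerator and denominator by $(q)_{n_2}\cdots(q)_{n_p}$ telescopes the denominators into $\prod_{i=1}^{p-1}\begin{bmatrix} n_{i+1}\\ n_i\end{bmatrix}$ and leaves $1/(q)_{n_p}$. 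This is the left-hand side of \eqref{keychainlemmabiseq}.

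\emph{Main obstacle.} The only real issue is the order of operations: the Bailey lemma must be iterated at $a=q^2$ first, and the lattice map applied last. Doing it the other way would produce exponents $k^2+k$ and the wrong binomial weights. The remaining work is index and exponent bookkeeping identical to that in Proposition \ref{keychainlemma}. The check to make is the identity $(n-1-m)(n-1-m+2)=(n-m-1)(n-m+1)$, which confirms that the reindexing produces the stated exponents.
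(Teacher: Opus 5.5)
Your proposal is correct and follows the paper's own proof: iterate Lemma \ref{Baileylemma} $p$ times with $a=q^2$, then apply the same reindexing $n_i\to n-1-n_{p-i+1}$ as in Proposition \ref{keychainlemma}. After that, pass to a Bailey pair relative to $q$ via Lemma \ref{Baileylattice} and insert it into \eqref{pairdefbis} with $n\to n-1$; your bookkeeping checks out, and you correctly write $\beta_{n-n_p-1}$ where the paper's displayed $\beta^{(p)}_{n-1}$ has the typo $\beta_{n-n_p+1}$.
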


\begin{proof}
	Suppose that $(\alpha_k,\beta_k)$ is a Bailey pair relative to $q^2$.   We argue as in Proposition \ref{keychainlemma} with the same iterations of \eqref{alphaprime} and \eqref{betaprime} (but with $a=q^2$) and the same changes of indices.  The result is a Bailey pair relative to $q^2$, 
	\begin{equation} \label{iteratedalphaq^2}
		\alpha_k^{(p)} = q^{p(k^2+2k)}\alpha_k 
	\end{equation}  
	and 
	\begin{equation} \label{iteratedbetaq^2}
		\beta_{n-1}^{(p)} = \sum_{n-1 \geq n_p \geq \cdots \geq n_1\geq 0} \frac{q^{(n - n_1 - 1)(n- n_1 +1) + \cdots + (n-n_p-1)(n-n_p+1)}}{(q)_{n_1} (q)_{n_2-n_1} \cdots (q)_{n_p-n_{p-1}}} \beta_{n -n_p +1}.
	\end{equation}
	Now we change this to a Bailey pair relative to $q$ using Lemma \ref{Baileylattice} and then insert it into \eqref{pairdefbis} to complete the proof.    
\end{proof}

The third auxiliary result also applies to Bailey pairs relative to $q^2$, but in a different way.

\begin{proposition} \label{keychainlemmater}
	If $(\alpha_k,\beta_k)$ is a Bailey pair relative to $q^2$, then for any $p \geq 1$ we have
	\begin{equation} \label{keychainlemmatereq}
		\begin{aligned}
			\sum_{n-1 \geq n_p \geq \cdots \geq n_1\geq 0}& \frac{q^{(n-n_p-1)(n-n_p+1)}\beta_{n-n_p-1}}{(q)_{n_p}} \prod_{i=1}^{p-1} q^{(n-n_i-1)(n-n_i+1)} \begin{bmatrix} n_{i+1} \\ n_i \end{bmatrix} \\
			&= \frac{(1-q^2)}{(q)_{n-1}(q)_{n}} \sum_{k=0}^{n-1} \frac{(q^{1-n})_k}{(q^{2+n})_k}(-1)^k q^{nk -\binom{k+1}{2} + p(k^2+2k)} \alpha_k.
		\end{aligned}
	\end{equation}
\end{proposition}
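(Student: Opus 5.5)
The plan is to follow the proof of Proposition \ref{keychainlemmabis} for the left-hand side, but to skip the Bailey lattice. Instead we invert the defining relation directly with $a=q^2$ and shift the index $n \to n-1$. Starting from a Bailey pair $(\alpha_k,\beta_k)$ relative to $q^2$, we iterate Lemma \ref{Baileylemma} $p$ times with $a=q^2$. This gives the Bailey pair relative to $q^2$ consisting of $\alpha_k^{(p)} = q^{p(k^2+2k)}\alpha_k$ in \eqref{iteratedalphaq^2} and $\beta^{(p)}_k$ given by the $p$-fold sum. We replace $k$ by $n-1$ and make the change of indices $n_i \to n-1-n_{p-i+1}$. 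Then we multiply numerator and denominator by $(q)_{n_p}$ and convert to $q$-binomial coefficients. Exactly as in the proof of Proposition \ref{keychainlemma}, this identifies $\beta^{(p)}_{n-1}$ with the left-hand side of \eqref{keychainlemmatereq}. The exponents $(n-n_i-1)(n-n_i+1)$ come from $a^jq^{j^2}=q^{j^2+2j}=q^{j(j+2)}$ with $j=n-1-n_i$.

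For the right-hand side we apply \eqref{pairdefbis} with $a=q^2$ and $n$ replaced by $n-1$:
\begin{equation*}
\beta^{(p)}_{n-1} = \frac{1}{(q)_{n-1}(q^3)_{n-1}} \sum_{k=0}^{n-1} \frac{(q^{1-n})_k}{(q^{n+2})_k}(-1)^k q^{(n-1)k-\binom{k}{2}} \alpha^{(p)}_k .
\end{equation*}
Two simplifications then finish the proof. First, $(q^3)_{n-1} = (q)_{n+1}/((1-q)(1-q^2))$ and $(q)_{n+1} = (q)_n(1-q^{n+1})$. Second, $(n-1)k-\binom{k}{2} = nk - \binom{k+1}{2}$.

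These two steps alone give a prefactor $(1-q)(1-q^2)/((q)_{n-1}(q)_n(1-q^{n+1}))$ rather than $(1-q^2)/((q)_{n-1}(q)_n)$, so the $q$-Pochhammer bookkeeping must be rechecked. The main obstacle is reconciling this prefactor with the stated $(1-q^2)/((q)_{n-1}(q)_n)$, together with the $(q^{2+n})_k$ in the summand. I would rederive \eqref{pairdefbis} for $a=q^2$ directly from \eqref{pairdef} using \eqref{qPochn-k}, writing $1/(aq)_{n-1+k}$ as $1/\bigl((q^3)_{n-1}(q^{n+2})_k\bigr)$. The aim is to see whether the stated form arises from a slightly different normalization. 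One candidate is the convention in the change of indices, where the final $\beta$ in \eqref{iteratedbetaq^2} carries index $n-n_p\pm1$. Another is a factor $(1-q)/(1-q^{n+1})$ absorbed elsewhere.

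If the discrepancy persists, I would test the identity in the trivial case $p=1$, $n=1$, where both sides reduce to multiples of $\beta_0=\alpha_0/(1-q^3)$. This pins down the correct constant. I would then present the proof with the prefactor that the direct computation gives, since the structure of the argument (iterate the Bailey lemma, reindex, apply \eqref{pairdefbis} at $n-1$) is unaffected.
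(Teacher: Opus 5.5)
Your argument is the paper's: its proof is the single sentence ``substitute the iterated pair into \eqref{pairdefbis} with $a=q^2$.'' Your identification of the left-hand side with $\beta^{(p)}_{n-1}$ is correct. The $\beta_{n-n_p+1}$ printed in \eqref{iteratedbetaq^2} is a typo for $\beta_{n-n_p-1}$. Your expression for $\beta^{(p)}_{n-1}$ is also correct.

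The discrepancy you found is real, and it lies in the printed statement, not in your bookkeeping. The substitution yields the prefactor
\[
\frac{1}{(q)_{n-1}(q^3)_{n-1}}=\frac{(1-q)(1-q^2)}{(q)_{n-1}(q)_{n+1}},
\]
which is the printed $\frac{1-q^2}{(q)_{n-1}(q)_{n}}$ multiplied by $\frac{1-q}{1-q^{n+1}}$. The factor $(q^{2+n})_k$ in the summand already agrees with yours, so there is nothing to reconcile there. The left-hand side is fixed and equals $\beta^{(p)}_{n-1}$, and the right-hand side is then forced by \eqref{pairdefbis}. No choice of normalization can absorb the extra factor. So stop looking for a reconciliation, and state and prove the identity with your prefactor.

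Your proposed check does settle the matter, but you have $\beta_0$ wrong. By \eqref{pairdef}, $\beta_0=\alpha_0/\bigl((q)_0(aq)_0\bigr)=\alpha_0$ for every $a$, not $\alpha_0/(1-q^3)$. With your value, the check would appear to refute your own correct formula as well.

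With $\beta_0=\alpha_0$ and $n=1$:
\begin{itemize}
\item the left-hand side is $\beta_0=\alpha_0$;
\item your right-hand side is $\alpha_0$;
\item the printed right-hand side is $(1+q)\alpha_0$.
\end{itemize}
So the statement as printed fails whenever $\alpha_0\neq 0$, for instance for \eqref{BP9}, where $\alpha_0=\beta_0=1$. As a further check, for $n=2$, $p=1$ both sides of your version equal $\frac{\alpha_0}{(1-q)(1-q^3)}+\frac{q^3\alpha_1}{(1-q^3)(1-q^4)}$.

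The misprint does no harm where the proposition is used. In \eqref{obtained9} the prefactor should be $\frac{(1-q)(q)_{2N-1}}{(q)_{N-1}(q)_{N+1}}$. At a primitive $N$th root of unity with $N\ge 2$, the extra factor $\frac{1-q}{1-q^{N+1}}$ equals $1$, so the prefactor still becomes $1$. Hence Theorems~\ref{Baileyfamily9} and \ref{Baileyfamily10} are unaffected; the case $N=1$, $q=1$, can be checked by hand.
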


\begin{proof}
	This follows from substituting \eqref{iteratedalphaq^2} and \eqref{iteratedbetaq^2} into the definition of a Bailey pair in \eqref{pairdefbis} with $a=q^2$.
\end{proof}

\subsection{An arsenal of Bailey pairs} \label{arseBP}
Here we collect all of the Bailey pairs we will use in the proof of Theorem \ref{main}.   There are eight Bailey pairs that we cite from the literature -- six Bailey pairs relative to $q$ and two Bailey pairs relative to $q^2$.   First, we have four Bailey pairs relative to $q$ from Slater's list \cite[p.463, A(2), A(4), A(6), A(8)]{Sl}:

\begin{equation} \label{BP1}
	\alpha_k = 
	\begin{cases}
		q^{6r^2 -r} & \text{if $k=3r-1$}, \\
		q^{6r^2 +r} & \text{if $k=3r$}, \\
		-q^{6r^2+5r+1} - q^{6r^2+7r+2} &\text{if $k=3r+1$}
	\end{cases}
	\ \ \ \ \text{and} \ \ \ \ \beta_k = \frac{1}{(q^2)_{2k}},
\end{equation} 

\begin{equation} \label{BP2}
	\alpha_k = 
	\begin{cases}
		q^{6r^2 -4r} & \text{if $k=3r-1$}, \\
		q^{6r^2 +4r} & \text{if $k=3r$}, \\
		-q^{6r^2+8r+2} - q^{6r^2+4r} &\text{if $k=3r+1$}
	\end{cases}
\ \ \ \ \text{and} \ \ \ \ \beta_k = \frac{q^k}{(q^2)_{2k}},
\end{equation}  

\begin{equation} \label{BP3}
	\alpha_k = 
	\begin{cases}
		q^{3r^2+r} & \text{if $k=3r-1$}, \\
		q^{3r^2-r} & \text{if $k=3r$}, \\
		-q^{3r^2+r} - q^{3r^2+5r+2} &\text{if $k=3r+1$}
	\end{cases}
\ \ \ \ \text{and} \ \ \ \ \beta_k = \frac{q^{k^2}}{(q^2)_{2k}}
\end{equation}
and
\begin{equation} \label{BP4}
	\alpha_k = 
	\begin{cases}
		q^{3r^2 -2r} & \text{if $k=3r-1$}, \\
		q^{3r^2 +2r} & \text{if $k=3r$}, \\
		-q^{3r^2+4r+1} - q^{3r^2+2r} &\text{if $k=3r+1$}
	\end{cases}
\ \ \ \ \text{and} \ \ \ \ \beta_k = \frac{q^{k^2+k}}{(q^2)_{2k}}.
\end{equation}

Next we have two Bailey pairs relative to $q$ from work of Warnaar \cite{Wa}:

\begin{equation} \label{BP5}
	\alpha_k = (-1)^{\lfloor \frac{4k+1}{3} \rfloor}q^{\frac{k(2k-1)}{3}}\frac{(1-q^{2k+1})}{(1-q)}\chi(k \not \equiv 1 \pmod{3})
\ \ \ \ \text{and} \ \ \ \	\beta_k = \frac{1}{(q)_{2k}}
\end{equation}
and
\begin{equation} \label{BP6}
	\alpha_k = (-1)^{\lfloor \frac{4k+1}{3} \rfloor}q^{\frac{k(k-2)}{3}}\frac{(1-q^{2k+1})}{(1-q)}\chi(k \not \equiv 1 \pmod{3}) \ \ \ \ \text{and} \ \ \ \ 
	\beta_k = \frac{q^{k^2-k}}{(q)_{2k}}.
\end{equation}

Finally, we have two Bailey pairs relative to $q^2$.   These are \cite[Eq. (2.34), $e \to 0$ and $e \to \infty$]{Si-Mc-Zi}:
\begin{equation} \label{BP7}
	\alpha_k = 
	\begin{dcases}
	        0 &\text{if $k=3r-1$}, \\	
		\frac{(1-q^{6r+2})}{1-q^2}q^{3r^2-r} & \text{if $k=3r$}, \\
		-\frac{(1-q^{6r+4})}{1-q^2}q^{3r^2+r} & \text{if $k=3r+1$} 
	\end{dcases}
\ \ \ \ \text{and} \ \ \ \ 
	\beta_k = \frac{q^{k^2}}{(q^2)_{2k}}
\end{equation}
and
\begin{equation} \label{BP8} 
	\alpha_k = 
	\begin{dcases}
	         0 &\text{if $k=3r-1$}, \\
		\frac{(1-q^{6r+2})}{1-q^2}q^{6r^2+r} & \text{if $k=3r$}, \\
		-\frac{(1-q^{6r+4})}{1-q^2}q^{6r^2+5r+1} & \text{if $k=3r+1$}
	\end{dcases}
\ \ \ \ \text{and} \ \ \ \
	\beta_k = \frac{1}{(q^2)_{2k}}.
\end{equation}

To these we add two Bailey pairs relative to $q^2$ which we were not able to locate in the literature.
\begin{lemma}
The following two sequences are Bailey pairs relative to $q^2$:
	\begin{equation} \label{BP9}
		\alpha_k = 
		\begin{dcases}
			\frac{(1-q^{k+1})}{1-q}\left(q^{3r^2-2r} - q^{3r^2-r}\right) & \text{if $k=3r-1$}, \\
			\frac{(1-q^{k+1})}{1-q}q^{3r^2+r} & \text{if $k=3r$}, \\
			-\frac{(1-q^{k+1})}{1-q}q^{3r^2+2r} &\text{if $k=3r+1$}
		\end{dcases}
		\ \ \ \ \text{and} \ \ \ \ \beta_k = \frac{q^{k^2+k}(1+q)}{(q^2)_{2k}(1+q^{k+1})}
	\end{equation}
and	
	\begin{equation} \label{BP10}
		\alpha_k = 
		\begin{dcases}
			\frac{(1-q^{k+1})}{1-q}\left(q^{6r^2-r} - q^{6r^2-2r}\right) & \text{if $k=3r-1$}, \\
			\frac{(1-q^{k+1})}{1-q}q^{6r^2+2r} & \text{if $k=3r$}, \\
			-\frac{(1-q^{k+1})}{1-q}q^{6r^2+7r+2} &\text{if $k=3r+1$}
		\end{dcases}
		\ \ \ \ \text{and} \ \ \ \ \beta_k = \frac{(1+q)}{(q^2)_{2k}(1+q^{k+1})}.
	\end{equation}  
	
\end{lemma}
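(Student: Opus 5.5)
The plan is to derive (\ref{BP9}) and (\ref{BP10}) from the known pairs (\ref{BP7}) and (\ref{BP8}) relative to $q^2$, rather than to verify \eqref{pairdef} directly. The $\beta_k$ in (\ref{BP9}) is $\beta_k^{(7)} \cdot q^k(1+q)/(1+q^{k+1})$, where $\beta_k^{(7)}=q^{k^2}/(q^2)_{2k}$ is the $\beta$ of (\ref{BP7}). Likewise the $\beta_k$ in (\ref{BP10}) is $\beta_k^{(8)}\cdot(1+q)/(1+q^{k+1})$. A factor of the shape $(1+q)/(1+q^{k+1}) = (-q)_k/(-q^2)_k$ arises naturally from the Bailey lemma with general parameters $\rho_1,\rho_2$ in the limit where one parameter is $-q$ and the other tends to infinity. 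It is therefore natural to seek a known Bailey pair $(A_k,B_k)$ relative to $q^2$ and apply the general Bailey lemma
\[
\beta'_n=\sum_{j}\frac{(\rho_1)_j(\rho_2)_j(aq/\rho_1\rho_2)_{n-j}}{(q)_{n-j}(aq/\rho_1)_n(aq/\rho_2)_n}\Big(\frac{aq}{\rho_1\rho_2}\Big)^j B_j .
\]
This alone does not give a closed-form $\beta$, however. So I would first try the simpler direct route.

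The direct route is to check the defining relation \eqref{pairdef} with $a=q^2$ by inverting it. A pair is Bailey if and only if
\[
\alpha_n=\frac{1-q^{2n+2}}{1-q^2}\sum_{j=0}^n\frac{(q^2)_{n+j}(-1)^{n-j}q^{\binom{n-j}{2}}}{(q)_{n-j}}\beta_j .
\]
For these $\beta_j$ this becomes a terminating basic hypergeometric sum. Writing $(q^2)_{n+j}/(q^2)_{2j}=(q^{2j+2})_{n-j}$ and $1/(1+q^{j+1})=(-q)_j/(-q)_{j+1}$, the sum for (\ref{BP9}) is, up to elementary factors,
\[
\sum_{j}\frac{(q^{-n})_j(q^{n+2})_j(-q)_j\,\cdots}{(q)_j(-q^2)_j\cdots}q^{j^2+\cdots}.
\]
This is a sum with a quadratic power of $q$, of the type handled by the quintuple product or by the Slater/Warnaar mod-$3$ patterns. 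Its mod-$3$ structure should match the case split $k=3r-1,3r,3r+1$.

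The cleaner strategy, and the one I would actually carry out, is to split $(1+q)/(1+q^{k+1})$ so as to reduce to the known pairs. Note that $(1-q^{k+1})/(1-q)\cdot(1+q)/(1+q^{k+1})$ relates $\alpha$'s with the factor $(1-q^{2k+2})/(1-q^2)$, which appears in (\ref{BP7}) and (\ref{BP8}). I would compute $\sum_k \alpha_k/((q)_{n-k}(q^3)_{n+k})$ for the candidate $\alpha$, split each $\alpha_k$ into its two monomial pieces, and apply $q$-Pascal-type telescoping. The aim is to express the result as a combination of $\beta^{(7)}_n$-type sums with shifted exponents, using the identity $1/(1+q^{n+1}) = (1-q^{n+1})/(1-q^{2n+2})$.

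The main obstacle is finding the right summation formula. I would first try to recognize the inverted sum as a specialization of the Andrews $q$-analogue of Bailey's ${}_4\phi_3$ summation, or of a Rogers–Fine type identity, giving a closed form that is periodic mod $3$. If that fails, I would use a recurrence (Zeilberger's algorithm) to show that both sides satisfy the same first-order recurrence in $n$ within each residue class mod $3$, and then check the initial values $n=0,1,2$.
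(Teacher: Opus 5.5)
There is a genuine gap: the proposal does not actually prove the identity. Your inversion of \eqref{pairdef} at $a=q^2$ is correct, but the resulting terminating sum is never evaluated. You say yourself that finding the summation formula is the main obstacle, and none of the candidate tools is shown to apply. The ``split and telescope down to \eqref{BP7}/\eqref{BP8}'' step is too vague to check. The Zeilberger fallback also fails as stated on the class $k=3r-1$. There the claimed $\alpha_k=\frac{1-q^{3r}}{1-q}\,q^{3r^2-2r}(1-q^{r})$ is a sum of two $q$-hypergeometric terms (likewise in \eqref{BP10}). The ratio $\alpha_{3r+2}/\alpha_{3r-1}$ contains $(1-q^{r+1})/(1-q^{r})$, which cannot be written as $R(q,q^{n})$ with $R$ rational and $n=3r-1$. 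So no first-order recurrence $S_{n+3}=R(q,q^n)S_n$ holds on that class, and you would need at least order two there.

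The missing idea is a transformation that multiplies $\beta_k$ \emph{pointwise} by $(1+q)/(1+q^{k+1})=(-q)_k/(-q^2)_k$. The general Bailey lemma cannot do this: the factors $(\rho_1)_j$ and $(aq/\rho_1)_n$ carry different indices, and $a$ stays fixed. The paper instead uses \cite[Eq. (2.4)]{Lo.5}, which moves from $a$ to $aq$ via $\beta_k^{\dag}=\frac{(b)_k}{(bq)_k}\beta_k$ together with the explicit $\alpha_k^{\dag}$ in \eqref{alphadag}. Take $a=q$ and $b=-q$. The right starting pair is then \eqref{BP4} relative to $q$, not \eqref{BP7}: its $\beta_k=q^{k^2+k}/(q^2)_{2k}$ times $(1+q)/(1+q^{k+1})$ is exactly the $\beta_k$ of \eqref{BP9}. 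Formula \eqref{alphadag} then collapses to
\[
\alpha_k^{\dag}=\frac{1-q^{k+1}}{1-q}\,q^{\binom{k+1}{2}}\sum_{j=0}^{k}q^{-\binom{j+1}{2}}\alpha_j ,
\]
which also explains the factor $\frac{1-q^{k+1}}{1-q}$. The partial sums reduce to one or two monomials; for $k=0,\dots,4$ they are $1,\,-q^{-1},\,q^{-2}-q^{-1},\,q^{-2},\,-q^{-5}$. A short induction on $k$, organized by $k \bmod 3$, then gives \eqref{BP9}. For \eqref{BP10} the paper takes the dual $\alpha_k^d=q^{k^2+2k}\alpha_k(q^{-1})$, $\beta_k^d=q^{-k^2-3k}\beta_k(q^{-1})$ of \eqref{BP9}. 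Alternatively, the same transformation with $b=-q$ applied to \eqref{BP1}, whose $\beta_k=1/(q^2)_{2k}$, yields \eqref{BP10} directly.
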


\begin{proof}
	We require \cite[Eq. (2.4)]{Lo.5}, which says that if $(\alpha_k,\beta_k)$ is a Bailey pair relative to $a$, then $(\alpha_k^{\dag},\beta_k^{\dag})$ is a Bailey pair relative to $aq$, where
	\begin{equation} \label{alphadag}
		\alpha_k^{\dag} = \frac{(1-aq^{2k+1})(aq/b)_k(-b)^kq^{\binom{k}{2}}}{(1-aq)(bq)_k} \sum_{j=0}^k \frac{(b)_j}{(aq/b)_j} (-b)^{-j} q^{-\binom{j}{2}} \alpha_j
	\end{equation}
	and
	\begin{equation} \label{betadag}
		\beta_k^{\dag} = \frac{(b)_k}{(bq)_k}\beta_k.
	\end{equation}
	We apply this with $b = -q$ to the Bailey pair relative to $q$ in \eqref{BP4}.  The resulting $\beta_k^{\dag}$ is clearly the $\beta_k$ in \eqref{BP9}, while
	\begin{equation}
		\alpha_k^{\dag} = \frac{(1-q^{k+1})}{1-q} q^{\binom{k+1}{2}} \sum_{j=0}^k q^{-\binom{j+1}{2}}\alpha_j.
	\end{equation}  
	An induction argument on $k$ confirms that $\alpha_k^{\dag}$ equals $\alpha_k$ from \eqref{BP9}. 
	
Next we recall that if $(\alpha_k,\beta_k)$ is a Bailey pair relative to $q^2$, then its dual \cite{An1} $(\alpha_k^d,\beta_k^d)$ is also a Bailey pair relative to $q^2$, where
\begin{equation}
\alpha_k^d = q^{k^2+2k}\alpha_k(q^{-1}) \ \ \ \text{and} \ \ \ \beta_k^d = q^{-k^2-3k}\beta_k(q^{-1}).
\end{equation}	
Using the fact that
\begin{equation}
(q^{-2};q^{-1})_{2k} = q^{-2k^2-3k}(q^2)_{2k},
\end{equation}
a short calculation shows that \eqref{BP10} is the dual of \eqref{BP9}.   
\end{proof}

\subsection{The key results}
Here we state and prove the key results relating (\ref{h1}), (\ref{h8}), (\ref{h2})--(\ref{h7}), (\ref{h9}) and (\ref{h10}) to finite polynomial sums at roots of unity. We have chosen to present these results following the order in which the Bailey pairs in Section \ref{arseBP} were introduced.
\begin{theorem} \label{Baileyfamily1}
	Let $(\alpha_k,\beta_k)$ denote the Bailey pair in \eqref{BP1}.   Then for $p \geq 1$ and $q$ any primitive $N$th root of unity we have
	\begin{equation} \label{Baileyfamily1eq}
		H_p^{(6)}(q) = \sum_{k=0}^{N-1} (-1)^k q^{\binom{k+1}{2} +(p-1)(k^2+k)} \alpha_k.
	\end{equation} 
\end{theorem}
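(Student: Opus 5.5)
The plan is to specialize Proposition \ref{keychainlemma} to the Bailey pair \eqref{BP1} (relative to $q$), with the same $p$, and then put $n=N$ and let $q$ tend to a primitive $N$th root of unity. First I multiply \eqref{keychainlemmaeq} through by $(q)_{n-1}(q^2)_{n-1}$, so that it becomes an identity between rational functions of $q$ for each fixed $n$.

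The right-hand side is easy. For $0\le k\le N-1$ and $q^N=1$,
\begin{equation*}
\frac{(q^{1-N})_k}{(q^{1+N})_k}=\frac{(q)_k}{(q)_k}=1 \qquad\text{and}\qquad q^{Nk}=1,
\end{equation*}
and these denominators do not vanish. Also $-\binom{k+1}{2}+p(k^2+k)=\binom{k+1}{2}+(p-1)(k^2+k)$. So after multiplication the right-hand side becomes exactly $\sum_{k=0}^{N-1}(-1)^kq^{\binom{k+1}{2}+(p-1)(k^2+k)}\alpha_k$, as in \eqref{Baileyfamily1eq}.

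On the left-hand side the exponents simplify at $q^N=1$. For each $0\le m\le N-1$ we have $(N-m-1)(N-m)\equiv m(m+1)\pmod N$, so every $q^{(N-n_i-1)(N-n_i)}$ becomes $q^{n_i^2+n_i}$. This reproduces the product $\prod_{i=1}^{p-1}q^{n_i^2+n_i}\left[\begin{smallmatrix} n_{i+1}\\ n_i\end{smallmatrix}\right]$ in \eqref{h6}. Write $s=n_p$ and $m=N-1-s$. Since $\beta_m=1/(q^2)_{2m}$, the remaining factor attached to the outer variable is
\begin{equation*}
q^{m(m+1)}\,\frac{(q)_{N-1}}{(q)_s}\cdot\frac{(q^2)_{N-1}}{(q^2)_{2m}} = q^{m(m+1)}\,(q^{s+1})_{m}\cdot\frac{(q^2)_{s+m}}{(q^2)_{2m}} .
\end{equation*}
The last quotient is handled in two cases, using $s+m=N-1$.
\begin{itemize}
\item If $2m\le N-1$, it equals the polynomial $(q^{2m+2})_{s-m}$.
\item If $2m> N-1$, it equals $1/(q^{N+1})_{2m-N+1}$. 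At $q^N=1$ this is $1/(q)_{2m-N+1}$, which is nonzero because $2m-N+1\le N-1$.
\end{itemize}
So for each fixed $n$ both sides are regular at a primitive $N$th root of unity, and evaluating at $n=N$ is legitimate. The problematic factor $1-q^N$ in $(q^2)_{N-1}$ cancels against the same factor in $(q^2)_{2m}$ whenever it occurs.

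The key step is to show that, at $q$ a primitive $N$th root of unity, the displayed factor equals $q^{-s^2}(q^{s+1})_s$. The tool is the reflection $1-q^{N-j}=-q^{-j}(1-q^j)$, which converts products over the range $[s+1,N-1]$ into products over $[1,N-1-s]$ and so on. In the case $2s\ge N$, i.e. $2m\le N-2$, the sum in \eqref{h6} is truncated automatically: $(q^{s+1})_s$ then contains the factor $1-q^N=0$, and on the other side $(q^{2m+2})_{s-m}$ also contains $1-q^N$. So both vanish.

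In the case $2s<N$, I would rewrite $(q^{s+1})_m/(q)_{2m-N+1}$ as a product of factors $1-q^j$. Pairing $j$ with $N-j$ should leave exactly $(q^{s+1})_s$ times a power of $q$ times a sign. Counting the reflected factors and combining their $q$-powers with $q^{m(m+1)}$ should give the sign $+1$ and the net power $q^{-s^2}$. I expect this bookkeeping to be the main obstacle: getting the sign and the exponent exactly right, including the boundary cases $2m=N-1$ and $s=0$. I would first check it numerically for small $N$ and $p=1$.

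Once this is established, the left-hand side is the sum over $N-1\ge n_p\ge\cdots\ge n_1\ge 0$ of the summand of \eqref{h6}. The terms with $n_p\ge N$ vanish in \eqref{h6} anyway, since $(q^{n_p+1})_{n_p}$ then contains a multiple of $N$. Hence the left-hand side equals $H_p^{(6)}(q)$, which proves \eqref{Baileyfamily1eq}.
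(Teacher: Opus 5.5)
Your proposal is correct, and its overall structure matches the paper's: specialize Proposition \ref{keychainlemma} to the pair \eqref{BP1} with $n=N$, then let $q$ be a primitive $N$th root of unity. The two arguments differ only in how they treat $\beta_{N-1-n_p}=1/(q^2)_{2N-2-2n_p}$.

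The paper applies \eqref{qPochn-k} to $\beta_{N-1-n_p}$ while $q$ is still generic, obtaining
\[
\beta_{N-1-n_p}=\frac{(q^{1-2N})_{2n_p}\,q^{4Nn_p-2n_p^2-n_p}}{(q^2)_{2N-2}}.
\]
It then clears $(q^2)_{2N-2}$ instead of $(q)_{N-1}(q^2)_{N-1}$. After this, each left-hand summand is a polynomial. At $q^N=1$ it specializes directly to $q^{-n_p^2}(q)_{2n_p}/(q)_{n_p}=q^{-n_p^2}(q^{n_p+1})_{n_p}$, with no case split. The truncation at $2n_p\ge N$ also comes for free from the factor $(q)_{2n_p}$. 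The cost is a right-hand prefactor $\frac{(q^2)_{2N-2}}{(q)_{N-1}(q^2)_{N-1}}=\left[\begin{smallmatrix}2N-1\\ N\end{smallmatrix}\right]$, which equals $1$ at a primitive $N$th root of unity. Your normalization makes the right-hand side trivial, but it moves the reflection step to the root of unity. That is why you need two cases and why you expected the sign and exponent bookkeeping to be the main obstacle.

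The bookkeeping you left open does close. For $2s<N$, note that $2m-N+1=N-1-2s$. Pairing $j$ with $N-j$ in the denominator gives, at $q^N=1$,
\[
\frac{(q^{s+1})_m}{(q)_{N-1-2s}}=(-1)^{N-1-2s}\,q^{\sum_{i=2s+1}^{N-1} i}\,(q^{s+1})_s=(-1)^{N-1}q^{\binom{N}{2}}\,q^{-s(2s+1)}\,(q^{s+1})_s .
\]
Here $(-1)^{N-1}q^{\binom{N}{2}}=1$. For even $N$ this step uses $q^{N/2}=-1$, which is where primitivity enters beyond the nonvanishing of the denominators. Combining this with $q^{m(m+1)}=q^{s^2+s}$ yields exactly $q^{-s^2}(q^{s+1})_s$. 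The boundary cases $2s=N-1$ (where $m=s$ and $(q)_0=1$) and $s=0$ are covered by the same formula, so no separate check is needed.
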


\begin{theorem} \label{Baileyfamily2}
	Let $(\alpha_k,\beta_k)$ denote the Bailey pair in \eqref{BP2}.   Then for $p \geq 1$ and $q$ any primitive $N$th root of unity we have
	\begin{equation} \label{Baileyfamily2eq}
		H_p^{(7)}(q)  = q\sum_{k=0}^{N-1} (-1)^k q^{\binom{k+1}{2} +(p-1)(k^2+k)} \alpha_k.
	\end{equation} 
\end{theorem}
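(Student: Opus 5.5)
The plan is to apply Proposition \ref{keychainlemma} to the Bailey pair \eqref{BP2}, with $n=N$, and then specialize $q$ to a primitive $N$th root of unity $\zeta$. Since both sides of \eqref{keychainlemmaeq} contain factors that vanish or blow up at $q=\zeta$, I first multiply \eqref{keychainlemmaeq} by $(q)_{N-1}(q^2)_{N-1}$. This gives an identity of rational functions of $q$, and I then let $q \to \zeta$.

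\emph{Right-hand side.} For $0\le k\le N-1$ we have $(q^{1-N})_k/(q^{1+N})_k \to (q)_k/(q)_k = 1$ at $q=\zeta$, and no denominator vanishes there. Also $q^{Nk}=1$, and $-\binom{k+1}{2}+p(k^2+k) = \binom{k+1}{2}+(p-1)(k^2+k)$. Hence the right-hand side tends to $\sum_{k=0}^{N-1}(-1)^k \zeta^{\binom{k+1}{2}+(p-1)(k^2+k)}\alpha_k$, which is the sum in \eqref{Baileyfamily2eq} without the prefactor $q$.

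\emph{Left-hand side.} Here $\beta_m = q^m/(q^2)_{2m}$ with $m=N-1-n_p$. The term indexed by $n_p\ge\cdots\ge n_1$ is multiplied by
\[
\frac{(q)_{N-1}}{(q)_{n_p}}\cdot\frac{(q^2)_{N-1}}{(q^2)_{2m}}.
\]
The factor $(q^2)_{N-1}$ contains exactly one factor $1-q^N$. If $2m<N-1$, this factor survives in $(q^2)_{N-1}/(q^2)_{2m}=(q^{2m+2})_{N-1-2m}$ and the term vanishes at $\zeta$. If $2m\ge N-1$, the factor $1-q^N$ cancels against the one in $(q^2)_{2m}$, and the term has a finite nonzero limit. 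Equivalently, only terms with $n_p\le (N-1)/2$ survive. This matches the fact that the summand of $H_p^{(7)}$ contains $(q^{n_p+1})_{n_p}$, which vanishes at $\zeta$ once $2n_p\ge N$.

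To evaluate the surviving terms I use the root-of-unity identities
\[
(\zeta)_{N-1}=N, \qquad (\zeta)_{N-1-j} = N(-1)^j\zeta^{\binom{j+1}{2}}/(\zeta)_j .
\]
These let me rewrite $(q)_{N-1}/(q)_{n_p}$ and the quotient $(q^2)_{N-1}/(q^2)_{2m}$ as Pochhammer symbols in $n_p$. I also replace each exponent $(N-n_i-1)(N-n_i)$ by $n_i^2+n_i$ modulo $N$, since $q^N=1$.

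The resulting multisum should coincide with $H_p^{(7)}(\zeta)$ after the inversion $q\mapsto q^{-1}$, using that $\zeta^{-1}$ is again a primitive $N$th root of unity and that $\begin{bmatrix} n\\k\end{bmatrix}_{q^{-1}} = q^{-k(n-k)}\begin{bmatrix} n\\k\end{bmatrix}_q$. The negative exponent $q^{-n_p^2-n_p}$ in \eqref{h7} is a strong hint that such an inversion, or equivalently the dual form of the sum, is what makes the quadratic powers match. Should the inversion alone fail to line up the powers, the fallback is to run the identical argument with the pair \eqref{BP2} replaced by its dual (as in the proof of \eqref{BP10}) before specializing.

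The main obstacle is this bookkeeping in the left-hand side. One must check that the limiting ratio of the vanishing factors is exactly $1$, and that all the powers of $q$ combine correctly. These come from $\zeta^{\binom{j+1}{2}}$, the exponents $(N-n_i-1)(N-n_i)$, the factor $q^m$ in $\beta_m$, and the binomial inversions, and together they should produce precisely $q^{-n_p^2-n_p}(q^{n_p+1})_{n_p}\prod_i q^{n_i^2+n_i}\begin{bmatrix} n_{i+1}\\ n_i\end{bmatrix}$ with an overall surplus of $q^{-1}$. Moving that surplus to the other side gives the prefactor $q$ in \eqref{Baileyfamily2eq}. I would first verify the whole chain for $p=1$ and small $N$ to fix the signs and powers, and then carry out the general case.
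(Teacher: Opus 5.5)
Your framework is the paper's: Proposition~\ref{keychainlemma} with $n=N$ applied to \eqref{BP2}, then specialization at a primitive $N$th root of unity $\zeta$. Your treatment of the right-hand side is correct, and so is your truncation argument that only $n_p\le (N-1)/2$ survives. The gap is the final identification. You leave it undone, and you predict the wrong mechanism for it.

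No inversion and no dualization is needed, and either one would fail:
\begin{itemize}
\item The dual of \eqref{BP2} relative to $q$ is exactly \eqref{BP3}. That is the pair which yields $H_p^{(3)}$ in Theorem~\ref{Baileyfamily3}, not $H_p^{(7)}$.
\item Once you reduce $(N-n_i-1)(N-n_i)\equiv n_i^2+n_i \pmod N$, as you propose, the weights $\zeta^{n_i^2+n_i}\begin{bmatrix} n_{i+1}\\ n_i\end{bmatrix}$ already have the shape in \eqref{h7}.
\item Rewriting them in the variable $\zeta^{-1}$ via $\begin{bmatrix} n\\ k\end{bmatrix}_{q}=q^{k(n-k)}\begin{bmatrix} n\\ k\end{bmatrix}_{q^{-1}}$ turns them into $(\zeta^{-1})^{-n_in_{i+1}-n_i}\begin{bmatrix} n_{i+1}\\ n_i\end{bmatrix}_{\zeta^{-1}}$. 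This no longer matches \eqref{h7}.
\end{itemize}

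The identities you list already finish the proof directly. Take $n_p\le (N-1)/2$ and $m=N-1-n_p$. Then $(q^2)_{2m}=(q^2)_{N-1}(q^{N+1})_{N-1-2n_p}$, so the factor you isolated is exactly $(q)_{N-1}/\bigl((q)_{n_p}(q^{N+1})_{N-1-2n_p}\bigr)$. The two factors $1-q^N$ cancel identically, so no limit is involved. At $q=\zeta$, your identity with $j=2n_p$ gives
\[
\frac{N}{(\zeta)_{n_p}(\zeta)_{N-1-2n_p}}=\zeta^{-n_p(2n_p+1)}\frac{(\zeta)_{2n_p}}{(\zeta)_{n_p}}=\zeta^{-2n_p^2-n_p}(\zeta^{n_p+1})_{n_p}.
\]
Multiply by $\zeta^{(N-n_p-1)(N-n_p)}\zeta^{m}=\zeta^{n_p^2+n_p}\zeta^{-1-n_p}$. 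This gives $\zeta^{-1}\zeta^{-n_p^2-n_p}(\zeta^{n_p+1})_{n_p}$, so the left side is exactly $\zeta^{-1}H_p^{(7)}(\zeta)$. The negative exponent in \eqref{h7} comes from $\zeta^{\binom{2n_p+1}{2}}$, not from an inversion.

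This is the paper's computation in different form. The paper uses \eqref{qPochn-k} to write
\[
\beta_{N-1-n_p}=\frac{(q^{1-2N})_{2n_p}\,q^{4Nn_p-2n_p^2-2n_p+N-1}}{(q^2)_{2N-2}}.
\]
It then clears $(q^2)_{2N-2}$ rather than $(q)_{N-1}(q^2)_{N-1}$. The specialization $(q^{1-2N})_{2n_p}\to(\zeta)_{2n_p}$ produces the truncation and the factor $(\zeta^{n_p+1})_{n_p}$ at once. The only limit left to check is $(q^2)_{2N-2}/\bigl((q)_{N-1}(q^2)_{N-1}\bigr)\to 1$.
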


\begin{theorem} \label{Baileyfamily3} 
	Let $(\alpha_k,\beta_k)$ denote the Bailey pair in \eqref{BP3}.   Then for $p \geq 1$ and $q$ any primitive $N$th root of unity we have
	\begin{equation} \label{Baileyfamily3eq}
		H_p^{(3)}(q) = q^{-1}\sum_{k=0}^{N-1} (-1)^k q^{\binom{k+1}{2} +(p-1)(k^2+k)} \alpha_k.
	\end{equation} 
\end{theorem}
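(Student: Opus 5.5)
The plan is to specialize Proposition~\ref{keychainlemma}, applied to the Bailey pair \eqref{BP3}, to $n=N$, and to read both sides at a primitive $N$th root of unity. Both sides have poles there, so I will first multiply through by $(1-q^N)$ and take a limit. Exponent bookkeeping already points to using the proposition with the same $p$. The right-hand side of \eqref{keychainlemmaeq} carries $q^{nk-\binom{k+1}{2}+p(k^2+k)}$. At $n=N$ with $q^N=1$ this equals $q^{\binom{k+1}{2}+(p-1)(k^2+k)}$, because $-\binom{k+1}{2}+(k^2+k)=\binom{k+1}{2}$. This is exactly the weight in \eqref{Baileyfamily3eq}. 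Moreover, at $n=N$ we have $(q^{1-N})_k=(q)_k$ and $(q^{1+N})_k=(q)_k$, so the ratio $(q^{1-n})_k/(q^{1+n})_k$ becomes $1$ for $0\le k\le N-1$.

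Concretely, I treat $q$ as a variable in \eqref{keychainlemmaeq} with $n=N$, multiply both sides by $(1-q^N)$, and let $q\to\zeta$, a primitive $N$th root of unity.

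On the right-hand side, the prefactor is $1/((q)_{N-1}(q^2)_{N-1}) = (1-q)/((q)_{N-1}(q)_N)$. Multiplied by $1-q^N$ it tends to $(1-\zeta)/N^2$, using $(\zeta)_{N-1}=N$. So the right-hand side becomes $\frac{1-\zeta}{N^2}\sum_{k=0}^{N-1}(-1)^k\zeta^{\binom{k+1}{2}+(p-1)(k^2+k)}\alpha_k$.

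On the left-hand side, set $m=N-1-n_p$. The inner factor is $\beta_m=q^{m^2}/(q^2)_{2m}$. Its denominator contains $1-q^N$ exactly once when $2m+1\ge N$ and not at all otherwise. Hence after multiplying by $(1-q^N)$, the terms with $2m+1<N$ vanish in the limit. The surviving terms have $n_p\le (N-1)/2$, and each picks up the finite value of $(1-q^N)/(q^2)_{2m}$ at $q=\zeta$.

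The remaining work, which I expect to be the main obstacle, is to identify this truncated left-hand side with $\frac{1-\zeta}{N^2}\,\zeta\,H_p^{(3)}(\zeta)$. The required factor $\zeta^{-1}$ in \eqref{Baileyfamily3eq} should come out of this matching. The tool will be the root-of-unity reflection
\[
(\zeta)_j(\zeta)_{N-1-j} = N(-1)^j\zeta^{-\binom{j+1}{2}},
\]
which follows from $(\zeta)_{N-1}=N$ and \eqref{qPochn-k}.

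Using it, I will rewrite $1/(\zeta)_{n_p}$ and the limit of $(1-q^N)/(q^2)_{2m}$ in terms of Pochhammer symbols in $n_p$. The target is the shape $\zeta^{2n_p}(\zeta^{n_p+1})_{n_p}/N^2$. On the series side, $H_p^{(3)}(\zeta)$ terminates naturally, since $(\zeta^{n+1})_n$ contains $1-\zeta^N$ as soon as $2n\ge N$, so its range of $n_p$ matches the truncation above.

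For the inner variables $n_1,\dots,n_{p-1}$, the exponents $(N-n_i-1)(N-n_i)$ reduce modulo $N$ to $n_i(n_i+1)$. Also $q^{(N-n_p-1)(N-n_p)}q^{m^2}$ should combine with the reflection factors to give $\zeta^{2n_p}$ times a harmless power. The $q$-binomials $\begin{bmatrix} n_{i+1}\\ n_i\end{bmatrix}$ are already in the form appearing in \eqref{h3}, so they are unchanged.

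I will check the powers of $\zeta$ and the signs carefully in this step. If the bookkeeping does not close, my fallback is to apply \eqref{qPochn-k} directly to rewrite $(q^2)_{2m}$ with $m=N-1-n_p$ before taking the limit. After cancelling the common factor $(1-\zeta)/N^2$, the identity \eqref{Baileyfamily3eq} follows.
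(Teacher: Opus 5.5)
Your strategy is correct for $N\ge 2$ and has the same skeleton as the paper: Proposition \ref{keychainlemma} at $n=N$ with \eqref{BP3}, the ratio $(q^{1-N})_k/(q^{1+N})_k\to 1$, and $-\binom{k+1}{2}+(k^2+k)=\binom{k+1}{2}$. The difference is how the poles at $q=\zeta$ are handled, and your main route is precisely your fallback's alternative. The paper takes no limit. It rewrites $\beta_{N-1-n_p}$ via \eqref{qPochn-k} as in \eqref{betan-1-p3}, namely $(q^{1-2N})_{2n_p}q^{4Nn_p-2n_p^2-n_p+(N-n_p-1)^2}/(q^2)_{2N-2}$, and multiplies both sides by the common denominator $(q^2)_{2N-2}$. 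After this, everything is regular at $\zeta$:
\begin{itemize}
\item the prefactor $(q^2)_{2N-2}/((q)_{N-1}(q^2)_{N-1})=(q^{N+1})_{N-1}/(q)_{N-1}$ becomes $1$;
\item $(\zeta)_{2n_p}/(\zeta)_{n_p}=(\zeta^{n_p+1})_{n_p}$;
\item the exponent collapses to $n_p(n_p+1)-2n_p^2-n_p+(n_p+1)^2=2n_p+1$, which produces the factor $q^{-1}$.
\end{itemize}
The truncation $2n_p<N$ is then automatic. Multiplying only by $1-q^N$ instead rescales by $\lim_{q\to\zeta}(q^2)_{2N-2}/(1-q^N)=N^2/(1-\zeta)$. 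That is exactly your common factor, and it is why you need the extra inputs $(\zeta)_{N-1}=N$ and a reflection identity. What your route buys is a view of the identity as a residue identity, in which only the simple-pole terms $n_p\le (N-1)/2$ survive.

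There are two concrete slips. First, your reflection identity has the wrong sign in the exponent. From $(\zeta)_{N-1}=(\zeta)_{N-1-j}\prod_{l=1}^{j}(1-\zeta^{-l})$ one gets $(\zeta)_j(\zeta)_{N-1-j}=N(-1)^j\zeta^{\binom{j+1}{2}}$; for instance, at $N=3$, $j=1$ one has $(1-\zeta)^2=-3\zeta$. With the correct sign, $\lim_{q\to\zeta}(1-q^N)/(q^2)_{2m}=(1-\zeta)(\zeta)_{2n_p}/(N^2\zeta^{2n_p^2+n_p})$. The exponent then closes to $n_p(n_p+1)+(n_p+1)^2-(2n_p^2+n_p)=2n_p+1$, giving the term $\frac{1-\zeta}{N^2}\,\zeta\cdot\zeta^{2n_p}(\zeta^{n_p+1})_{n_p}$ as you hoped. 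With your sign you would instead get $\zeta^{(2n_p+1)^2}$, and the matching fails.

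Second, your argument says nothing at $N=1$, where $q=1$. There $(q^2)_{N-1}$ and $(q^2)_{2m}$ are empty products, so there is no pole, and your claim that $(q^2)_{2m}$ contains $1-q^N$ when $2m+1\ge N$ is false. Multiplying by $1-q$ gives $0=0$, and the factor $(1-\zeta)/N^2$ you want to cancel is $0$. The case is trivially checked by hand, since $H_p^{(3)}(1)=1=\alpha_0$. It still has to be treated separately, whereas the paper's normalization covers all $N$ uniformly.
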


\begin{theorem} \label{Baileyfamily4} 
	Let $(\alpha_k,\beta_k)$ denote the Bailey pair in \eqref{BP4}.   Then for $p \geq 1$ and $q$ any primitive $N$th root of unity we have
	\begin{equation} \label{Baileyfamily4eq}
		H_p^{(4)}(q)  = \sum_{k=0}^{N-1} (-1)^k q^{\binom{k+1}{2} + (p-1)(k^2+k)} \alpha_k.
	\end{equation} 
\end{theorem}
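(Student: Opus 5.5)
Apply Proposition \ref{keychainlemma} to the Bailey pair \eqref{BP4} (relative to $q$, with $\beta_k = q^{k^2+k}/(q^2)_{2k}$), using $p-1$ iterations rather than $p$. The aim is to choose $n$ so that the left-hand side of \eqref{keychainlemmaeq} becomes $H_p^{(4)}(q)$ at a primitive $N$th root of unity, while the right-hand side collapses to the stated finite sum.

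The natural choice is $n=N$.

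First, look at the left-hand side of \eqref{keychainlemmaeq} with $p$ replaced by $p-1$ and $n=N$. Rename indices so that the innermost variable, $m = N-1-n_{p-1}$, plays the role of $n_p$ in \eqref{h4}. The $\beta$ term is then
\[
q^{m(m+1)}\beta_m = \frac{q^{2m^2+2m}}{(q^2)_{2m}} .
\]
Next, at $q=\zeta_N$, rewrite $1/(q)_{N-1-m}$ using \eqref{qPochn-k} together with $(q)_{N-1}=N$, or more precisely $(q)_{N-1-m}(q^{-m})_m$-type relations, so that $1/(q)_{N-1-m}$ becomes $(q^{m+1})_m$ times a monomial divided by $(q)_{N-1}$.

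More concretely:
\begin{itemize}
\item At a root of unity one has $(q)_{N-1-m}(q^{N-m})_m = (q)_{N-1}$ and $(q^{N-m})_m = (q^{-m})_m$.
\item Hence $1/(q)_{N-1-m} = (q^{-m})_m/(q)_{N-1}$.
\item Also $(q^{-m})_m = (-1)^m q^{-\binom{m+1}{2}}(q)_m$.
\end{itemize}
Combined with $1/(q^2)_{2m}$, this gives $(q)_m/(q^2)_{2m}$. At first this looks unlike $(q^{m+1})_m$. However, the $q$-binomials $\left[\begin{smallmatrix} n_{i+1}\\ n_i\end{smallmatrix}\right]$ at $n=N$ also transform under $n_i \mapsto N-1-n_i$. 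Using $\left[\begin{smallmatrix} N-1-a\\ N-1-b\end{smallmatrix}\right] = \left[\begin{smallmatrix} b\\ a\end{smallmatrix}\right]$ times a sign and monomial at $\zeta_N$, the product chain reverses back to the form in \eqref{h4}. The exponents $q^{(N-n_i-1)(N-n_i)}$ become $q^{m_i(m_i+1)}$.

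Alternatively, and this is cleaner, avoid the reversal entirely. Work directly from the iterated $\beta^{(p-1)}_{n-1}$ before the change of indices, using \eqref{pairdefbis} with $n=N$, and evaluate $(q)_{N-1}(q^2)_{N-1}=N\cdot N/(1-q^{N})\cdots$. Here there is a subtlety: $(q^2)_{N-1}$ contains the factor $1-q^N=0$. So the prefactor $1/((q)_{n-1}(q^2)_{n-1})$ must be handled as a limit, or one must use $n=N-1$ instead.

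I would test small cases to decide which $n$ is correct. The factor $(q^{1-n})_k/(q^{1+n})_k$ at $n=N$ equals $(q)_k/(q)_k \cdot$(sign) in the range $k\le N-1$, since $q^{1-N}=q$ and $q^{1+N}=q$. The right side then reduces to
\[
\frac{1}{(q)_{N-1}(q^2)_{N-1}}\sum_{k=0}^{N-1}(-1)^k q^{Nk-\binom{k+1}{2}+(p-1)(k^2+k)}\alpha_k,
\]
and we have $q^{Nk}=1$. Note $q^{-\binom{k+1}{2}}$ versus the claimed $q^{\binom{k+1}{2}}$. This is reconciled because $\zeta\mapsto\zeta^{-1}$ duality or the exponent sign works out after noting $\binom{k+1}{2}\equiv -\binom{k+1}{2}+k(k+1)$, absorbing one extra $(k^2+k)$. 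So the natural count is $p$ iterations with one of them reabsorbed.

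The main obstacle is this bookkeeping: choosing $n$ (likely $n=N$ with a limit, or multiplying both sides by $(q)_{n-1}(q^2)_{n-1}$ before specializing), matching the outer factor $q^{n_p}(q^{n_p+1})_{n_p}$ exactly, and verifying the normalization constant. I would fix all of this by checking $p=1$ against \eqref{h4} numerically for small $N$.

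The step I expect to be hardest is the left-hand side at the root of unity. Multiplying by $(q)_{N-1}(q^2)_{N-1}$, the left side becomes a sum of terms $(q)_{N-1}(q^2)_{N-1}/((q)_{n_p}(q^2)_{2m})$. These must be shown to equal $q^{n_p}(q^{n_p+1})_{n_p}$ up to the matching monomial, and the terms with $2m\ge N-1$ must be shown to vanish. Both follow from the relation $(x)_{N-1-j}(q^{N-j}x)_{j}=(x)_{N-1}$ at $q=\zeta_N$.
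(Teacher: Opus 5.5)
You have the right skeleton: Proposition~\ref{keychainlemma} applied to \eqref{BP4} with $n=N$, plus the identity $-\binom{k+1}{2}+p(k^2+k)=\binom{k+1}{2}+(p-1)(k^2+k)$ on the right-hand side. But the proposal is not yet a proof, and your treatment of the left-hand side goes wrong.

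First, you need $p$ iterations, not $p-1$. With $p-1$ the left side has only $p-1$ summation variables, while $H_p^{(4)}$ has $p$; your own exponent computation forces $p$.

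More seriously, the outer variable of \eqref{h4} is not the $\beta$-index $m=N-1-n_p$. Taking $m$ as the outer variable leaves you with $(q)_m/(q^2)_{2m}$, essentially the reciprocal of the product you need. Moreover $1/(q^2)_{2m}$ has a pole at $q^N=1$ when $2m+1\ge N$. Reversing the $q$-binomial chain does not repair this: it just undoes the change of indices in the proof of Proposition~\ref{keychainlemma}, and puts the $\beta$-variable at the bottom of the chain instead of the top.

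In fact the $n_i$ in \eqref{keychainlemmaeq} with $n=N$ already are the $n_i$ of \eqref{h4}. At $q^N=1$ one has $q^{(N-n_i-1)(N-n_i)}=q^{n_i(n_i+1)}$, and the factors $\begin{bmatrix} n_{i+1}\\ n_i\end{bmatrix}$ are already in place. The missing step is to rewrite the $\beta$-factor with \eqref{qPochn-k},
\[
\beta_{N-1-n_p}=\frac{(q^{1-2N})_{2n_p}\,q^{4Nn_p-2n_p^2-n_p+(N-1-n_p)(N-n_p)}}{(q^2)_{2N-2}},
\]
and then multiply both sides of \eqref{keychainlemmaeq} by $(q^2)_{2N-2}$ before specializing. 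With $n=N$, no limit is then needed.

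At a primitive $N$th root of unity the pieces then evaluate as follows:
\begin{itemize}
\item $(q^{1-2N})_{2n_p}/(q)_{n_p}=(q^{n_p+1})_{n_p}$.
\item The monomial becomes $q^{2n_p(n_p+1)-2n_p^2-n_p}=q^{n_p}$.
\item The prefactor $(q^2)_{2N-2}/((q)_{N-1}(q^2)_{N-1})=\begin{bmatrix}2N-1\\ N\end{bmatrix}$ equals $1$.
\item $(q^{1-N})_k/(q^{1+N})_k=1$ exactly for $k\le N-1$; there is no sign.
\end{itemize}
The truncation $n_p\le N-1$ is harmless, because the summands of $H_p^{(4)}$ vanish once $2n_p\ge N$. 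This is the paper's argument.

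Your last-paragraph variant, multiplying by $(q)_{N-1}(q^2)_{N-1}$ instead, is equivalent and would also work, but you have its vanishing criterion backwards. With $m=N-1-n_p$, the terms with $2m\ge N-1$ are exactly those in which $(q^2)_{2m}$ contains the factor $1-q^N$ that cancels the zero of $(q^2)_{N-1}$. These terms survive and produce $q^{n_p}(q^{n_p+1})_{n_p}$. The terms with $2m<N-1$, i.e.\ $2n_p\ge N$, are the ones that vanish.
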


\begin{theorem} \label{Baileyfamily5} 
	Let $(\alpha_k,\beta_k)$ denote the Bailey pair in \eqref{BP5}.   Then for $p \geq 1$ and $q$ any primitive $N$th root of unity we have
	\begin{equation} \label{Baileyfamily5eq}
		H_p^{(8)}(q) = -q(1-q)\sum_{k=0}^{N-1} (-1)^k q^{\binom{k+1}{2} +(p-1)(k^2+k)} \alpha_k.
	\end{equation} 
\end{theorem}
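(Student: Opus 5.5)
The plan is to specialize Proposition~\ref{keychainlemma} to the Bailey pair \eqref{BP5} with $n=N$, after first clearing all denominators that vanish at a primitive $N$th root of unity. Because $\beta_k=1/(q)_{2k}$, the left-hand side of \eqref{keychainlemmaeq} has denominators $(q)_{n_p}(q)_{2(n-n_p-1)}$. I would multiply both sides of \eqref{keychainlemmaeq} by $(q)_{n-1}(q^2)_{n-1}$ while $q$ is still generic. The result is an identity of rational functions in $q$, and I would check that each side is regular at $q=\zeta$ when $n=N$.

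On the right-hand side at $n=N$ and $q=\zeta$, one has $q^{Nk}=1$ and $(q^{1-N})_k=(q)_k$. Also $(q^{1+N})_k=(q)_k$, and this is nonzero for $k\le N-1$. Hence the right-hand side becomes
\[
\sum_{k=0}^{N-1}(-1)^k q^{-\binom{k+1}{2}+p(k^2+k)}\alpha_k=\sum_{k=0}^{N-1}(-1)^k q^{\binom{k+1}{2}+(p-1)(k^2+k)}\alpha_k,
\]
which is exactly the sum in \eqref{Baileyfamily5eq}. The $\alpha_k$ in \eqref{BP5} are polynomials in $q$, so no difficulty arises on this side.

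On the left-hand side, put $m=N-1-n_p$. The $p=1$ factor is then
\[
\frac{(q)_{N-1}}{(q)_{n_p}}\cdot\frac{(q^2)_{N-1}}{(q)_{2m}}\, q^{m(m+1)},
\]
and the remaining factors $q^{(N-n_i-1)(N-n_i)}$ reduce to $q^{n_i(n_i+1)}$ modulo $q^N=1$. So the inner product over $i$ already matches that in \eqref{h8}. For the outer factor I will use two facts:
\begin{itemize}
\item the root-of-unity identity $(q)_{N-1-j}=N(-1)^j q^{-\binom{j+1}{2}}/(q)_j$, which follows from \eqref{qPochn-k} with $(q)_{N-1}=N$;
\item the rewriting $(q^2)_{N-1}/(q)_{2m}=\frac{1}{1-q}(q)_N/(q)_{2m}$.
\end{itemize}
There are two cases for the second factor.
\begin{itemize}
\item If $2m\le N$, then $(q)_N/(q)_{2m}$ contains the factor $1-q^N$ and vanishes. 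This matches the vanishing of $(q^{n_p+1})_{n_p+1}$ in $H_p^{(8)}$, since here $n_p+1\le N\le 2n_p+1$.
\item If $2m>N$, then $(q^2)_{N-1}/(q)_{2m}=1/\bigl((1-q)(q^{N+1})_{2m-N}\bigr)=1/\bigl((1-q)(q)_{2m-N}\bigr)$. By the root-of-unity identity with $j=2m-N$, this is $(q)_{2n_p+1}$ up to a factor $N$, a sign and a power of $q$. Combined with $(q)_{N-1}/(q)_{n_p}$, this reproduces $(q^{n_p+1})_{n_p+1}=(q)_{2n_p+1}/(q)_{n_p}$.
\end{itemize}
I also need to know that terms with large $n_p$ vanish, so that $H_p^{(8)}(\zeta)$ is a finite sum over $n_p\le N-1$ and matches the truncated left side exactly. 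This holds because $(q^{n_p+1})_{n_p+1}$ vanishes at $\zeta$ for $n_p\ge (N-1)/2$ up to $n_p\le N-1$, and for larger $n_p$ the $q$-binomials $\begin{bmatrix} n_{i+1}\\ n_i\end{bmatrix}$ also force vanishing at $\zeta$ when $n_p\ge N$.

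The main obstacle is this exact bookkeeping. I have to collect the powers of $N$ (which should cancel between the two sides), the signs $(-1)^m(-1)^{2m-N}$, and the powers of $q$. These are $q^{m(m+1)}$, the factors $q^{-\binom{\cdot+1}{2}}$ from the two reflections, and the constant $\frac{1}{1-q}$. They must combine into $q^{-n_p(n_p+2)}$ times the constant $-q(1-q)$ in \eqref{Baileyfamily5eq}, which I would verify by reducing every exponent modulo $N$. If the $1-q$ factor does not come out correctly from this route, the fallback is to treat the $2m>N$ range by reindexing $j=2m-N$ directly and comparing term by term with $H_p^{(8)}$.
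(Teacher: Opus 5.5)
Your overall route is the paper's: specialize Proposition~\ref{keychainlemma} at $n=N$ to the pair \eqref{BP5} and pass to a primitive $N$th root of unity $\zeta$. Your treatment of the right-hand side is correct. The gap is on the left-hand side, in exactly the bookkeeping you defer, which is the whole content of the theorem.

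First, your reflection identity has the wrong sign in the exponent. Taking \eqref{qPochn-k} with $x=q$, $n=N-1$, $k=j$ and evaluating at $\zeta$ gives
\[
(q)_{N-1-j}=N(-1)^j q^{\binom{j+1}{2}}/(q)_j .
\]
For instance, with $N=3$, $j=1$ one has $(1-\zeta)^2=-3\zeta$, not $-3\zeta^{-1}$. With your version, the $n_p$-th outer factor (for $j=N-2-2n_p$) comes out as $-\frac{q}{1-q}\,q^{3n_p^2+4n_p}(q^{n_p+1})_{n_p+1}$. This is not a constant multiple of $q^{-n_p(n_p+2)}(q^{n_p+1})_{n_p+1}$: the ratio carries $q^{2n_p(2n_p+3)}$, which is not identically $1$ (e.g.\ $N=7$, $n_p=1$). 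So the term-by-term comparison with \eqref{h8} breaks down.

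Second, your case split is off by one. If $N$ is even and $2m=N$ (i.e.\ $n_p=N/2-1$), then $(q)_N/(q)_{2m}=1$ does not vanish. Correspondingly $(q^{n_p+1})_{n_p+1}=(q^{N/2})_{N/2}\neq 0$. This term belongs in your second case with $j=0$, so the split must be $2m<N$ versus $2m\ge N$.

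A smaller point: the $q$-binomials do not force vanishing for $n_p\ge N$. There are none when $p=1$, and $\begin{bmatrix} n\\ n\end{bmatrix}=1$. You do not need them, though, because $(q^{n_p+1})_{n_p+1}$ contains a factor $1-q^{cN}$ for every $n_p\ge (N-1)/2$.

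With these corrections your method closes, but the exponent reduction needs one more piece of care. You must expand $\binom{N-1-2n_p}{2}$ before reducing modulo $N$, rather than replace its argument by $-1-2n_p$. The reason is that $\binom{x+N}{2}=\binom{x}{2}+xN+\binom{N}{2}$ and $\zeta^{\binom{N}{2}}=(-1)^{N+1}$. This parity sign cancels the $(-1)^N$ coming from the reflection. The outer factor then becomes $-\frac{q^{-1}}{1-q}\,q^{-n_p(n_p+2)}(q^{n_p+1})_{n_p+1}$ uniformly in $n_p$. Hence your left side is $-\frac{q^{-1}}{1-q}H_p^{(8)}(\zeta)$, and the theorem follows.

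The paper avoids all of this by reflecting once, before specializing. By \eqref{qPochn-k}, $\beta_{N-1-n_p}=(q^{2-2N})_{2n_p}q^{4Nn_p-2n_p^2-3n_p}/(q)_{2N-2}$, and the paper multiplies \eqref{keychainlemmaeq} by $(q)_{2N-2}$. At $\zeta$ one then has $(q^{2-2N})_{2n_p}=(q^2)_{2n_p}=(q)_{2n_p+1}/(1-q)$ for every $n_p$, so the vanishing terms take care of themselves. The entire constant sits in $\frac{(q)_{2N-2}}{(q)_{N-1}(q^2)_{N-1}}\to\frac{(1-q)(q)_{N-2}}{(q)_{N-1}}=-q$. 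No case split and no parity sign arise.
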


\begin{theorem} \label{Baileyfamily6}
	Let $(\alpha_k,\beta_k)$ denote the Bailey pair in \eqref{BP6}.   Then for $p \geq 1$ and $q$ any primitive $N$th root of unity we have
	\begin{equation} \label{Baileyfamily6eq}
		H_p^{(1)}(q)  = -q^{-1}(1-q)\sum_{k=0}^{N-1} (-1)^k q^{\binom{k+1}{2} +(p-1)(k^2+k)} \alpha_k.
	\end{equation} 
\end{theorem}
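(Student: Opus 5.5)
We apply Proposition \ref{keychainlemma} to the Bailey pair \eqref{BP6}, with the same $p$, and then specialize $n=N$ and $q=\zeta$, a primitive $N$th root of unity. Both sides of \eqref{keychainlemmaeq} have denominators $(q)_{n-1}(q^2)_{n-1}$ or $(q)_{2m}$ that may vanish at $\zeta$. So first multiply \eqref{keychainlemmaeq} through by $(q)_{n-1}(q^2)_{n-1}$ for generic $n$. The right-hand side becomes a finite sum of rational functions of $q$ which are regular at $q=\zeta$ when $n=N$. For each term on the left, regroup the factors as
\[
\frac{(q)_{n-1}}{(q)_{n_p}}=(q^{n_p+1})_{n-1-n_p}
\qquad\text{and}\qquad
\frac{(q^2)_{n-1}}{(q)_{2m}},\quad m=n-1-n_p .
\]
The second quotient is either a polynomial or the reciprocal of a polynomial with no root at $\zeta$, depending on whether $2m\le n$. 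In this form both sides can be evaluated at $q=\zeta$, $n=N$.

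\textbf{Right-hand side.} At $q=\zeta$, $n=N$, the ratio $(q^{1-N})_k/(q^{1+N})_k$ equals $(q)_k/(q)_k=1$ for $0\le k\le N-1$, and $q^{Nk}=1$. The sum therefore collapses to
\[
\sum_{k=0}^{N-1}(-1)^k q^{-\binom{k+1}{2}+p(k^2+k)}\alpha_k=\sum_{k=0}^{N-1}(-1)^kq^{\binom{k+1}{2}+(p-1)(k^2+k)}\alpha_k ,
\]
which is exactly the sum in \eqref{Baileyfamily6eq}.

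\textbf{Product over $i$.} On the left, $q^{(N-n_i-1)(N-n_i)}=q^{n_i(n_i+1)}$ at $\zeta$, so the product over $1\le i\le p-1$ is already the product appearing in \eqref{h1}.

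\textbf{Remaining factor.} It remains to show that, at $q=\zeta$,
\[
(q^{n_p+1})_{N-1-n_p}\,(q^2)_{N-1}\,\frac{q^{m(m+1)}q^{m^2-m}}{(q)_{2m}}=-q^{-1}(1-q)^{-1}\,q^{n_p}(q^{n_p+1})_{n_p+1}
\]
for each admissible $n_p$, with $m=N-1-n_p$. Reducing exponents mod $N$ gives $q^{2m^2}=q^{2(n_p+1)^2}$. The Pochhammer products should be converted with the reflection formula \eqref{qPochn-k} together with $(\zeta)_{N-1}=N$; this gives $(q)_{N-1-k}=N(-1)^kq^{\ast}/(q^2)_k$ for an explicit exponent $q^{\ast}$. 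The aim is to turn $(q)_{N-1}(q^2)_{N-1}/\bigl((q)_{n_p}(q)_{2m}\bigr)$ into $(q)_{2n_p+1}/(q)_{n_p}$ up to explicit powers of $q$ and the constant $-(1-q)$.

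\textbf{Summation ranges.} We must check that the two ranges agree. In $H_p^{(1)}(\zeta)$ the factor $(q^{n_p+1})_{n_p+1}$ contains $1-q^N$ as soon as $n_p+1\le N\le 2n_p+1$, so only $n_p<N/2$ survive, and the terms with $n_p\ge N$ vanish as well. On the Bailey side the sum runs over $0\le n_p\le N-1$. We need the terms with $n_p\ge N/2$, i.e. $2m<N-1$, to vanish there. This should follow because $(q^2)_{N-1}/(q)_{2m}$ still contains the factor $1-q^N$ in that range.

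The main obstacle is this last bookkeeping step: the root-of-unity reflection of the Pochhammer quotients, and tracking the powers of $q$, the sign, and the prefactor $-q^{-1}(1-q)$ (the $(1-q)$ arises from the $(q^2)$ versus $(q)$ normalization). We expect to settle it by direct computation of the exponents, checking a small case such as $N=2,3$ with $p=1$ to fix the signs.
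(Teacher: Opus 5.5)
Your route is the paper's: Proposition \ref{keychainlemma} at $n=N$ for the pair \eqref{BP6}, then the reflection formula \eqref{qPochn-k}, then specialization to $q=\zeta$. The only difference is the normalization. The paper writes $\beta_{N-1-n_p}$ via \eqref{qPochn-k} and clears $(q)_{2N-2}$. Then every left-hand term is regular and tends to $\zeta^{n_p+2}(\zeta^2)_{2n_p}/(\zeta)_{n_p}=\frac{\zeta^2}{1-\zeta}\,\zeta^{n_p}(\zeta^{n_p+1})_{n_p+1}$, so the vanishing ranges match automatically. The single delicate evaluation there is the prefactor $\frac{(q)_{2N-2}}{(q)_{N-1}(q^2)_{N-1}}\to\frac{1-\zeta}{1-\zeta^{-1}}=-\zeta$ (rather than $1$ as in Theorem \ref{Baileyfamily1}); this is where $-q^{-1}(1-q)$ comes from. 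You clear $(q)_{N-1}(q^2)_{N-1}$ instead. The right side then collapses to $\Sigma:=\sum_{k=0}^{N-1}(-1)^k\zeta^{-\binom{k+1}{2}+p(k^2+k)}\alpha_k$, and the delicate evaluation moves into every left-hand term. That is legitimate, just a bit more work.

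The gap is that the step you postpone is the entire proof, and two of the formulas you write down for it are wrong. First, your left side equals $\Sigma$ and \eqref{Baileyfamily6eq} says $H_p^{(1)}(\zeta)=-\zeta^{-1}(1-\zeta)\Sigma$. So each term must tend to $-\frac{\zeta}{1-\zeta}\,\zeta^{n_p}(\zeta^{n_p+1})_{n_p+1}$. Your target has $-\zeta^{-1}(1-\zeta)^{-1}$, which is off by $\zeta^2$, so it cannot be proved. Second, at $\zeta$ the reflection reads $(\zeta)_{N-1-k}=N(-1)^k\zeta^{\binom{k+1}{2}}/(\zeta)_k$, with $(\zeta)_k$ in the denominator, not $(\zeta^2)_k$.

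With these fixed the computation closes quickly. Take $N\ge 2$; the case $N=1$ is trivial, since $H_p^{(1)}(1)=0$ and the right side carries $1-q$. If $2n_p+2\le N$, so $2m\ge N$, cancel $1-q^N$ before specializing: $\frac{(q^2)_{N-1}}{(q)_{2m}}=\frac{1}{(1-q)(q^{N+1})_{2m-N}}\to\frac{1}{(1-\zeta)(\zeta)_{N-2-2n_p}}$. Then with $k=2n_p+1$, $(\zeta)_{N-1}=N$ and $\zeta^{2m^2}=\zeta^{2(n_p+1)^2}$,
\[
\frac{N}{(\zeta)_{n_p}}\cdot\frac{(\zeta)_{2n_p+1}}{(1-\zeta)\,(-N)\,\zeta^{(n_p+1)(2n_p+1)}}\cdot\zeta^{2(n_p+1)^2}=-\frac{\zeta}{1-\zeta}\,\zeta^{n_p}(\zeta^{n_p+1})_{n_p+1}.
\]
If $2n_p+1\ge N$, every $H$-term vanishes. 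So does every Bailey-side term with $n_p\le N-1$: here $2m\le N-1$, so the factor $1-q^N$ of $(q^2)_{N-1}$ is not cancelled. Note this range contains $n_p=(N-1)/2$ for odd $N$, not only $n_p\ge N/2$. Summing gives $-\frac{\zeta}{1-\zeta}H_p^{(1)}(\zeta)=\Sigma$, which is \eqref{Baileyfamily6eq}.
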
 

\begin{theorem} \label{Baileyfamily7}
	Let $(\alpha_k,\beta_k)$ denote the Bailey pair in \eqref{BP7}.   For $p \geq 1$ and $q$ any primitive $N$th root of unity we have
	\begin{equation} \label{Baileyfamily7eq}
		H_p^{(4)}(q)  = q^{p-1}\sum_{k=0}^{N-1} (-1)^k q^{-\binom{k+1}{2}} (q^{p(k^2+2k)} \alpha_k)^*.
	\end{equation}
\end{theorem}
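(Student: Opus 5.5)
Apply Proposition \ref{keychainlemmabis} to the Bailey pair \eqref{BP7}, for which $\beta_k = q^{k^2}/(q^2)_{2k}$, and then specialize $n=N$ with $q$ a primitive $N$th root of unity. The left-hand side of \eqref{keychainlemmabiseq} becomes a multisum. The right-hand side simplifies because, at $n=N$, the ratio $(q^{1-N})_k/(q^{1+N})_k$ equals $1$ for $0\le k\le N-1$: $q^{1-N}=q=q^{1+N}$ at a primitive $N$th root of unity. Hence the right-hand side is
\[
\frac{1}{(q)_{N-1}(q^2)_{N-1}}\sum_{k=0}^{N-1}(-1)^k q^{Nk-\binom{k+1}{2}}\bigl(q^{p(k^2+2k)}\alpha_k\bigr)^*
=\frac{1}{(q)_{N-1}(q^2)_{N-1}}\sum_{k=0}^{N-1}(-1)^k q^{-\binom{k+1}{2}}\bigl(q^{p(k^2+2k)}\alpha_k\bigr)^*.
\]
The prefactor involves $(q)_{N-1}=N$ (more precisely $\prod_{j=1}^{N-1}(1-q^j)=N$) and $(q^2)_{N-1}$, which contains the factor $1-q^N=0$. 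So we cannot set $q$ to a root of unity directly. Instead, I would multiply both sides by $(q)_{N-1}(q^2)_{N-1}$ as polynomial/rational identities in $q$ before specializing.

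Concretely, multiply \eqref{keychainlemmabiseq} with general $n$ by $(q)_{n-1}(q^2)_{n-1}=(q)_{n-1}(q)_n/(1-q)$. On the left, substitute $\beta_{n-n_p-1}=q^{(n-n_p-1)^2}/(q^2)_{2n-2n_p-2}$ and combine it with $(q)_{n-1}(q)_n/((1-q)(q)_{n_p})$. The plan is to show that, with $m=n_p$, the outer summand becomes
\[
q^{(n-m-1)(n-m+1)+(n-m-1)^2}\,\frac{(q)_n(q)_{n-1}}{(1-q)(q)_m(q^2)_{2n-2m-2}}.
\]
At $n=N$ this should reduce to the summand of $H_p^{(4)}$ after the reindexing $n_i\to N-1-n_{p-i+1}$ (reversing the earlier change of variables). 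Two facts about roots of unity drive this reduction. First, $(q)_{N-1-j}=N/\bigl((q^{-1};q^{-1})_j\cdot(\text{root of unity factors})\bigr)$, i.e. $(q)_{N-1-j}(q^{-j};q)_j$-type reflection identities (cf. \eqref{qPochn-k}), which turn $(q)_{N-1}/(q^2)_{2N-2m-2}$ into $(q^{m'+1})_{m'}$ with $m'=N-1-m$. Second, the exponents $(N-n_i-1)(N-n_i+1)$ reduce to $n_i'^2+2n_i'$ with $n_i'=N-1-n_i$, and $n_i'^2+2n_i'\equiv n_i'^2+2n_i'$ must be matched against $n_i'(n_i'+1)$ via the overall power $q^{p-1}$ and the $q$-binomial symmetry $\left[\begin{smallmatrix}N-1-a\\ N-1-b\end{smallmatrix}\right]$, which at a root of unity is $\left[\begin{smallmatrix}b'\\ a'\end{smallmatrix}\right]$ up to powers of $q$. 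Terms with $n_p$ large enough that some factor vanishes should drop out, so the outer sum truncates and matches the finite support of the $q$-binomials in $H_p^{(4)}$, whose summand vanishes at $q=\zeta_N$ once $n_p\ge N$ because $(q^{n_p+1})_{n_p}$ then contains $1-q^N$.

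The main obstacle is the bookkeeping of the powers of $q$. After reindexing, the quadratic exponents $(N-n_i-1)(N-n_i+1)$ together with the powers generated by the reflection identities for the $q$-binomials must collapse exactly to $\sum_{i<p}n_i(n_i+1)+n_p$ times the global factor $q^{p-1}$. Equally delicate is verifying that the factor $(q)_N/(1-q)$, which vanishes at $q=\zeta_N$, correctly cancels against the vanishing of the relevant Pochhammer symbol in $\beta$, rather than killing everything. I would handle this by working with $n=N$ as a formal polynomial identity in $q$ before specializing, and by checking the case $p=1$ first by direct comparison, which also serves as a sanity check against Theorem \ref{Baileyfamily4}, since both theorems give the same left-hand side $H_p^{(4)}$.
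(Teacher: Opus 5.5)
Your route (Proposition \ref{keychainlemmabis} applied to \eqref{BP7}, then $n=N$) is the paper's. However, the step where you try to land on $H_p^{(4)}$ cannot be completed, because the identity as printed is false for $p\ge 2$: $H_p^{(4)}$ is a misprint for $H_p^{(5)}$. The result is actually used for \eqref{main5eq} via \eqref{H5p}, and $H_p^{(4)}$ is already covered by Theorem \ref{Baileyfamily4} with the different pair \eqref{BP4}.

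No reindexing is needed. The substitution $n_i\to n-1-n_{p-i+1}$ is already built into the left side of \eqref{keychainlemmabiseq}, and at $n=N$ one has $(N-n_i-1)(N-n_i+1)=(N-n_i)^2-1\equiv n_i^2-1 \pmod N$. So the inner weights are $q^{n_i^2-1}$, and no $q$-binomial symmetry converts $q^{n_i^2}$ into $q^{n_i(n_i+1)}$.

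Concretely, take $p=2$, $N=3$, $q=\omega=e^{2\pi i/3}$. Only $n_2\le 1$ contributes, and $H_2^{(4)}(\omega)=1+\omega-\omega^2=-2\omega^2$, whereas $H_2^{(5)}(\omega)=\omega^2$. On the other side, \eqref{BP7} gives $\alpha_0=1$, $\alpha_1=-(1+q^2)$, $\alpha_2=0$. Hence $(q^{2(k^2+2k)}\alpha_k)^*=1,\ -q^2-q^6,\ q^{10}$ for $k=0,1,2$, and the right-hand side is $q(1+q+q^5+q^7)$, which equals $\omega^2$ at $q=\omega$. Your planned $p=1$ sanity check would not catch this, since $H_1^{(4)}=H_1^{(5)}$.

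On the vanishing prefactor, the paper's device is simpler than yours. Using \eqref{qPochn-k}, it writes $\beta_{N-1-n_p}$ as in \eqref{betan-1-p3}, with the $n_p$-independent denominator $(q^2)_{2N-2}$, and multiplies through by that. The right-hand prefactor becomes $(q^{N+1})_{N-1}/(q)_{N-1}$, which equals $1$ at a primitive $N$th root of unity. On the left, $(q^{1-2N})_{2n_p}\to(q)_{2n_p}$, and the $n_p$-exponent $(N-n_p)^2-1+4Nn_p-2n_p^2-n_p+(N-n_p-1)^2$ is $\equiv n_p \pmod N$. So the left side is $q^{-(p-1)}H_p^{(5)}(q)$, which gives the corrected theorem.

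Your multiplier $(q)_{N-1}(q^2)_{N-1}$ also works, but you then have to cancel $1-q^N$ term by term. For $2n_p<N$, the factor $\frac{(q)_{N-1}(q)_N}{(1-q)(q)_{n_p}(q^2)_{2N-2n_p-2}}$ tends to $q^{-n_p(2n_p+1)}(q^{n_p+1})_{n_p}$. This is a function of $n_p$ itself, not of $N-1-n_p$ as you suggest. For $2n_p\ge N$ it tends to $0$. Combined with $q^{2n_p^2+2n_p}$ from the exponents, this again gives $q^{n_p}(q^{n_p+1})_{n_p}$, and hence the same conclusion with $H_p^{(5)}$.
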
 

\begin{theorem} \label{Baileyfamily8}
	Let $(\alpha_k,\beta_k)$ denote the Bailey pair in \eqref{BP8}.   For $p \geq 1$ and $q$ any primitive $N$th root of unity we have
	\begin{equation} \label{Baileyfamily8eq}
		H_p^{(9)}(q) = q^{p}\sum_{k=0}^{N-1} (-1)^k q^{-\binom{k+1}{2}} (q^{p(k^2+2k)} \alpha_k)^*.
	\end{equation}
\end{theorem}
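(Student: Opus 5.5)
The plan is to apply Proposition \ref{keychainlemmabis} to the Bailey pair \eqref{BP8}, which is relative to $q^2$ and has $\beta_k = 1/(q^2)_{2k}$. We take $n = N$ and then specialize $q$ to a primitive $N$th root of unity.

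\textbf{Right-hand side.} At $q^N=1$ we have $(q^{1-N})_k/(q^{1+N})_k = (q)_k/(q)_k = 1$ and $q^{Nk}=1$. So, up to the prefactor $1/((q)_{N-1}(q^2)_{N-1})$, the right-hand side of \eqref{keychainlemmabiseq} becomes
\[
\sum_{k=0}^{N-1}(-1)^k q^{-\binom{k+1}{2}}\bigl(q^{p(k^2+2k)}\alpha_k\bigr)^*.
\]
The prefactor is singular, since $(q^2)_{N-1}$ contains the factor $1-q^N$. I would therefore first multiply \eqref{keychainlemmabiseq} through by $(q)_{N-1}(q^2)_{N-1}$ while $q$ is still generic, and only then let $q \to \zeta$. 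After this step the right-hand side is a finite sum with no poles, equal to the displayed sum.

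\textbf{Left-hand side.} Write $m=n_p$. The $m$-term carries the factor
\[
\frac{(q)_{N-1}(q^2)_{N-1}}{(q)_m\,(q^2)_{2(N-m-1)}}.
\]
If $2(N-m-1) < N-1$, the factor $1-q^N$ in $(q^2)_{N-1}$ is not cancelled, so the term vanishes at the root of unity. If $2(N-m-1)\ge N-1$, the zero cancels, and the remaining ratio of $q$-Pochhammer symbols must be rewritten using $(x)_{a}(xq^{a})_{b}=(x)_{a+b}$ and \eqref{qPochn-k} at $q^N=1$. The goal of this rewriting is to produce exactly $(q^{m'+1})_{m'}$, where $m'$ is the reflected index. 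This matches the fact that $(q^{n+1})_n$ in $H_p^{(9)}$ vanishes at $\zeta$ once $2n\ge N$, so both sides naturally truncate to the same range.

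\textbf{Exponents and reindexing.} The exponents reduce modulo $N$: $(N-n_i-1)(N-n_i+1)\equiv n_i^2-1$. Reversing the chain of indices $n_i$ turns the exponent $q^{(N-n_i-1)(N-n_i+1)}$ into $q^{n_i^2-1}$, while the $q$-binomials $\left[\begin{smallmatrix} n_{i+1}\\ n_i\end{smallmatrix}\right]$ keep their form. This should reproduce the summand of \eqref{h9}, namely $q^{-n_p^2-n_p}(q^{n_p+1})_{n_p}\prod_{i=1}^{p-1} q^{n_i^2}\left[\begin{smallmatrix} n_{i+1}\\ n_i\end{smallmatrix}\right]$. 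The shifts $-1$ from the $p-1$ product exponents and the one from the $n_p$ exponent together give the global factor $q^{-p}$. Moving it to the other side gives the $q^{p}$ in \eqref{Baileyfamily8eq}. The constant coming from the normalization should be absorbed by the same evaluation as in the right-hand side.

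\textbf{Main obstacle.} The hard step is the careful limit or residue bookkeeping for the $(1-q^N)$ factors. One must verify that exactly the terms with $2n_p < N$ survive, and that the surviving Pochhammer ratio equals $(q^{n_p+1})_{n_p}\,q^{-n_p^2-n_p}$ times the correct power of $q$, with all normalizing constants (such as $(q)_{N-1}=N$) cancelling on both sides. I would do this by first treating the case $p=1$ in full and checking it numerically for small $N$. The general case then only adds the chain of $q$-binomials, which is unaffected by the limit.
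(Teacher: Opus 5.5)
Your proposal follows the paper's own argument: apply Proposition \ref{keychainlemmabis} with $n=N$ to the pair \eqref{BP8} and specialize to a primitive $N$th root of unity. The paper handles the singular prefactor by writing $\beta_{N-1-n_p}=(q^{1-2N})_{2n_p}q^{4Nn_p-2n_p^2-n_p}/(q^2)_{2N-2}$ via \eqref{qPochn-k} and noting $(q^2)_{2N-2}/\bigl((q)_{N-1}(q^2)_{N-1}\bigr)\to 1$, which is equivalent to your clearing of denominators. One correction: you should not reflect or reverse the summation indices, because Proposition \ref{keychainlemmabis} already has them in final position. For $2n_p<N$ your factor reduces at $q^N=1$ to $(q)_{N-1}/\bigl((q)_{N-1-2n_p}(q)_{n_p}\bigr)=q^{-2n_p^2-n_p}(q^{n_p+1})_{n_p}$ with the same $n_p$, the exponents $(N-n_i-1)(N-n_i+1)$ simply reduce to $n_i^2-1$, and no stray constants appear.
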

For the last two results we define
\begin{equation} \label{alphatilde}
\widehat{\alpha}_k = \frac{(1-q)}{1-q^{k+1}}\alpha_k.
\end{equation}

\begin{theorem} \label{Baileyfamily9}
	Let $(\alpha_k,\beta_k)$ denote the Bailey pair in \eqref{BP9}.  For $p \geq 1$ and $q$ any primitive $N$th root of unity we have
	\begin{equation} \label{Baileyfamily9eq}
		\begin{aligned}
			-\frac{1}{2} + H_p^{(2)}(q)
			&= q^{p}\left(\sum_{k=0}^{N-2} (-1)^{k} q^{-\binom{k+1}{2} + p(k^2+2k)} \widehat{\alpha}_k + \frac{1}{2} q^{-p}\widehat{\alpha}_{N-1}\right).
		\end{aligned}
	\end{equation}
\end{theorem}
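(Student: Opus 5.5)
The plan is to apply Proposition \ref{keychainlemmater} to the Bailey pair \eqref{BP9} relative to $q^2$. I will treat $n$ as a formal parameter first, clear denominators, and only then specialize $n=N$ and $q=\zeta$, where $\zeta$ is a primitive $N$th root of unity.

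\textbf{Step 1: clear denominators and specialize.} Multiply \eqref{keychainlemmatereq} by $(q)_{n-1}(q)_n/(1-q^2)$, which makes both sides polynomials in $q$. On the right, set $n=N$. At $q=\zeta$ we have $q^{1-N}=q$ and $q^{2+N}=q^2$, so
\[
\frac{(q^{1-N})_k}{(q^{2+N})_k}=\frac{(q)_k}{(q^2)_k}=\frac{1-q}{1-q^{k+1}}.
\]
Also $q^{Nk}=1$. Using \eqref{alphatilde}, each summand on the right becomes $(-1)^kq^{-\binom{k+1}{2}+p(k^2+2k)}\widehat{\alpha}_k$ for $0\le k\le N-1$. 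This matches \eqref{Baileyfamily9eq} except for the weight $\tfrac12$ on the $k=N-1$ term and the overall factor $q^p$. I expect both to come out of the left side, since the right side is otherwise regular.

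\textbf{Step 2: rewrite the left side.} Reverse the indices, $n_i\mapsto n-1-n_i$ as in the proof of Proposition \ref{keychainlemma}. Then insert $\beta_m=q^{m^2+m}(1+q)/\big((q^2)_{2m}(1+q^{m+1})\big)$ and combine it with $(q)_{n-1}(q)_n/(1-q^2)$. The outer factor becomes
\[
\frac{(q)_{N-1}}{(q)_{N-1-m}}\cdot\frac{(q)_N}{(q)_{2m+1}}\cdot\frac{1+q}{1+q^{m+1}}.
\]
At $q=\zeta$ the first ratio is $(q^{-m})_m$ up to a monomial, and the second vanishes unless $2m+1\ge N$. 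On the $H_p^{(2)}$ side, $(q^{n})_{n}=(q)_{2n-1}/(q)_{n-1}$ vanishes at $\zeta$ whenever $n\le N\le 2n-1$ or $n\ge N$. So both sides reduce to finite sums, and I would match them term by term, using that $q$-binomials at roots of unity factor via $q$-Lucas. The inner exponents $q^{(n-n_i-1)(n-n_i+1)}$ reduce modulo $N$ to $q^{n_i^2+2n_i}$, which must be converted into the $q^{n_i^2}$ of \eqref{h2}. I would try the substitution $n_i\mapsto N-1-n_i$ to absorb the linear terms, with the leftover powers producing the prefactor $q^p$.

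\textbf{Main obstacle.} The hardest step is the boundary terms, namely the indices near $m=N-1$ and the $n_p=0$ term of $H_p^{(2)}$. There $(1+q^{m+1})$ evaluates to $1+q^N=2$, which is exactly where the factors $\tfrac12$, the constant $-\tfrac12$, and the halved $\widehat{\alpha}_{N-1}$ originate. To handle this carefully I would not set $q=\zeta$ directly. Instead I would put $q=\zeta e^{-t}$ and let $t\to0$, tracking the $0/0$ quotients $(q)_N/(q)_{2m+1}$ by L'Hôpital (each behaves like $N t$ over $N t$). Alternatively I would compare both sides numerically for small $N$ and $p$ to pin down the correct boundary bookkeeping before writing the general argument.
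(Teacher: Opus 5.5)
Your overall route is the paper's: apply Proposition~\ref{keychainlemmater} to \eqref{BP9} with $n=N$, then let $q$ tend to a primitive $N$th root of unity $\zeta$. However, Step~1 makes an error at exactly the delicate point, and this sends the rest of your plan in the wrong direction.

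The replacement $\frac{(q^{1-N})_k}{(q^{2+N})_k}\to\frac{(q)_k}{(q^2)_k}=\frac{1-q}{1-q^{k+1}}$ is legitimate only for $k\le N-2$. For $k=N-1$ two factors vanish at $\zeta$: the denominator $(q^{2+N})_{N-1}$ contains $1-q^{2N}$, and $\alpha_{N-1}$ contains $(1-q^{N})/(1-q)$. Setting $q^{2+N}=q^2$ first silently replaces $1-q^{2N}$ by $1-q^N$ inside a $0/0$ quotient. Keeping the two factors together gives, for $N\ge 2$,
\begin{equation*}
\frac{(q^{1-N})_{N-1}}{(q^{2+N})_{N-1}}\,\alpha_{N-1}
=\frac{(q^{1-N})_{N-1}}{(1-q)(q^{2+N})_{N-2}}\cdot\frac{1-q^{N}}{1-q^{2N}}\,\widehat{\alpha}_{N-1}
\longrightarrow \frac{1}{2}\,\widehat{\alpha}_{N-1}(\zeta).
\end{equation*}
Here $\frac{1-q^N}{1-q^{2N}}=\frac{1}{1+q^N}\to\frac12$, and the first fraction tends to $\frac{(\zeta)_{N-1}}{(1-\zeta)(\zeta^2)_{N-2}}=1$. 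Combined with $(-1)^{N-1}\zeta^{-\binom{N}{2}+p(N^2-1)}=\zeta^{-p}$ (using $\zeta^{-\binom N2}=(-1)^{N+1}$), this is precisely the term $\frac12 q^{-p}\widehat{\alpha}_{N-1}$ in \eqref{Baileyfamily9eq}.

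So the right side is not ``otherwise regular'', and the halved $\widehat{\alpha}_{N-1}$ comes from the right side, which is how the paper obtains it. The left side produces only the factor $q^{-p}$ and the constant $-\frac12$, the latter from the $n_p=0$ term where $1+q^{N}\to 2$. It contains nothing involving $\widehat{\alpha}_{N-1}$, and $\widehat{\alpha}_{N-1}(\zeta)\neq 0$ in every residue class of $N$ modulo $3$. Followed as written, your Step~1 gives full weight to $\widehat{\alpha}_{N-1}$, which is a false identity, and you would then search the left side for a compensating term that does not exist. The $t\to 0$ limit in your last paragraph is the right tool, but it must be applied on the right-hand side at $k=N-1$.

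Your Step~2 has a similar slip on the left. The quotient $(q)_N/(q)_{2m+1}$ does vanish at $\zeta$ when $2m+1<N$. But if $N$ is even and $m=N/2-1$, it equals $1-q^{N}$, which cancels against $1+q^{N/2}=1+q^{m+1}$ in the denominator, leaving $1-q^{N/2}\to 2$. These terms, which correspond to $n_p=N/2$, survive, consistent with $(\zeta^{N/2})_{N/2}\neq 0$ in $H_p^{(2)}(\zeta)$. Discarding them would break your term-by-term matching. Two smaller slips: the factor $1+q$ in your outer factor cancels against $(1-q^2)/(1-q)$, and the cleared sides are rational functions, not polynomials.

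The paper's computation of the left side is shorter and needs neither index reversal nor $q$-Lucas. At $n=N$ one has $q^{(N-n_i-1)(N-n_i+1)}=q^{n_i^2-1}$. Writing $\beta_{N-1-n_p}$ via \eqref{qPochn-k} moves the constant $1/(q^2)_{2N-2}$ to the right, where $\frac{(q)_{2N-1}}{(q)_{N-1}(q)_N}\to 1$. The left side at $\zeta$ then becomes
\begin{equation*}
q^{-p}\sum_{n_p\geq\cdots\geq n_1\geq 0}\frac{q^{n_p}(q^{n_p+1})_{n_p}}{1+q^{n_p}}\prod_{i=1}^{p-1}q^{n_i^2}\begin{bmatrix} n_{i+1}\\ n_i\end{bmatrix}.
\end{equation*}
The identity $\frac{(q^{n+1})_n}{1+q^n}=(q^n)_n$ for $n\geq 1$, with value $\frac12$ at $n=0$, gives $q^{-p}\bigl(-\frac12+H_p^{(2)}(q)\bigr)$. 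It also handles the even-$N$ cancellation automatically.
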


\begin{theorem} \label{Baileyfamily10}
	Let $(\alpha_k,\beta_k)$ denote the Bailey pair in \eqref{BP10}.  For $p \geq 1$ and $q$ any primitive $N$th root of unity we have
	\begin{equation} \label{Baileyfamily10eq}
		\begin{aligned}
			-\frac{1}{2} + H_p^{(10)}(q)
			&= q^{p}\left(\sum_{k=0}^{N-2} (-1)^{k} q^{-\binom{k+1}{2} + p(k^2+2k)} \widehat{\alpha}_k + \frac{1}{2} q^{-p}\widehat{\alpha}_{N-1}\right).
		\end{aligned}
	\end{equation}
\end{theorem}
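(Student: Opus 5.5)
The plan is to specialize Proposition \ref{keychainlemmater} to the Bailey pair \eqref{BP10} with $n=N$, and then let $q$ tend to a primitive $N$th root of unity $\zeta$. Both sides of \eqref{keychainlemmatereq} are rational functions of $q$, so the first task is to clear the denominators that vanish at $\zeta$. The two sides are treated separately below.

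\emph{Right-hand side.} With $n=N$ we have $q^{Nk}\to1$, $(q^{1-N})_k\to(q)_k$ and $(q^{2+N})_k\to(q^2)_k$. Hence
\[
\frac{(1-q^2)(q^{1-N})_k}{(q^{2+N})_k}\to \frac{(1-q^2)(1-q)}{1-q^{k+1}}
\]
for $0\le k\le N-2$. This produces exactly $\widehat{\alpha}_k$ from \eqref{alphatilde}, together with the factor $(-1)^kq^{-\binom{k+1}{2}+p(k^2+2k)}$. The prefactor $1/((q)_{N-1}(q)_N)$ is singular at $\zeta$, so I would multiply the whole identity by $(q)_{N-1}(q)_N$ before passing to the limit, or equivalently isolate the single factor $1-q^N$. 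In the resulting limit the term $k=N-1$ behaves differently: there $(q^{2+N})_{N-1}$ contains $1-q^{2N}$ while the numerator contains $1-q^{N}$. Their ratio tends to $1/(1+q^N)=1/2$, and I expect this to account for the boundary term $\tfrac12 q^{-p}\widehat{\alpha}_{N-1}$.

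\emph{Left-hand side.} Substitute $\beta_m=(1+q)/\bigl((q^2)_{2m}(1+q^{m+1})\bigr)$ with $m=N-1-n_p$, and multiply by the same normalizing factor. The inner factors $q^{(N-n_i-1)(N-n_i+1)}$ equal $q^{n_i^2-1}$ at $\zeta$ (since $q^N=1$), so the inner product becomes $\prod_i q^{n_i^2}\begin{bmatrix} n_{i+1}\\ n_i\end{bmatrix}$, up to a power of $q$. For the outer factor I would use \eqref{qPochn-k}-type reflections at $q=\zeta$ to rewrite $(q)_{N-1}(q)_N/\bigl((q)_{n_p}(q^2)_{2N-2-2n_p}\bigr)$. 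The target is $q^{-n_p^2}(q^{n_p})_{n_p}$ times powers of $q$ that cancel against the inner products and against the overall $q^p$. Terms with $2n_p\ge N+1$ should vanish on both sides, since $(q^{n_p})_{n_p}$ contains $1-q^N$ there. The factor $(1+q)/(1+q^{N-n_p})$ requires care. For $n_p=0$ it tends to $(1+q)/2$, and this should produce the additive constant $-\tfrac12$; for the remaining $n_p$ the factor should combine with the reflected Pochhammers.

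The main obstacle is the left-hand side, specifically making the limit rigorous while tracking every power of $q$ and sign. At $\zeta$ both $1/(q^2)_{2m}$ and the prefactor $1/(q)_N$ are singular, so one cannot simply evaluate. I would first multiply the identity by $1-q^N$ and take the limit $q\to\zeta$ term by term, using $\lim (1-q^N)/(1-q^{N+j})$ and $(q)_{N-1}\to N$ (more precisely, the limit of $(q)_{N-1}$ at a primitive $N$th root). I would then verify the matching against $H_p^{(10)}(\zeta)$ in the small cases $p=1$ and $N\le 3$ before running the general bookkeeping. If the direct reflection becomes too messy, a fallback is to run the same limiting argument for Theorem \ref{Baileyfamily9} and then transfer it via the duality $q\mapsto q^{-1}$, under which \eqref{BP10} is the dual of \eqref{BP9} and $H^{(10)}_p$ corresponds to $H^{(2)}_p$ at conjugate roots of unity.
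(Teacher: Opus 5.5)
Your main line is essentially the paper's proof. You apply Proposition~\ref{keychainlemmater} with $n=N$ to \eqref{BP10}. The boundary factor $\frac12$ comes from $(1-q^N)/(1-q^{2N})$ at $k=N-1$, together with $(-1)^{N-1}\zeta^{-\binom{N}{2}+pN^2}=1$. The constant $-\frac12$ comes from the $n_p=0$ term, where $1/(1+q^N)\to\frac12$ replaces the summand $1$ of $H_p^{(10)}$. The paper clears the pole by multiplying by $(q^2)_{2N-2}/(1+q)$, so the right-hand prefactor becomes $\begin{bmatrix} 2N-1 \\ N-1 \end{bmatrix}$, which equals $1$ at $\zeta$. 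It also rewrites $1/(q^2)_{2N-2-2n_p}$ as in \eqref{betan-1-p1}.

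Some points need repair.

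First, your normalizer $(q)_{N-1}(q)_N$ still contains the factor $1-q^2$ that you moved into the summand, and this factor vanishes at $\zeta$ when $N\le 2$. For $N=2$ both sides are already finite at $q=-1$, so you would only obtain $0=0$. Use $(q)_{N-1}(q)_N/(1-q^2)$ instead; this is the paper's normalizer divided by that $q$-binomial.

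Second, for even $N\ge 4$ the term $n_p=N/2$ is also singular, through $1+q^{N-n_p}$. Cancelling it against the $1-q^N$ in $(q)_N$ gives $1-q^{N/2}\to 2$. This reproduces the corresponding term $\zeta^{-n_p^2}(\zeta^{n_p};\zeta)_{n_p}$ of $H_p^{(10)}$, so treat this term separately from the generic $n_p$.

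Third, drop the duality fallback. Iterating the Bailey lemma does not commute with $q\mapsto q^{-1}$, and $H_p^{(10)}(\zeta)$ is not $H_p^{(2)}(\bar\zeta)$: for $N=3$, $p=1$ these are $\zeta^2$ and $-2\zeta$.

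Finally, for $N=1$ the product $(q^{2+N})_{N-1}$ is empty, so there is no boundary factor. Moreover, the prefactor in Proposition~\ref{keychainlemmater} should read $1/\bigl((q)_{n-1}(q^3)_{n-1}\bigr)$: at $n=1$ the left side is $\beta_0=\alpha_0$, while the printed right side is $(1+q)\alpha_0$. The corrected prefactor differs from the printed one by $(1-q)/(1-q^{n+1})$, which tends to $1$ at $\zeta$ only for $N\ge 2$. So verify $N=1$ directly; there both sides of \eqref{Baileyfamily10eq} equal $\frac12$.
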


\begin{proof}[Proof of Theorem \ref{Baileyfamily1}]
Let $(\alpha_k,\beta_k)$ denote the Bailey pair in \eqref{BP1}.   Using \eqref{qPochn-k} we compute that
\begin{equation} \label{betan-1-p1}
\beta_{N-1-n_p} = \frac{(q^{1-2N})_{2n_p} q^{4Nn_p-2n_p^2-n_p}}{(q^2)_{2N-2}}.
\end{equation}
Inserting this into \eqref{keychainlemmaeq} we obtain
\begin{equation}
\begin{aligned}
\sum_{N-1 \geq n_p \geq \cdots \geq n_1\geq 0}& \frac{q^{(N-n_p-1)(N-n_p)}(q^{1-2N})_{2n_p} q^{4Nn_p-2n_p^2-n_p}}{(q)_{n_p}} \prod_{i=1}^{p-1} q^{(N-n_i-1)(N-n_i)} \begin{bmatrix} n_{i+1} \\ n_i \end{bmatrix} \\
&= \frac{(q^2)_{2N-2}}{(q)_{N-1}(q^2)_{N-1}} \sum_{k=0}^{N-1} \frac{(q^{1-N})_k}{(q^{1+N})_k}(-1)^k q^{Nk -\binom{k+1}{2} + p(k^2+k)} \alpha_k.
\end{aligned}
\end{equation}
Now we let $q$ be a primitive $N$th root of unity.   The terms $\frac{(q^2)_{2N-2}}{(q)_{N-1}(q^2)_{N-1}}$ and $\frac{(q^{1-N})_k}{(q^{1+N})_k}$ become $1$, as do all other instances of $q^N$.   The result is \eqref{Baileyfamily1eq}.
\end{proof}

\begin{proof}[Proofs of Theorems \ref{Baileyfamily2}--\ref{Baileyfamily6}] 
The proofs of Theorems \ref{Baileyfamily2}--\ref{Baileyfamily6} closely resemble the proof of Theorem \ref{Baileyfamily1}.   The only difference is the initial Bailey pair $(\alpha_k,\beta_k)$.  Therefore we limit ourselves to noting the expression for $\beta_{N-1-n_p}$ in each case.   For the Bailey pairs in \eqref{BP2}--\eqref{BP6}, we have, respectively,
\begin{equation}
\beta_{N-1-n_p} = \frac{(q^{1-2N})_{2n_p} q^{4Nn_p-2n_p^2-2n_p + N-1}}{(q^2)_{2N-2}},
\end{equation} 
\begin{equation} \label{betan-1-p3}
\beta_{N-1-n_p} = \frac{(q^{1-2N})_{2n_p} q^{4Nn_p-2n_p^2-n_p + (N-n_p-1)^2}}{(q^2)_{2N-2}},
\end{equation} 
\begin{equation}
\beta_{N-1-n_p} = \frac{(q^{1-2N})_{2n_p} q^{4Nn_p-2n_p^2-n_p + (N-n_p-1)^2 + (N-n_p-1)}}{(q^2)_{2N-2}},
\end{equation} 
\begin{equation}
\beta_{N-1-n_p} = \frac{(q^{2-2N})_{2n_p} q^{4Nn_p-2n_p^2-3n_p}}{(q)_{2N-2}}
\end{equation}
and
\begin{equation}
\beta_{N-1-n_p} = \frac{(q^{2-2N})_{2n_p} q^{4Nn_p-2n_p^2-3n_p + (N-1-n_p)^2 - (N-1-n_p)}}{(q)_{2N-2}}.
\end{equation}
Using these in \eqref{keychainlemma} and calculating as in the proof of Theorem \ref{Baileyfamily1} we obtain \eqref{Baileyfamily2eq}--\eqref{Baileyfamily6eq}.
\end{proof}

The proofs of Theorems \ref{Baileyfamily7} and \ref{Baileyfamily8} are similar, the only difference being the use of Proposition \ref{keychainlemmabis} instead of Proposition \ref{keychainlemma}.

\begin{proof}[Proofs of Theorems \ref{Baileyfamily7}--\ref{Baileyfamily8}]
Let $(\alpha_k,\beta_k)$ denote the Bailey pair in \eqref{BP7}.   Note that the $\beta_k$ is the same as in \eqref{BP3}, and so $\beta_{N-1-n_p}$ is given by \eqref{betan-1-p3}.   Using this in \eqref{keychainlemmabiseq} we have
\begin{equation}
\begin{aligned}
\sum_{N-1 \geq n_p \geq \cdots \geq n_1\geq 0}& \frac{q^{(N-n_p)^2-1} (q^{1-2N})_{2n_p} q^{4Nn_p-2n_p^2-n_p + (N-n_p-1)^2}}{(q)_{n_p}} \prod_{i=1}^{p-1} q^{(N-n_i-1)(N-n_i+1)} \begin{bmatrix} n_{i+1} \\ n_i \end{bmatrix} \\
&= \frac{(q^2)_{2N-2}}{(q)_{N-1}(q^2)_{N-1}} \sum_{k=0}^{N-1} \frac{(q^{1-N})_k}{(q^{1+N})_k}(-1)^k q^{Nk -\binom{k+1}{2}} (q^{p(k^2+2k)} \alpha_k)^*.
\end{aligned}
\end{equation} 	
Letting $q$ be a primitive $N$th root of unity gives \eqref{Baileyfamily7eq}.   Theorem \ref{Baileyfamily8} follows in the same way using the Bailey pair in \eqref{BP8}.
\end{proof}

\begin{proof}[Proofs of Theorems \ref{Baileyfamily9}--\ref{Baileyfamily10}]
For Theorem \ref{Baileyfamily9} we use \eqref{BP9} in Proposition \ref{keychainlemmater} to obtain
\begin{equation} \label{obtained9}
\begin{aligned}
\sum_{N-1 \geq n_p \geq \cdots \geq n_1\geq 0}& \frac{q^{(N-n_p-1)(N-n_p+1)} q^{2Nn_p - n_p^2 + N^2-N}(q^{1-2N})_{2n_p}}{(1+q^{N-n_p})(q)_{n_p}} \prod_{i=1}^{p-1} q^{(N-n_i-1)(N-n_i+1)} \begin{bmatrix} n_{i+1} \\ n_i \end{bmatrix} \\
&= \frac{(q)_{2N-1}}{(q)_{N-1}(q)_{N}} \sum_{k=0}^{N-1} \frac{(q^{1-N})_k}{(q^{2+N})_k}(-1)^k q^{Nk -\binom{k+1}{2} + p(k^2+2k)} \alpha_k.
\end{aligned}
\end{equation} 
Letting $q$ be a primitive $N$th root of unity the left-hand side of (\ref{obtained9}) becomes 
\begin{equation} \label{next}
q^{-p}\sum_{n_p \geq \cdots \geq n_1 \geq 0} \frac{q^{n_p}(q^{n_p+1})_{n_p}}{(1+q^{n_p})} \prod_{i=1}^{p-1} q^{n_i^2} \begin{bmatrix} n_{i+1} \\ n_i \end{bmatrix}.
\end{equation}
Note that
\begin{equation}
\frac{(q^{n_p+1})_{n_p}}{(1+q^{n_p})} = 
\begin{dcases}
(q^{n_p})_{n_p} & \text{if $n_p > 0$} \\
\frac{1}{2} & \text{if $n_p = 0$}
\end{dcases}
\end{equation}
and so (\ref{next}) when $q$ is a primitive $N$th root of unity is
\begin{equation}
q^{-p}\left(-\frac{1}{2} + \sum_{n_p \geq \cdots \geq n_1 \geq 0} q^{n_p}(q^{n_p})_{n_p} \prod_{i=1}^{p-1} q^{n_i^2} \begin{bmatrix} n_{i+1} \\ n_i \end{bmatrix} \right).
\end{equation}

Now letting $q$ be a primitive $N$th root of unity on the right-hand side of \eqref{obtained9}, the prefactor becomes $1$ as usual.   As for the term $\frac{(q^{1-N})_k}{(q^{2+N})_k}$, it does not become $1$ as before.  But when multiplied by $(1-q^{k+1})/(1-q)$, it becomes $1$ for $k < N-1$ while for $k = N-1$ it is $\frac{(1-q^N)}{(1-q^{2N})} \to \frac{1}{2}$.   This observation accounts for the use of $\widehat{\alpha}_k$ and the splitting of the sum on the right-hand side of \eqref{Baileyfamily9eq}. Here, we also use that $q^{-\binom{N}{2} + pN^2} = (-1)^{N+1}$. This gives Theorem \ref{Baileyfamily9}.    Theorem \ref{Baileyfamily10} follows in the same way using \eqref{BP10} in Proposition \ref{keychainlemmater}.
\end{proof}

\section{Incomplete quadratic Gauss sums}

\subsection{Properties for incomplete quadratic Gauss sums}

We begin with a general result on an incomplete quadratic Gauss sum. For simplicity, we write $\chi = \chi_{\bm{p}}^{\bm{\ell}}$. Recall that $P:=p_1 p_2 p_3$.

\begin{lemma} \label{chiprop} Assume that
\begin{enumerate}
\item[(i)] $P \equiv 2 \pmod{4}$, \\
\item[(ii)] if $\chi(n) \neq 0$, then $n$ is odd \\
\item[(iii)] $\chi$ is even modulo $P$, i.e., $\chi(P-n) = \chi(n)$.    
\end{enumerate} 
If $q$ is a primitive $N$th root of unity, then 
\begin{equation*}
2\sum_{n=0}^{PN} \left(1-\frac{n}{PN}\right) \chi(n) q^{\frac{n^2}{4P}} =  \sum_{n=0}^{PN} \chi(n) q^{\frac{n^2}{4P}}.
\end{equation*}
\end{lemma}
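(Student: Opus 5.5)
The plan is to show that the weighted sum $\sum_{n=0}^{PN} \frac{n}{PN}\chi(n) q^{\frac{n^2}{4P}}$ equals exactly half of the unweighted sum. Then
\[
2\sum_{n=0}^{PN}\Bigl(1-\tfrac{n}{PN}\Bigr)\chi(n)q^{\frac{n^2}{4P}} = 2S - S = S,
\qquad S:=\sum_{n=0}^{PN}\chi(n)q^{\frac{n^2}{4P}}.
\]
The natural tool is the reflection $n \mapsto PN - n$ on $\{0,\dots,PN\}$. We pair each term with its reflection, so that $\frac{n}{PN} + \frac{PN-n}{PN} = 1$. It then suffices to prove that the summand $f(n) := \chi(n)q^{\frac{n^2}{4P}}$ is invariant under this reflection, i.e. $f(PN-n) = f(n)$.

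Given the invariance, write $T = \sum_n \frac{n}{PN} f(n)$. Substituting $n \to PN-n$ gives $T = \sum_n \frac{PN-n}{PN} f(n) = S - T$, so $T = S/2$ and the lemma follows.

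To establish the invariance, I will check the character and the exponential factor separately.

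\emph{The character.} Since $\chi$ is periodic modulo $2P$ and even modulo $P$ by (iii), I need $\chi(PN-n) = \chi(n)$.
If $N$ is odd, then $PN - n \equiv P - n \pmod{2P}$, because $P(N-1) \equiv 0 \pmod{2P}$. Hence $\chi(PN-n) = \chi(P-n) = \chi(n)$ by (iii).
If $N$ is even, then $PN - n \equiv -n \pmod{2P}$, so by oddness $\chi(PN-n) = -\chi(n)$. This sign must then be compensated by the exponential factor.

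\emph{The exponential.} We have $(PN-n)^2 = P^2N^2 - 2PNn + n^2$, so
\[
q^{\frac{(PN-n)^2}{4P}} = q^{\frac{n^2}{4P}}\, q^{\frac{PN^2}{4}}\, q^{-\frac{Nn}{2}}.
\]
Here fractional powers are interpreted as $q^{x} = \zeta_N^{Mx}$, i.e. via $e^{2\pi i \tau x}$ with $\tau = M/N$.

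This is where the main obstacle lies: the careful evaluation of the two root-of-unity factors.
\begin{itemize}
\item The factor $q^{-Nn/2} = e^{-\pi i M n} = (-1)^{Mn}$. Only odd $n$ contribute, by (ii), so this equals $(-1)^M$.
\item The factor $q^{PN^2/4} = e^{\pi i M P N/2}$. Writing $P = 2P'$ with $P'$ odd, by (i), this becomes $e^{\pi i M P' N} = (-1)^{MN}$.
\end{itemize}
So the total multiplier is $(-1)^{M(N+1)}$.
If $N$ is odd, this is $+1$, which matches $\chi(PN-n) = \chi(n)$.
If $N$ is even, then $M$ is odd because $\gcd(M,N)=1$, so the multiplier is $(-1)$. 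This cancels the sign $\chi(PN-n) = -\chi(n)$.

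In both cases $f(PN-n) = f(n)$, which completes the plan. The only delicate points are the branch of $q^{x/4P}$ and the parity bookkeeping; both are handled by using (i) and (ii) together with the coprimality of $M$ and $N$.
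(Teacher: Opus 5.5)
Your proof is correct and follows essentially the same route as the paper. Both arguments pair $n$ with $PN-n$ and show that $\chi(n)q^{\frac{n^2}{4P}}$ is invariant under this reflection, using a case split on the parity of $N$ together with (i)--(iii) and the oddness and $2P$-periodicity of $\chi$; the only cosmetic difference is that the paper treats the combined factor $q^{\frac{N(PN-2n)}{4}}$ via the residue of $PN-2n$ modulo $4$, whereas you split it as $q^{\frac{PN^2}{4}}q^{-\frac{Nn}{2}}$ and evaluate each piece with the branch $q^x=e^{2\pi i Mx/N}$ and $\gcd(M,N)=1$.
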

      
\begin{proof}
We have 
\begin{align}
2\sum_{n=0}^{PN}& \left(1-\frac{n}{PN}\right) \chi(n) q^{\frac{n^2}{4P}} \nonumber \\
&= \sum_{n=0}^{PN} \left(1-\frac{n}{PN}\right) \chi(n) q^{\frac{n^2}{4P}} + \sum_{n=0}^{PN} \left(1-\frac{n}{PN}\right) \chi(n) q^{\frac{n^2}{4P}} \nonumber \\
&= \sum_{n=0}^{PN} \left(1-\frac{n}{PN}\right) \chi(n) q^{\frac{n^2}{4P}}  + \sum_{n=0}^{PN} \left(1-\frac{(PN-n)}{PN}\right) \chi(PN-n) q^{\frac{(PN-n)^2}{4P}} \nonumber \\
&= \sum_{n=0}^{PN} \left(1-\frac{n}{PN}\right) \chi(n) q^{\frac{n^2}{4P}}  + \sum_{n=0}^{PN} \frac{n}{PN} \chi(PN-n) q^{\frac{(P^2N^2 - 2nPN + n^2)}{4P}} \nonumber \\
&= \sum_{n=0}^{PN} \left(1-\frac{n}{PN}\right) \chi(n) q^{\frac{n^2}{4P}}  + \sum_{n=0}^{PN} \frac{n}{PN} q^{\frac{n^2}{4P}} \chi(PN-n) q^{\frac{N(PN - 2n)}{4}}. \label{tbc}
\end{align}
Consider the term
$$
\chi(PN-n) q^{\frac{N(PN - 2n)}{4}}.
$$
We have two cases according to the parity of $N$.    First, if $N$ is odd then since $P \equiv 2 \pmod{4}$ and $n$ is odd we have that $PN-2n$ is a multiple of $4$.    Therefore $q^{\frac{N(PN - 2n)}{4}} = 1$.   Moreover, by $(iii)$ we have $\chi(PN - n) =\chi(n)$.    Hence
$$
\chi(PN-n) q^{\frac{N(PN - 2n)}{4}} = \chi(n).
$$
If $N$ is even, then $PN-2n \equiv 2 \pmod{4}$ and so $q^{\frac{N(PN - 2n)}{4}} = -1$. Now since $N$ is even we have $\chi(PN - n) = -\chi(n)$.   Once again, we find that 
$$
\chi(PN-n) q^{\frac{N(PN - 2n)}{4}} = \chi(n).
$$ 
Using this in \eqref{tbc} we have
\begin{align*}
2\sum_{n=0}^{PN} \left(1-\frac{n}{PN}\right) \chi(n) q^{\frac{n^2}{4P}} &= \sum_{n=0}^{PN} \left(1-\frac{n}{PN}\right) \chi(n) q^{\frac{n^2}{4P}}  + \sum_{n=0}^{PN} \frac{n}{PN} q^{\frac{n^2}{4P}} \chi(n) \\
&= \sum_{n=0}^{PN} \chi(n) q^{\frac{n^2}{4P}},
\end{align*}
as desired.
\end{proof}

\begin{remark}\label{chiproprmk}
Note that (i) and (ii) in Lemma \ref{chiprop} are satisfied for all of the functions $\chi_{\bm{p}}^{\bm{\ell}}$ considered in Theorem \ref{main}. Moreover, $p_1 = 2$ and so by \cite[(3.8)]{hikami3} the involution $\sigma_1$ fixes $\bm{\ell} = (1,1,\ell_3)$, i.e., $\chi_{\bm{p}}^{\bm{\ell}}(n+P) = -\chi_{\bm{p}}^{\bm{\ell}}(n)$. Now replace $n$ with $-n$ and use the fact that $\chi_{\bm{p}}^{\bm{\ell}}$ is odd to obtain (iii) in Lemma \ref{chiprop}. Finally, note that $\chi_{\bm{p}}^{\bm{\ell}}$ is odd modulo $NP$ for all even $N$ and even modulo $NP$ for all odd $N$.    
\end{remark}

Next, we recall a result (see \cite[Proposition 3]{hikami1} or \cite[Corollary 3.9]{tm}) which explicitly computes the limiting values of the Eichler integrals $\widetilde{\Phi}_{\bm{p}}^{\bm{\ell}}(\tau)$ as $\tau \to \frac{M}{N}$.

\begin{proposition} \label{Elimit} Let $M$ and $N$ be coprime positive integers. We have
\begin{equation*}
\lim_{\tau \to \frac{M}{N}} \widetilde{\Phi}_{\bm{p}}^{\bm{\ell}}(\tau) = \sum_{n=0}^{PN} \chi_{\bm{p}}^{\bm{\ell}} \left( 1 - \frac{n}{PN} \right) \zeta_N^{M\frac{n^2}{4P}}.
\end{equation*}
\end{proposition}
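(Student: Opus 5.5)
The plan is the standard Lawrence–Zagier argument: reduce the limit to the constant term of an asymptotic expansion of a theta-type series with periodic coefficients. (The right-hand side is to be read as $\sum_{n=0}^{PN}\chi_{\bm p}^{\bm\ell}(n)\bigl(1-\frac{n}{PN}\bigr)\zeta_N^{M\frac{n^2}{4P}}$.)

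\emph{Setup.} Write $\tau=\frac{M}{N}+\frac{it}{2\pi}$ with $\mathrm{Re}\,t>0$, and put $C(n):=\chi_{\bm p}^{\bm\ell}(n)\,\zeta_N^{M\frac{n^2}{4P}}$, so that
\[
\widetilde{\Phi}_{\bm p}^{\bm\ell}(\tau)=\sum_{n\ge0}C(n)e^{-n^2t/4P}.
\]
I first check that $C$ is periodic modulo $f:=2PN$. Indeed $\chi$ has period $2P$, and
\[
\frac{(n+2PN)^2-n^2}{4P}=Nn+PN^2\in N\mathbb{Z},
\]
so the root-of-unity factor is also $f$-periodic. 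Next, $C(f-n)=-C(n)$: $\chi$ is odd, and $\frac{(f-n)^2}{4P}=PN^2-Nn+\frac{n^2}{4P}$, so the phase is unchanged. In particular $C(0)=C(f)=0$ and $\sum_{n=1}^{f}C(n)=0$.

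\emph{Asymptotics.} For an $f$-periodic $C$ of mean zero, the Mellin transform / Euler–Maclaurin argument gives the full asymptotic expansion
\[
\sum_{n\ge1}C(n)e^{-n^2 s}\sim\sum_{r\ge0}L(-2r,C)\frac{(-s)^r}{r!},\qquad s\to0,
\]
uniformly in sectors $|\arg s|\le\pi/2-\delta$. Here $L(s,C)=\sum C(n)n^{-s}$ is entire, because the mean of $C$ is zero. Its value at $0$ is given by Hurwitz zeta values:
\[
L(0,C)=\sum_{n=1}^{f}C(n)\,\zeta\!\left(0,\tfrac{n}{f}\right)=-\sum_{n=1}^{f}C(n)\left(\frac{n}{f}-\frac12\right).
\]
Applying this with $s=t/4P$ shows that the limit exists and equals $L(0,C)$, including for non-radial approach within such sectors.

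\emph{Conversion to the stated sum.} Pair $n$ with $f-n$ and use $C(f-n)=-C(n)$. The term at $f-n$ contributes
\[
-C(n)\left(\tfrac12-\tfrac{f-n}{f}\right)=C(n)\left(\tfrac12-\tfrac{n}{f}\right),
\]
which is the same as the term at $n$. The unpaired point $n=PN$ has weight zero, and $n=f$ has $C=0$. Hence
\[
L(0,C)=2\sum_{n=1}^{PN}C(n)\left(\tfrac12-\tfrac{n}{2PN}\right)=\sum_{n=0}^{PN}C(n)\left(1-\tfrac{n}{PN}\right),
\]
which is the claimed formula.

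\emph{Main obstacle.} The only delicate point is justifying the asymptotic expansion, in particular that the constant term is exactly $L(0,C)$ with no singular $s^{-1/2}$ term. The mean-zero property, obtained from oddness, kills the pole of $\Gamma(w)L(2w,C)$ at $w=\tfrac12$. I would handle this by writing $\sum_n C(n)e^{-n^2s}$ as an inverse Mellin integral of $\Gamma(w)L(2w,C)s^{-w}$ and shifting the contour to the left, picking up residues only at $w=-r$. Alternatively, I would cite the standard lemma of Lawrence–Zagier for this step.
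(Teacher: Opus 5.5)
Your proof is correct, and it is the standard Lawrence--Zagier argument: the limit is the constant term $L(0,C)$ in the asymptotic expansion of $\sum_{n\ge1}C(n)e^{-n^2s}$ for the mean-zero, $2PN$-periodic $C$, and you then fold the sum using $C(2PN-n)=-C(n)$. The paper gives no proof of its own and simply recalls the result from \cite[Proposition 3]{hikami1} and \cite[Corollary 3.9]{tm}, where it is obtained by this same asymptotic-expansion argument; your reading of the right-hand side as $\sum_{n=0}^{PN}\chi_{\bm p}^{\bm\ell}(n)\bigl(1-\frac{n}{PN}\bigr)\zeta_N^{M\frac{n^2}{4P}}$ is the intended one.
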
 

\subsection{Relations between incomplete quadratic Gauss sums}

For the fixed tuples $\bm{p} = (p_1, p_2, p_3)$ of pairwise coprime positive integers and $\bm{\ell} = (\ell_1, \ell_2, \ell_3) \in \mathbb{Z}^3$ satisfying $0 < \ell_j < p_j$, we define $m(\bm{\epsilon})\in\{0,\ldots,2P-1\}$ such that 
\begin{equation} \label{m}
m(\bm{\epsilon})\equiv P\left (1 + \sum_{j=1}^{3} \frac{\epsilon_j \ell_j}{p_j} \right) \pmod{2P}.
\end{equation}
Note that $m(\bm \epsilon)^2 \pmod{2P}$ is independent of $\bm{\epsilon} \in\{\pm 1\}^3$. We
write
\begin{equation*} \label{6ms}
T = T(q) = -\sum_{\bm \epsilon \in\{\pm 1\}^3}
\epsilon_1\epsilon_2\epsilon_3\,
T_{\bm \epsilon},\qquad\quad
S = S(q) =  -\sum_{\bm \epsilon \in\{\pm 1\}^3}
\epsilon_1\epsilon_2\epsilon_3\,
S_{\bm \epsilon}
\end{equation*}
where
\begin{equation*} \label{6ms2}
T_{\bm \epsilon} = T_{\bm \epsilon} (q) \coloneqq
\sum_{\substack{n\equiv m(\bm \epsilon) \bmod 2P\\0\leq n\leq 2p_3 N}} q^{\frac{n^2}{4P}},\qquad\quad
S_{\bm \epsilon} = S_{\bm \epsilon} (q) \coloneqq
\sum_{\substack{n\equiv m(\bm \epsilon) \bmod 2P\\2p_3 N< n\leq 4p_3 N}} q^{\frac{n^2}{4P}}.
\end{equation*}
If we write $n = k2P+m(\bm \epsilon)$ for some positive integer $k$, then
\begin{equation} \label{rewrite}
T_{\bm \epsilon} =
\sum_{k=0}^{\left \lfloor\frac {2p_3 N-m(\bm \epsilon)}{2P} \right \rfloor}
q^{{Pk^2 + m(\bm \epsilon) k + \frac{m(\bm \epsilon)^2}{4P}}},
\qqquad
S_{\bm \epsilon} =
\sum_{k=\left \lceil \frac {2p_3 N-m(\bm \epsilon)}{2P} \right \rceil}^{\left  \lfloor\frac {4p_3 N-m(\bm \epsilon)}{2P} \right \rfloor}
q^{{Pk^2 + m(\bm \epsilon) k + \frac{m(\bm \epsilon)^2}{4P}}}
\end{equation}
and so $T_{\bm \epsilon}$, $S_{\bm \epsilon} \in q^{\frac{m(\bm \epsilon)^2}{4P}} \mathbb{Z}[q]$. The sums (\ref{rewrite}) satisfy the following relations when evaluated at a root of unity.

\begin{proposition} \label{pieces1}
Let $\bm{p} = (2,3, p_3)$ with $(6,p_3)=1$, $\bm{\ell} = (1, 1, \ell_3) \in \mathbb{Z}^3$ satisfy $0 < \ell_3 < p_3$ and $\bm \epsilon := (\epsilon_1, \epsilon_2, \epsilon_3)$, $\bm \epsilon' := (\epsilon_1', \epsilon_2', \epsilon_3') \in \{\pm 1\}^3$. If $M$ and $N$ are coprime positive integers and $q= e^{\frac{2\pi i M}{N}}$, then the following are true:
\begin{enumerate}
\item[(i)] If $\frac{\epsilon_1+\epsilon_1'}{2} \equiv N \pmod 2$,
$
\epsilon_2+\epsilon_2' \equiv N \pmod 3$ and $
\epsilon_3' = -\epsilon_3,
$
then $\frac{m(\bm \epsilon)+m(\bm \epsilon')}{2P} \equiv \frac {N} 6 \pmod 1$ and
$$T_{\bm \epsilon} = e^{\frac{\pi i \left( p_3N-m(\bm \epsilon') \right)M}{3}}  T_{\bm \epsilon'}.$$

\item[(ii)] If $\frac{\epsilon_1-\epsilon_1'}{2} \equiv N \pmod 2$,
$\epsilon_2-\epsilon_2' \equiv N \pmod 3$ and $
\epsilon_3' = \epsilon_3$,
then $\frac{m(\bm \epsilon)-m(\bm \epsilon')}{2P} \equiv \frac {N} 6 \pmod 1$ and 
$$S_{\bm \epsilon} = e^{\frac{\pi i \left( p_3N+m(\bm \epsilon') \right)M}{3}} T_{\bm \epsilon'}.$$

\item[(iii)] If $\frac{\epsilon_1+\epsilon_1'}{2} \equiv 0\pmod 2$,
$\epsilon_2 +\epsilon_2' \equiv 2N \pmod 3$ and $\epsilon_3 = -\epsilon_3'$,
then $\frac{m(\bm \epsilon)+m(\bm \epsilon')}{2P}
\equiv \frac {N} 3 \pmod 1$ and 
$$S_{\bm \epsilon} = e^{\frac{2\pi i \left(2p_3N- m(\bm \epsilon') \right)M}{3}} T_{\bm \epsilon'}.$$

\item[(iv)] If $\frac{\epsilon_1+\epsilon_1'}{2} \equiv N \pmod 2$,
$\epsilon_2 +\epsilon_2'\equiv 0 \pmod 3$
 and $\epsilon_3 = -\epsilon_3'$,
then $\frac{m(\bm \epsilon)+m(\bm \epsilon')}{2P}
\equiv \frac {N} 2 \pmod 1$ and
$$S_{\bm \epsilon} =  e^{\pi i \left( p_3N-m(\bm \epsilon') \right)M} S_{\bm \epsilon'}.$$
\end{enumerate}
\end{proposition}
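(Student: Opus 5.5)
For all four parts I will use the same two steps: a congruence computation for $m(\bm\epsilon)\pm m(\bm\epsilon')$, then a reindexing of the finite sum that sends one summation window onto the other. Here $P=6p_3$ and
\[
\frac{m(\bm\epsilon)}{2P}\equiv \frac12+\frac{\epsilon_1}{4}+\frac{\epsilon_2}{6}+\frac{\epsilon_3\ell_3}{2p_3}\pmod 1 .
\]

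\textbf{Congruence step.} In (i) the $\epsilon_3$-terms cancel because $\epsilon_3'=-\epsilon_3$. This leaves
\[
\frac{m(\bm\epsilon)+m(\bm\epsilon')}{2P}\equiv 1+\frac{\epsilon_1+\epsilon_1'}{4}+\frac{\epsilon_2+\epsilon_2'}{6}\pmod 1 .
\]
- Put $\frac{\epsilon_1+\epsilon_1'}{2}=N+2a$ and $\epsilon_2+\epsilon_2'=N+3b$.
- Then the right side is $\equiv \frac N2+\frac N6=\frac{2N}{3}\pmod{1}$ up to half-integers coming from $a$ and $b$.
- The values $\epsilon_j\in\{\pm1\}$ restrict the possibilities enough to leave the residue claimed in (i).

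Parts (ii)–(iv) are the same bookkeeping with $\pm$ signs. In (ii) the $\ell_3$-terms cancel in the difference because $\epsilon_3'=\epsilon_3$. In each case I will write down the integer
\[
j:=\frac{m(\bm\epsilon)\pm m(\bm\epsilon')}{2P}-\frac{cN}{6},\qquad c\in\{1,2,3\}
\]
($c=1$ for (i),(ii), $c=2$ for (iii), $c=3$ for (iv)). This gives an exact relation $m(\bm\epsilon)=\mp m(\bm\epsilon')+cp_3N+2Pj$.

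\textbf{Reindexing step.} The tool is the reflection $n\mapsto cp_3N\cdot 2-n$ (and its variants), which maps residue classes mod $2P$ to residue classes mod $2P$.
- For (i), substitute $n=2p_3N-n'$ in $T_{\bm\epsilon}$. Since $2p_3N=12p_3N/6$ and $m(\bm\epsilon)\equiv -m(\bm\epsilon')+p_3N\pmod{2P}$, the condition $n\equiv m(\bm\epsilon)$ becomes $n'\equiv m(\bm\epsilon')$ modulo $2P$, after the $p_3N$ term is absorbed. The window $[0,2p_3N]$ maps to itself.
- The exponent changes as $\frac{n^2}{4P}=\frac{(2p_3N-n')^2}{4P}=\frac{n'^2}{4P}+\frac{p_3N^2}{6}-\frac{Nn'}{6}$.
- With $q=e^{2\pi iM/N}$, the factor $q^{-Nn'/6}$ equals $e^{-\pi iMn'/3}$. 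This depends only on $n'\bmod 6$, which is fixed by $n'\equiv m(\bm\epsilon')\pmod 6$. The factor $q^{p_3N^2/6}=e^{\pi i M p_3N/3}$.
- Together these give the constant $e^{\pi i(p_3N-m(\bm\epsilon'))M/3}$ of (i).
- For (ii), use the translation $n=n'+2p_3N$, which maps $(2p_3N,4p_3N]$ onto $(0,2p_3N]$. The cross term $q^{Nn'/3}$ and the constant term produce the stated phase. The endpoint $n'=0$ versus $n'=2p_3N$ causes no trouble, because $m(\bm\epsilon')$ is odd and so neither endpoint lies in the class.
- For (iii), use $n=4p_3N-n'$.
- For (iv), use $n=6p_3N-n'$, which maps $(2p_3N,4p_3N]$ to $[2p_3N,4p_3N)$ and so sends $S$ to $S$; the constant and cross terms give $e^{\pi i(p_3N-m(\bm\epsilon'))M}$.

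\textbf{Main obstacle.} I expect the main difficulty to be the residue bookkeeping that the image class is exactly $m(\bm\epsilon')\bmod 2P$, rather than only mod $P$. This requires the shift $cp_3N$ to match $(m(\bm\epsilon)\pm m(\bm\epsilon'))$ modulo $2P$ exactly. It also requires checking that the fractional exponents $q^{n'^2/4P}$ are interpreted consistently, meaning $n'$ is replaced by the representative satisfying $n'\equiv m(\bm\epsilon')$ so that the phase $e^{\mp\pi iMn'/3}$ is truly constant. I would handle this by checking all $\bm\epsilon$ combinations modulo $4$ and modulo $3$ via CRT, using that $m$ is odd and that its value mod $3$ is determined by $\epsilon_2$.
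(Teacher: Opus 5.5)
Your proof is correct. For (i)--(iii) it is the paper's argument in different coordinates. The paper writes $n=2Pk+m(\bm\epsilon)$ as in \eqref{rewrite} and substitutes $k\mapsto K-k$, $k\mapsto K+1+k$ and $k\mapsto K'-k$, where $K=\lfloor \frac N6-\frac{m(\bm\epsilon)}{2P}\rfloor$ and $K'=\lfloor \frac N3-\frac{m(\bm\epsilon)}{2P}\rfloor$. Its floor identities \eqref{floorid11}, \eqref{floorid20}, \eqref{floorid41} amount to
$2PK+m(\bm\epsilon)=2p_3N-m(\bm\epsilon')$, $2P(K+1)+m(\bm\epsilon)=2p_3N+m(\bm\epsilon')$ and $2PK'+m(\bm\epsilon)=4p_3N-m(\bm\epsilon')$.
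These are exactly your maps $n\mapsto 2p_3N-n$, $n\mapsto n-2p_3N$ and $n\mapsto 4p_3N-n$. Treating them as bijections of index sets in $n$, with the oddness of $m(\bm\epsilon')$ disposing of the even endpoints, spares you the floor/ceiling bookkeeping.

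The genuine difference is (iv). The paper does not reflect directly; it composes (ii) and (iii) through an auxiliary $\bm\epsilon''$ or $\bm\epsilon'''$. This forces a case split on whether $2N-\epsilon_2\equiv 0\pmod 3$, so that the auxiliary second coordinate lies in $\{\pm1\}$. Your reflection $n\mapsto 6p_3N-n$ maps $(2p_3N,4p_3N]$ onto $[2p_3N,4p_3N)$. It produces the factor $q^{3p_3N^2/2}q^{-Nn'/2}=e^{\pi i(p_3N-m(\bm\epsilon'))M}$ for $n'\equiv m(\bm\epsilon')\pmod{2P}$, so it proves (iv) in one step with no case split. That is the more economical route; the paper's route only buys reuse of (ii) and (iii).

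A few slips should be fixed.
\begin{enumerate}
\item Multiplying $\frac{m(\bm\epsilon)\pm m(\bm\epsilon')}{2P}\equiv\frac{cN}{6}\pmod 1$ by $2P=12p_3$ gives $m(\bm\epsilon)\pm m(\bm\epsilon')\equiv 2cp_3N\pmod{2P}$, not $cp_3N$. In (i) this reads $m(\bm\epsilon)\equiv 2p_3N-m(\bm\epsilon')$. Your reflection centers already use $2cp_3N$.
\item With the correct constant, your ``main obstacle'' disappears. The image class is exactly $m(\bm\epsilon')\bmod 2P$ with no CRT check. Also $q^x:=e^{2\pi i Mx/N}$ is additive in $x$, so the fractional exponents cause no trouble.
\item In (ii) the cross term of $(n'+2p_3N)^2/4P$ is $Nn'/6$, not $Nn'/3$. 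It is $q^{Nn'/6}=e^{\pi i Mm(\bm\epsilon')/3}$ that yields the phase $e^{\pi i(p_3N+m(\bm\epsilon'))M/3}$.
\item The vague step in your congruence for (i) is just that $\epsilon_2+\epsilon_2'$ is even. Hence $b\equiv N\pmod 2$, and so $\frac{2N}{3}+\frac b2\equiv\frac N6\pmod 1$.
\end{enumerate}
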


\begin{proof}
We begin with (i). By (\ref{m}) and simplifying, we have
\begin{equation} \label{summmp}
m(\bm \epsilon) + m(\bm \epsilon') \equiv 3p_3 (\epsilon_1+\epsilon_1') + 2p_3 (\epsilon_2+\epsilon_2') + 6\ell_3 (\epsilon_3+\epsilon_3') \pmod{2P}.
\end{equation}
The conditions on $\bm \epsilon$ and $\bm \epsilon'$ combined with (\ref{summmp}) now imply that 
\begin{equation} \label{sumid1}
\frac{m(\bm \epsilon) + m(\bm \epsilon')}{2P} \equiv \frac {N} 6 \pmod 1.
\end{equation}
Since $m(\bm \epsilon)$, $m(\bm \epsilon') \in [0, 2P)$, it follows that $\frac{m(\bm \epsilon)}{2P}$, $\frac{m(\bm \epsilon')}{2P} \in[0,1)$. By~\eqref{sumid1}, we have
\begin{equation}\label{floorid11}
\frac N6 - \frac{m(\bm \epsilon)}{2P} - \left \lfloor\frac{N}6 - \frac{m(\bm \epsilon)}{2P} \right \rfloor = \frac{m(\bm \epsilon')}{2P}
\end{equation}
and so rearranging terms and applying the floor function yields
\begin{equation}\label{floorid12}
\left \lfloor \frac{N}6 - \frac{m(\bm \epsilon)}{2P} \right \rfloor
=\left \lfloor\frac{N}6 - \frac{m(\bm \epsilon')}{2P} \right \rfloor.
\end{equation}
From (\ref{rewrite}), we compute
\begin{equation} \label{t2t}
\begin{aligned} 
T_{\bm \epsilon} &=\sum_{k=0}^{\left \lfloor \frac N6-\frac {m(\bm \epsilon)}{2P} \right \rfloor}
q^{P k^2+m(\bm \epsilon)k +\frac{m(\bm \epsilon)^2}{4P}}\\ 
&=\sum_{k=0}^{\left \lfloor \frac N6-\frac {m(\bm \epsilon)}{2P} \right \rfloor}
q^{P \left ( \left \lfloor \frac N6-\frac {m(\bm \epsilon)}{2P} \right \rfloor-k \right)^2
+ m(\bm \epsilon) \left (\left \lfloor \frac N6-\frac {m(\bm \epsilon)}{2P} \right \rfloor-k \right )
+ \frac{m(\bm \epsilon)^2}{4P}}
\\ 
&= q^{P \left \lfloor \frac N6-\frac {m(\bm \epsilon)}{2P} \right \rfloor^2
+ m(\bm \epsilon)  \left \lfloor \frac N6-\frac {m(\bm \epsilon)}{2P} \right \rfloor
+\frac{m(\bm \epsilon)^2-m(\bm \epsilon')^2}{4P}} \\ 
&\qqquad\times\sum_{k=0}^{\left \lfloor \frac N6-\frac {m(\bm \epsilon)}{2P} \right \rfloor}
q^{k^2
- \left( 2P \left \lfloor\frac N6-\frac {m(\bm \epsilon)}{2P} \right \rfloor
  + m(\bm \epsilon) \right )k
  +\frac{m(\bm \epsilon')^2}{4P}}
\\
& = q^{\frac{N \left(p_3N-m(\bm \epsilon') \right)}{6}}
\sum_{k=0}^{\left \lfloor\frac N6-\frac {m(\bm \epsilon')}{2P} \right \rfloor}
q^{k^2+m(\bm \epsilon')k +  \frac{PNk}{3} +\frac{m(\bm \epsilon')^2}{4P}}
\end{aligned}
\end{equation}
where we have applied the substitution $k \mapsto \left \lfloor \frac N6-\frac{m(\bm \epsilon)}{2P} \right \rfloor -k$
and used \eqref{floorid11} and \eqref{floorid12}. Letting $q=e^{\frac{2\pi i M}{N}}$ in (\ref{t2t}) and simplifying yields the result.

For (ii), a computation similar to that of \eqref{summmp} and the conditions on $\bm \epsilon$ and $\bm \epsilon'$ imply
\bea\label{diffmm1}
\frac{m(\bm \epsilon)-m(\bm \epsilon')}{2P} \equiv \frac {N} 6 \pmod 1.
\eea
By (\ref{diffmm1}), we have
\begin{equation}\label{floorid20}
\frac N 6 - \frac{m(\bm \epsilon)}{2P}
-\left\lfloor\frac N 6 - \frac{m(\bm \epsilon)}{2P} \right\rfloor 
=-\frac{m(\bm \epsilon')}{2P}+1
\end{equation}
and so again rearranging terms and applying the floor function leads to
\begin{equation}\label{floorid22}
\left\lfloor\frac N 3 - \frac{m(\bm \epsilon)}{2P}\right\rfloor
-\left\lfloor\frac N 6 - \frac{m(\bm \epsilon)}{2P}\right\rfloor 
=\left\lfloor\frac N 6-\frac{m(\bm \epsilon')}{2P}\right\rfloor+1.
\end{equation}
From (\ref{rewrite}), we compute
\begin{equation} \label{s2t}
\begin{aligned}
S_{\epsilon}  &= \sum_{k=\left \lfloor \frac N6-\frac {m(\bm \epsilon)}{2P} \right \rfloor+1}
^{\left \lfloor \frac N3-\frac {m(\bm \epsilon)}{2P} \right \rfloor}
q^{P k^2+m(\bm \epsilon)k +\frac{m(\bm \epsilon)^2}{4P}}\\
&=\sum_{k=0}^{\left \lfloor \frac N 3 - \frac{m(\bm \epsilon)}{2P}\right\rfloor -\left\lfloor\frac N 6 - \frac{m(\bm \epsilon)}{2P}\right\rfloor-1}
q^{P \left ( \left \lfloor \frac{N}6-\frac{m(\bm \epsilon)}{2P} \right\rfloor+1+k \right)^2 + m(\bm \epsilon) \left ( \left\lfloor \frac N6-\frac{m(\bm \epsilon)}{2P} \right \rfloor+1+k \right) +\frac{m(\bm \epsilon)^2}{4P}}\\
&=q^{P \left (\left \lfloor \frac{N}6-\frac{m(\bm \epsilon)}{2P} \right \rfloor+1 \right)^2
+m(\bm \epsilon)\left ( \left \lfloor \frac N6-\frac{m(\bm \epsilon)}{2P} \right \rfloor+1 \right)
+\frac{m(\bm \epsilon)^2}{4P}}
\\
&\qqquad\times\sum_{k=0}^{\left\lfloor\frac N 3 - \frac{m(\bm \epsilon)}{2P}\right\rfloor
-\left\lfloor\frac N 6 - \frac{m(\bm \epsilon)}{2P}\right\rfloor-1}
q^{P k^2+ \left ( m(\bm \epsilon) + 2P \left \lfloor \frac N6-\frac{m(\bm \epsilon)}{2P} \right \rfloor+ 2P \right) k
+\frac{m(\bm \epsilon')^2}{4P}} \\ 
&= q^{\frac{N \left (p_3N+m(\bm \epsilon') \right)}{6}}
\sum_{k=0}^{\left \lfloor \frac N 6  - \frac{m(\bm \epsilon')}{2P} \right \rfloor}
q^{P k^2+m(\bm \epsilon')k + \frac{PNk}{3} + \frac{m(\bm \epsilon')^2}{4P}}
\end{aligned}
\end{equation}
where we have applied the substitution $k \mapsto \left \lfloor \frac{N}6-\frac{m(\bm \epsilon)}{2P} \right \rfloor+1+k$ and used
\eqref{floorid20} and \eqref{floorid22}. Letting $q=e^{\frac{2\pi iM}{N}}$ in (\ref{s2t}) and simplifying yields the result.

For (iii), a computation similar to that of \eqref{summmp} and the conditions on $\bm \epsilon$ and $\bm \epsilon'$ imply
\bea\label{diffmm4}
\frac{m(\bm \epsilon)+m(\bm \epsilon')}{2P} \equiv \frac {N} 3 \pmod 1.
\eea
By (\ref{diffmm4}), we have
\begin{equation} \label{floorid41}
\frac {N} 3 -\frac{m(\bm \epsilon)}{2P}
-\left\lfloor
\frac {N} 3 -\frac{m(\bm \epsilon)}{2P}
\right\rfloor
=
\frac{m(\bm \epsilon')}{2P}
\end{equation}
and so again rearranging terms and applying the floor function leads to
\bea\label{floorid42}
\left\lfloor
\frac {N} 3 -\frac{m(\epsilon)}{2P}
\right\rfloor
-
\left\lfloor
\frac {N} 6 -\frac{m(\epsilon)}{2P}
\right\rfloor-1=
\left\lfloor\frac N 6 -\frac{m(\epsilon')}{2P}\right\rfloor.
\eea
Here, we have used $\lfloor-x\rfloor = -\lfloor x\rfloor -1$ for $x\in\mathbb R\setminus\Z$. From (\ref{rewrite}), we compute
\begin{equation} \label{s2t2}
\begin{aligned}
S_{\bm \epsilon}  &=\sum_{k=\left \lfloor \frac N6-\frac {m(\bm \epsilon)}{2P} \right \rfloor+1}
^{\left \lfloor \frac N3-\frac {m(\bm \epsilon)}{2P} \right \rfloor}
q^{P k^2+m(\bm \epsilon)k +\frac{m(\bm \epsilon)^2}{4P}}\\
&=\sum_{k=0}^{\left\lfloor\frac N 3 - \frac{m(\bm \epsilon)}{2P} \right\rfloor-\left\lfloor\frac N 6 - \frac{m(\bm \epsilon)}{2P}\right\rfloor-1}
q^{P \left ( \left \lfloor \frac{N}{3} - \frac{m(\bm \epsilon)}{2P} \right \rfloor - k \right )^2 + m(\bm \epsilon) \left( \left \lfloor \frac N3 -\frac{m(\bm \epsilon)}{2P} \right \rfloor- k \right) + \frac{m(\bm \epsilon)^2}{4P}}  \\
&=q^{P \left( \left \lfloor \frac{N}3-\frac{m(\bm \epsilon)}{2P} \right \rfloor \right )^2 + m(\bm \epsilon) \left \lfloor \frac N3-\frac{m(\bm \epsilon)}{2P}\right \rfloor  + \frac{m(\bm \epsilon)^2}{4P}} \\
&\qqquad\times\sum_{k=0}^{\left\lfloor\frac N 3 - \frac{m(\bm \epsilon)}{2P} \right\rfloor
-\left\lfloor\frac N 6 - \frac{m(\bm \epsilon)}{2P} \right\rfloor-1}
q^{P k^2- \left ( m(\bm \epsilon) + 2P \left \lfloor \frac N3-\frac{m(\bm \epsilon)}{2P}\right \rfloor \right) k +\frac{m(\bm \epsilon')^2}{4P}}\\
&= q^{\frac{N \left ( PN/3 - m(\bm \epsilon') \right)}{3}}
\sum_{k=0}^{\left \lfloor \frac N 6  - \frac{m(\bm \epsilon')}{2P} \right \rfloor}
q^{P k^2 + m(\bm \epsilon') k + \frac{2PNk}{3} + \frac{m(\bm \epsilon')^2}{4P}}
\end{aligned}
\end{equation}
where we have applied the substitution $k \mapsto \left \lfloor \frac{N}3-\frac{m(\bm \epsilon)}{2P} \right \rfloor-k$ and used \eqref{floorid41} and \eqref{floorid42}. Letting $q=e^{\frac{2\pi i M}{N}}$ in (\ref{s2t2}) and simplifying yields the result.

In order to prove (iv), we will use (ii) and (iii). A computation similar to that of \eqref{summmp} and the conditions on $\bm \epsilon$ and $\bm \epsilon'$ imply
\begin{equation} \label{mod1}
\frac{m(\bm \epsilon)+m(\bm \epsilon')}{2P} \equiv \frac N 2 \pmod 1.
\end{equation}
We now distinguish between two different cases. First, if $2N-\epsilon_2 \not \equiv  0 \pmod 3$, we define $\bm \epsilon'' =(\epsilon_1'', \epsilon_2'', \epsilon_3'') \in\{\pm 1\}^3$ by
 \bea\label{eep1}
 \frac{\epsilon_1 + \epsilon_1''}{2} \equiv 0 \pmod 2,\qqquad
 \epsilon_2+\epsilon_2'' \equiv 2N \pmod 3,\qqquad
 \epsilon_3=-\epsilon_3''.
 \eea
Then (iii) with conditions (\ref{eep1}) and (\ref{mod1}) imply
\begin{equation} \label{st1}
S_{\bm \epsilon} = e^{\frac{2 \pi i \left (2p_3 N - m(\bm \epsilon'')\right)M}{3}} T_{\bm \epsilon''}
 = e^{\frac{2 \pi i \left (-2p_3 N - m(\bm \epsilon') \right)M}{3}} T_{\bm \epsilon''}.
 \end{equation}
Subtracting the equations in (\ref{eep1}) from the initial assumptions on $\bm \epsilon$ and $\bm \epsilon'$ yields
 \bea \label{newc}
 \frac{\epsilon_1'-\epsilon_1''}{2} \equiv N \pmod 2,\qqquad
 \epsilon_2'-\epsilon_2'' \equiv N \pmod 3,\qqquad
 \epsilon_3'=\epsilon_3''.
 \eea
Thus (ii) with conditions (\ref{newc}) imply
\begin{equation} \label{st2}
T_{\bm \epsilon''} = e^{\frac{-\pi i \left ( p_3 N + m(\bm \epsilon'') \right) M}{3}} S_{\bm \epsilon'} = e^{\frac{-\pi i \left ( -p_3 N + m(\bm \epsilon') \right) M}{3}} S_{\bm \epsilon'}.
\end{equation} 
We now combine (\ref{st1}) and (\ref{st2}) and simplify to obtain the result. Second, if $2N-\epsilon_2 \equiv  0 \bmod 3$, then $N-\epsilon_2 \not\equiv 0 \bmod 3$ and we define 
$\bm \epsilon''' = (\epsilon_1''', \epsilon_2''', \epsilon_3''') \in\{\pm 1\}^3$ by
 \bea \label{eep2}
 \frac{\epsilon_1-\epsilon_1'''}{2} \equiv N \pmod 2,\qqquad
 \epsilon_2-\epsilon_2''' \equiv N \pmod 3,\qqquad
 \epsilon_3=\epsilon_3'''.
 \eea
Thus (ii) with conditions (\ref{eep2}) imply 
\begin{equation} \label{st3}
S_{\bm \epsilon} = e^{\frac{\pi i \left( p_3 N + m(\bm \epsilon''') \right) M}{3}} T_{\bm \epsilon'''} = e^{\frac{\pi i \left( -p_3 N - m(\bm \epsilon') \right) M}{3}} T_{\bm \epsilon'''}. 
\end{equation}
Subtracting the equations in (\ref{eep2}) from the initial assumptions on $\bm \epsilon$ and $\bm \epsilon'$, we obtain
 \bea \label{newc2}
 \frac{\epsilon_1'+\epsilon_1'''}{2} \equiv 0 \pmod 2,\qqquad
 \epsilon_2'+\epsilon_2''' \equiv 2N \pmod 3,\qqquad
 \epsilon_3'=-\epsilon_3'''.
 \eea
 Thus (iii) with conditions (\ref{newc2}) imply
 \begin{equation} \label{st4}
 T_{\bm \epsilon'''} = e^{\frac{-2\pi i \left( 2p_3 N - m(\bm \epsilon''') \right) M}{3}} S_{\bm \epsilon'} = e^{\frac{-2\pi i \left( 4p_3 N + m(\bm \epsilon') \right) M}{3}} S_{\bm \epsilon'}.
 \end{equation}
 We now combine (\ref{st3}) and (\ref{st4}) and simplify to obtain the result.
 \end{proof}

Applying the identities from Proposition \ref{pieces1} immediately yields the following result. Henceforth, we use the convention $+$ for $+1$ and $-$ for $-1$ in the indices of $T_{\bm \epsilon}$ and $S_{\bm \epsilon}$. 

\begin{corollary}\label{pieces}
Let $\bm{p} = (2,3, p_3)$ with $(6,p_3)=1$ and $\bm{\ell} = (1, 1, \ell_3) \in \mathbb{Z}^3$ satisfy $0 < \ell_3 < p_3$. If $M$ and $N$ are coprime positive integers and $q=e^{\frac{2\pi i M}{N}}$, then the following are true:
 \begin{enumerate}
   \item If $N\equiv 0 \bmod 6$, then
  \begin{gather*}\label{N0}
  T_{+++}
    =e^{-\frac{\pi ip_3 M}{3}} T_{---}
    =e^{\frac{\pi ip_3 M}{3}} S_{+++}
    =e^{-\frac{2\pi ip_3 M}{3}}S_{---},\\
  T_{++-}
    =e^{-\frac{\pi ip_3 M}{3}} T_{--+}
    =e^{-\frac{2\pi ip_3 M}{3}} S_{++-}
    =e^{\frac{\pi ip_3 M}{3}} S_{--+},\\
  T_{+-+}
    =e^{\frac{\pi ip_3 M}{3}} T_{-+-}
    =e^{-\frac{\pi ip_3 M}{3}} S_{+-+}
    =e^{\frac{2\pi ip_3 M}{3}} S_{-+-},\\
  T_{+--}
    =e^{\frac{\pi ip_3 M}{3}} T_{-++}
    =e^{-\frac{\pi ip_3 M}{3}} S_{+--}
    =e^{\frac{2\pi ip_3 M}{3}} S_{-++}.
  \end{gather*}
  \item If $N\equiv 1 \bmod 6$, then 
  \begin{gather*}
   T_{+-+} = T_{+--},\qqquad
   T_{--+} = T_{---}
   ,\\
   T_{-++} = S_{+-+} = S_{++-},\qqquad
   T_{-+-} = S_{+--} = S_{+++}
   ,\\
   T_{+++} = S_{--+} = S_{-+-},\qqquad
   T_{++-} = S_{---} = S_{-++}.
\end{gather*}
  \item If $N\equiv 2 \bmod 6$, then
   \begin{gather*}
   T_{+++} = -T_{-+-},\qqquad
   T_{++-} = -T_{-++},\\
   T_{+-+} = -S_{+++}=S_{---},\qqquad
   T_{+--} = -S_{++-}=S_{--+},\\
   T_{--+} = -S_{-++}=S_{+--},\qqquad
   T_{---} = -S_{-+-}=S_{+-+}.
  \end{gather*}
  \item If $N\equiv 3 \bmod 6$, then
  \begin{gather*}
  T_{+++}
    =e^{\frac{2\pi M}{3}} T_{+--}
    =e^{-\frac{2\pi i M}{3}}S_{-++}
    =e^{-\frac{2\pi i M}{3}}S_{---},\\
  T_{++-}
    =e^{\frac{2\pi i M}{3}}T_{+-+}
    =e^{-\frac{2\pi i M}{3}}S_{-+-}
    =e^{-\frac{2\pi i M}{3}}S_{--+},\\
  T_{-++}
    =e^{\frac{2\pi i M}{3}}T_{---}
    =e^{-\frac{2\pi i M}{3}}S_{+++}
    =e^{-\frac{2\pi i M}{3}}S_{+--},\\
  T_{-+-}
    =e^{\frac{2\pi i M}{3}}T_{--+}
    =e^{-\frac{2\pi i M}{3}} - S_{++-}
    =e^{-\frac{2\pi i M}{3}} - S_{+-+}.
  \end{gather*}
  \item If $N\equiv 4 \bmod 6$, then
   \begin{gather*}
   T_{+-+} = -T_{---},\qqquad
   T_{+--} = -T_{--+},\\
   T_{+++} = -S_{+-+}=S_{-+-},\qqquad
   T_{++-} = -S_{+--}=S_{-++},\\
   T_{-++} = -S_{--+}=S_{++-},\qqquad
   T_{-+-} = -S_{---}=S_{+++}.
  \end{gather*}
  \item If $N\equiv 5 \bmod 6$, then 
  \begin{gather*}
   T_{+++} = T_{++-},\qqquad
   T_{-++} = T_{-+-},\\
   T_{--+} = S_{+++}=S_{+--},\qqquad
   T_{---} = S_{++-}=S_{+-+},\\
   T_{+-+} = S_{-++}=S_{---},\qqquad
   T_{+--} = S_{-+-}=S_{--+}.
\end{gather*}
 \end{enumerate}
\end{corollary}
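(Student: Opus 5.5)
The corollary follows mechanically from Proposition \ref{pieces1}. We fix the residue of $N$ modulo $6$ and, for each listed equality, exhibit a pair $(\bm\epsilon,\bm\epsilon')$ satisfying the hypotheses of one of parts (i)--(iv). We then read off the phase factor. Since only $N \bmod 6$ enters the congruence conditions, the six residue classes can be handled uniformly.

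\emph{Step 1: the admissible pairs.} For a given $N$, list the pairs allowed by each part. Part (i) requires $\epsilon_3'=-\epsilon_3$, with $\epsilon_1'$ fixed by $\frac{\epsilon_1+\epsilon_1'}{2}\equiv N \pmod 2$; this gives $\epsilon_1'=\epsilon_1$ when $N$ is odd and $\epsilon_1'=-\epsilon_1$ when $N$ is even. The condition $\epsilon_2+\epsilon_2'\equiv N\pmod 3$ determines $\epsilon_2'$ when it is solvable, and fails for one value of $\epsilon_2$ when $N\not\equiv 0 \pmod 3$. Parts (ii)--(iv) are treated the same way.

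For example, take $N\equiv 1 \pmod 6$. Part (i) needs $\epsilon_1'=\epsilon_1$, $\epsilon_2+\epsilon_2'\equiv 1 \pmod 3$ (forcing $\epsilon_2=\epsilon_2'=-1$), and $\epsilon_3'=-\epsilon_3$. This yields $T_{+-+}\sim T_{+--}$ and $T_{--+}\sim T_{---}$. Part (ii) needs $\epsilon_1'=-\epsilon_1$, $\epsilon_2-\epsilon_2'\equiv 1\pmod 3$ (forcing $\epsilon_2=-1$, $\epsilon_2'=1$), and $\epsilon_3'=\epsilon_3$. This relates $S_{+-\pm}$ to $T_{-+\pm}$ and $S_{--\pm}$ to $T_{++\pm}$. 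Part (iii) supplies the remaining $S$--$T$ relations, and part (iv) supplies the $S$--$S$ ones. Chaining these relations produces exactly the displayed groups. For $N\equiv 0\pmod 6$ every $\epsilon_2$ is allowed, so each $T$ is linked to its antipodal $T$ and to two $S$'s, which gives the four-term chains.

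\emph{Step 2: the phases.} Evaluate each factor $e^{\pi i(p_3N\mp m(\bm\epsilon'))M/3}$ (or its (iii)/(iv) analogue) using \eqref{m} with $P=6p_3$ and $\ell_1=\ell_2=1$. This gives
\[
m(\bm\epsilon)\equiv 6p_3+3p_3\epsilon_1+2p_3\epsilon_2+6\ell_3\epsilon_3 \pmod{12p_3},
\]
so $m(\bm\epsilon)\equiv p_3(3\epsilon_1+2\epsilon_2)\pmod 6$; the $\ell_3$ term drops out. Combined with $p_3N \bmod 6$, this reduces every phase to a sixth root of unity depending only on $p_3M$, $N \bmod 6$ and $\bm\epsilon'$.
\begin{itemize}
\item When $6\mid N$, the factor $e^{\pi i p_3NM/3}$ is $1$, and the phases become the powers $e^{\pm\pi i p_3M/3}$, $e^{\pm 2\pi i p_3M/3}$ shown in case (1).
\item When $N\equiv\pm1\pmod 6$, the phases must collapse to $1$. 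This is checked using $N\equiv \pm1$ together with the congruence $\frac{m(\bm\epsilon)\pm m(\bm\epsilon')}{2P}\equiv \frac N6$ from Proposition \ref{pieces1}.
\item When $N\equiv 2,4\pmod 6$, the phases become $\pm1$.
\item When $N\equiv 3\pmod 6$, the phases become $e^{\pm 2\pi i M/3}$, with $p_3\equiv\pm1\pmod 3$ absorbed.
\end{itemize}

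\emph{Main obstacle.} The only real difficulty is the bookkeeping of these phase evaluations in each of the six cases. Signs must be tracked carefully: the factor $M$ is always coprime to $N$, but $p_3M$ must be reduced modulo $6$, and the choice of $m(\bm\epsilon')$ as a representative in $[0,2P)$ must be used consistently. I would tabulate $m(\bm\epsilon) \bmod 6$ for all eight $\bm\epsilon$ once, and then verify each displayed equality against that table.
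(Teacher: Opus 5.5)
Your strategy is the paper's own: the paper simply says the corollary follows by applying Proposition \ref{pieces1}. Your way of carrying it out, by matching each relation to a pair $(\bm\epsilon,\bm\epsilon')$ in (i)--(iv) and reading off the phase from $m(\bm\epsilon)\equiv p_3(3\epsilon_1+2\epsilon_2)\pmod 6$, is sound. Your $N\equiv 1\pmod 6$ analysis is correct, and cases (2), (3), (5), (6) come out exactly as displayed. The gap is that in cases (1) and (4) you assert, without computing, that the phases are the displayed ones, and they are not.

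For (1) there is a structural check. Every phase in (i)--(iv) depends on $\bm\epsilon'$ only through $m(\bm\epsilon')\bmod 6$, which does not involve $\epsilon_3$. The hypotheses of (i)--(iv) are also unchanged when $\epsilon_3$ and $\epsilon_3'$ are both flipped. Hence the line for $T_{++-}$ must carry the same phases as the line for $T_{+++}$, namely $T_{++-}=e^{-\frac{\pi i p_3M}{3}}T_{--+}=e^{\frac{\pi i p_3M}{3}}S_{++-}=e^{-\frac{2\pi i p_3M}{3}}S_{--+}$. The printed factors on $S_{++-}$ and $S_{--+}$ are swapped, and since $p_3M$ is odd here each differs from the correct one by a sign. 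Concretely, for $p_3=5$, $\ell_3=1$, $N=6$, $M=1$ you get $T_{++-}=q^{2401/120}$ and $S_{++-}=q^{11881/120}$. So $T_{++-}=q^{-79}S_{++-}=e^{-\pi i/3}S_{++-}$, whereas the printed line predicts $e^{2\pi i/3}S_{++-}$.

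In case (4), your phrase ``with $p_3\equiv\pm1\pmod 3$ absorbed'' hides a genuine dependence on $p_3$. For $N\equiv 3\pmod 6$ one has $p_3N\equiv 3$ and $m(+--)\equiv p_3\pmod 6$. Part (i) then gives $T_{+++}=e^{\pi i(3-p_3)M/3}T_{+--}=e^{\frac{2\pi i p_3M}{3}}T_{+--}$, since $3-p_3\equiv 2p_3\pmod 6$. The same computation replaces every $e^{\pm\frac{2\pi iM}{3}}$ in (4) by $e^{\pm\frac{2\pi i p_3M}{3}}$.

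Thus (4) as printed holds only for $p_3\equiv 1\pmod 3$, even after repairing the typos $e^{\frac{2\pi M}{3}}$, $e^{-\frac{2\pi iM}{3}}-S_{++-}$ and $e^{-\frac{2\pi iM}{3}}-S_{+-+}$. For $p_3=5$, $\ell_3=1$, $N=3$, $M=1$ one has $T_{+++}=q^{1/120}$ and $T_{+--}=q^{841/120}$, hence $T_{+++}=q^{-7}T_{+--}=e^{-2\pi i/3}T_{+--}$. So your method, carried out honestly, proves the corrected statement, not the printed one.

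This is harmless downstream. \eqref{Nmod06} is in fact computed with the corrected second line of (1). The $N\equiv 3\pmod 6$ cancellation in Corollary \ref{ex} only uses $1+\omega+\bar\omega=0$ for the primitive cube root of unity $\omega=e^{2\pi i p_3M/3}$.
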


We now illustrate the previous result.

\begin{corollary} \label{ex} Let $\bm{p} = (2,3, p_3)$ with $(6,p_3)=1$ and $\bm{\ell} = (1, 1, \ell_3) \in \mathbb{Z}^3$ satisfy $0 < \ell_3 < p_3$ and write $\chi = \chi_{(2, 3, p_3)}^{(1,1,\ell_3)}$. If $M$ and $N$ are coprime positive integers and $q=e^{\frac{2\pi i M}{N}}$, then
\bea\label{gensum4sum}
\sum_{n=0}^{6p_3N} \chi(n) q^{\frac{n^2}{24p_3}} = 4 \sum_{n=0}^{2p_3N} \chi(n) q^{\frac{n^2}{24p_3}}.
\eea
\end{corollary}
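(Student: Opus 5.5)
Here $P = 6p_3$. The plan is to cut the range $0\le n\le 6p_3N = 3\cdot 2p_3N$ into three blocks of length $2p_3N$, written $[0,2p_3N]$, $(2p_3N,4p_3N]$ and $(4p_3N,6p_3N]$, and then show each block sum is expressible through $T$ and $S$.

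\textbf{Step 1: express the blocks through $T$ and $S$.} By definition,
\begin{equation*}
\sum_{n=0}^{2p_3N}\chi(n)q^{\frac{n^2}{4P}} = T, \qquad \sum_{2p_3N<n\le 4p_3N}\chi(n)q^{\frac{n^2}{4P}} = S,
\end{equation*}
since $\chi(n)=-\epsilon_1\epsilon_2\epsilon_3$ exactly when $n\equiv m(\bm\epsilon)\pmod{2P}$.

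\textbf{Step 2: fold the third block onto the first.} Substitute $n\mapsto PN-n = 6p_3N-n$ in the third block. As in the proof of Lemma \ref{chiprop} (using Remark \ref{chiproprmk}), conditions (i)--(iii) give
\begin{equation*}
\chi(PN-n)\,q^{\frac{(PN-n)^2}{4P}}=\chi(n)\,q^{\frac{n^2}{4P}}.
\end{equation*}
Hence the third block equals $\sum_{0\le n<2p_3N}\chi(n)q^{\frac{n^2}{4P}}$. This is $T$, up to the endpoint $n=2p_3N$. That endpoint contributes nothing, because $2p_3N\equiv m(\bm\epsilon)\pmod{2P}$ would force divisibility incompatible with $\ell_3\not\equiv 0$. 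A more careful alternative is to note that the $n=0$ and $n=PN$ terms vanish since $\chi$ is supported on odd $n$ and $PN$ is even, and to check the boundary case directly. Either way the whole sum becomes $2T+S$, so the claim \eqref{gensum4sum} reduces to proving $S=2T$.

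\textbf{Step 3: prove $S=2T$, case by case in $N \bmod 6$.} Work through the six cases of Corollary \ref{pieces}, substituting the stated relations into
\begin{equation*}
S=-\sum_{\bm\epsilon}\epsilon_1\epsilon_2\epsilon_3\,S_{\bm\epsilon}, \qquad T=-\sum_{\bm\epsilon}\epsilon_1\epsilon_2\epsilon_3\,T_{\bm\epsilon}.
\end{equation*}
For example, when $N\equiv1\pmod 6$, each $S_{\bm\epsilon}$ equals a specific $T_{\bm\epsilon'}$, and the signs have to line up. From $T_{-++}=S_{+-+}=S_{++-}$, the two $S$ terms carry signs $+$ and $+$ (since $\epsilon_1\epsilon_2\epsilon_3=-1$ for both), which gives $2\cdot(\text{sign})\,T_{-++}$. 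The relations $T_{+-+}=T_{+--}$ and $T_{--+}=T_{---}$ show that the four remaining $T$ terms cancel in pairs, because those pairs have opposite sign products. So $T$ reduces to contributions from $T_{-++},T_{-+-},T_{+++},T_{++-}$, and $S$ becomes twice exactly those contributions. The cases $N\equiv 2,4,5$ should go the same way. In the cases $N\equiv0,3\pmod 6$ the relations carry roots of unity $e^{\pm\pi i p_3M/3}$, and one has to check that the phases combine correctly. Since each relation pairs $\bm\epsilon$ with a distinct $\bm\epsilon'$ of opposite sign product, there may also be cancellations that make both $T$ and $S$ vanish, which of course also gives $S=2T$.

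\textbf{Main obstacle.} I expect the difficulty to lie in Step 3 for $N\equiv0,3\pmod 6$, where the phases must be tracked. There I would also use the self-relation from Proposition \ref{pieces1}(iv), $S_{\bm\epsilon}=\pm S_{\bm\epsilon'}$, to show that $T$ and $S$ both vanish. Failing that, I would verify the identity of the phases directly, using the fact that $p_3M$ is coprime to $3$.
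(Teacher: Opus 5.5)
Your Steps 1--2 are the paper's reduction: folding the third block by $n\mapsto PN-n$ gives $2T+S$, so the claim is $S=2T$. For $N\equiv 1,2,4,5\pmod 6$ this is a direct substitution from Corollary \ref{pieces}, as you carried out for $N\equiv 1$. The gap is in the only nontrivial cases, $N\equiv 0,3\pmod 6$. You leave these open, and the route you propose first there fails, because $T$ and $S$ do not vanish in general. For $p_3=5$, $\ell_3=1$, $M=1$ one computes $T=-2e^{\pi i/360}$ and $S=-4e^{\pi i/360}$ when $N=6$, and $T=2e^{4\pi i/3+\pi i/180}\neq 0$ when $N=3$. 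Nor can Proposition \ref{pieces1}(iv) produce cancellation:
\begin{itemize}
\item for $N\equiv 0\pmod 6$ it says $S_{\bm\epsilon}=-S_{-\bm\epsilon}$, and $\bm\epsilon$, $-\bm\epsilon$ have opposite sign products;
\item for $N\equiv 3\pmod 6$ it says $S_{\bm\epsilon}=S_{(\epsilon_1,-\epsilon_2,-\epsilon_3)}$, and these have equal sign products.
\end{itemize}
Either way the paired terms add in $S$. The $N\equiv 3$ case also shows that your remark that every relation pairs opposite sign products is false.

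What is needed is your fallback, actually carried out, and it rests on a cyclotomic identity.

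For $N\equiv 0\pmod 6$, put $\omega=e^{\pi i p_3M/3}$ and write $S-2T$ in terms of $T_{+++},T_{++-},T_{+-+},T_{+--}$. Each coefficient is $\pm(2-2\omega-\omega^{-1}+\omega^2)$ or $\pm$ its complex conjugate. This is $0$ because $\omega$ is a \emph{primitive sixth} root of unity. That needs $M$ odd as well as prime to $3$ (both follow from $6\mid N$ and $\gcd(M,N)=1$), together with $(p_3,6)=1$. Coprimality to $3$ alone is not enough: for a primitive cube root the expression is $2-2\omega\neq 0$.

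For $N\equiv 3\pmod 6$, put $\rho=e^{2\pi i p_3M/3}$. Proposition \ref{pieces1}(i),(ii) give $T_{\bm\epsilon}=\rho^{\epsilon_2}T_{(\epsilon_1,-\epsilon_2,-\epsilon_3)}$ and $S_{\bm\epsilon}=\rho^{\epsilon_2}T_{(-\epsilon_1,\epsilon_2,\epsilon_3)}$. Hence $T=-(1+\rho^{-1})\Sigma$ and $S=2\rho\,\Sigma$, where $\Sigma=\sum_{\epsilon_1,\epsilon_3}\epsilon_1\epsilon_3T_{(\epsilon_1,+,\epsilon_3)}$. So $S=2T$ is exactly $1+\rho+\rho^2=0$.

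A practical warning: take the phases from Proposition \ref{pieces1}, not from the printed Corollary \ref{pieces}(1),(4).
\begin{itemize}
\item In the second line of (1), the phases attached to $S_{++-}$ and $S_{--+}$ are interchanged; correctly, $T_{++-}=\omega S_{++-}=\omega^{-2}S_{--+}$. Used literally, the printed version gives the coefficient $4(\omega-1)\neq 0$ for $T_{++-}$ in $S-2T$. The paper's display \eqref{Nmod06} uses the correct phases.
\item Line (4) has typographical slips: a missing $i$, stray minus signs, and $p_3$ dropped from the exponents.
\end{itemize}
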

\begin{proof}
By Remark~\ref{chiproprmk}, we obtain
\begin{align*}
\sum_{n=0}^{6p_3N} \chi(n) q^{\frac{n^2}{24p_3}} &\= 
-2\sum_{\bm \epsilon\in\{\pm1\}^3} \epsilon_1\epsilon_2\epsilon_3 T_{\bm \epsilon}
- \sum_{\bm \epsilon \in \{\pm1\}^3}
\epsilon_1\epsilon_2\epsilon_3  S_{\bm \epsilon} \\ \nonumber
&\=2 \left( - T_{ + + + } + T_{ + + - } + T_{ + - + } - T_{ + - - } + T_{ - + + } - T_{ - + - } - T_{ - - + } + T_{ - - - } \right) \\ \nonumber
&\qquad - S_{ + + + } + S_{ + + - } + S_{ + - + } - S_{ + - - } + S_{ - + + } - S_{ - + - } - S_{ - - + } + S_{ - - - }
\end{align*}
and
\begin{align*}
\sum_{n=0}^{2p_3N} \chi(n) q^{\frac{n^2}{24p_3}} &\= -\sum_{\bm \epsilon\in\{\pm1\}^3} \epsilon_1\epsilon_2\epsilon_3 T_{\bm \epsilon} \\ \nonumber
&\=
- T_{ + + + } + T_{ + + - } + T_{ + - + } - T_{ + - - } + T_{ - + + } - T_{ - + - } - T_{ - - + } + T_{ - - - }.
\end{align*}
Hence, to verify \eqref{gensum4sum} it suffices to prove that
\bea\label{eq:TsSs}
0 &\= -2 \left(- T_{ + + + } + T_{ + + - } + T_{ + - + } - T_{ + - - } + T_{ - + + } - T_{ - + - } - T_{ - - + } + T_{ - - - } \right) \\
&\qquad - S_{ + + + } + S_{ + + - } + S_{ + - + } - S_{ + - - } + S_{ - + + } - S_{ - + - } - S_{ - - + } + S_{ - - - }.
\eea
If $N \not \equiv 0 \pmod{3}$, this follows directly from Corollary \ref{pieces} (2), (3), (5) and (6). Otherwise, the terms on the right-hand side of \eqref{eq:TsSs} have to be rearranged. For example, if $N\equiv 0 \pmod 6$, by Corollary \ref{pieces} (1), the right-hand side of \eqref{eq:TsSs} is given by
\begin{equation} \label{Nmod06}
 \begin{aligned}
 &T_{+++} \left( 2-2e^{\frac{\pi i p_3 M}{3}} - e^{-\frac{\pi i p_3 M}{3}} + e^{\frac{2\pi i p_3 M}{3}} \right) +T_{++-} \left(-2+2e^{\frac{\pi i p_3 M}{3}} + e^{-\frac{\pi i p_3 M}{3}} - e^{\frac{2\pi i p_3 M}{3}} \right) \\
&+T_{+-+} \left(-2+2e^{-\frac{\pi i p_3 M}{3}} + e^{\frac{\pi i p_3 M}{3}} - e^{-\frac{2\pi i p_3 M}{3}} \right) + T_{+--} \left( 2-2e^{-\frac{\pi i p_3 M}{3}} - e^{\frac{\pi i p_3 M}{3}} + e^{-\frac{2\pi i p_3 M}{3}} \right)
 \end{aligned}
 \end{equation}
where $M \not \equiv 0 \pmod{3}$. We then use the identity $2-2e^{\frac{\pi i p_3 M}{3}} - e^{-\frac{\pi i p_3 M}{3}} + e^{\frac{2\pi i p_3 M}{3}} =0$ to deduce \eqref{eq:TsSs} from (\ref{Nmod06}). The proof for $N\equiv 3 \pmod 6$ is similar.
\end{proof}

\section{Proof of Theorem \ref{main}}

We are now in a position to prove Theorem \ref{main}. Again, we follow the order in which the Bailey pairs were introduced in Section \ref{arseBP}.

\begin{proof} We begin with the proof of (\ref{main6}). By Proposition \ref{Elimit}, (\ref{main6}) is equivalent to
\begin{equation} \label{6step0}
-\frac{1}{2} \sum_{n=0}^{6(6p+1)N} \chi(n) \left( 1 - \frac{n}{6(6p+1)N} \right) \zeta_N^{M \frac{n^2 - (6p-1)^2}{24(6p+1)}} = H_{p}^{(6)}(\zeta_N^{M}) 
\end{equation}
where $\chi = \chi_{(2,3,6p+1)}^{(1,1,2p)}$ is the odd periodic function given by 
\begin{eqnarray} \label{chi23}
\chi(n) = 
\begin{cases}
1 & \text{if $n \equiv 42p+5$, $54p+7$, $54p+11$, $66p+13$ $\pmod{72p+12}$,} \\
-1 & \text{if $n \equiv 6p-1$, $18p+1$, $18p+5$, $30p+7$ $\pmod{72p+12}$,}\\
0 & \text{otherwise.}
\end{cases}
\end{eqnarray}
By Theorem \ref{Baileyfamily1},
\begin{equation} \label{6step1}
H_{p}^{(6)}(q) = \sum_{k=0}^{N-1} (-1)^k q^{\binom{k+1}{2} + (p-1)(k^2 + k)} \alpha_k
\end{equation}
where $q=\zeta_N^{M}$ and $(\alpha_k, \beta_k)$ is the Bailey pair in (\ref{BP1}). Next, we claim that
\begin{equation} \label{6step2}
4 \sum_{k=0}^{N-1} (-1)^k q^{\binom{k+1}{2} + (p-1)(k^2 + k)} \alpha_k = - \sum_{n=0}^{6(6p+1)N} \chi(n) q^{\frac{n^2 - (6p-1)^2}{24(6p+1)}}
\end{equation}
when $q=\zeta_N^{M}$. To deduce (\ref{6step2}), we first demonstrate that (as polynomials in $q$)
\begin{equation} \label{6step3}
\sum_{k=0}^{N-1} (-1)^k q^{\binom{k+1}{2} + (p-1)(k^2 + k)} \alpha_k = - \sum_{n=0}^{2(6p+1)N} \chi(n) q^{\frac{n^2 - (6p-1)^2}{24(6p+1)}}.
\end{equation}
We proceed by induction on $N$. The induction step consists of verifying the identity
\begin{equation} \label{6step4}
(-1)^N q^{\binom{N+1}{2} + (p-1)(N^2 + N)} \alpha_N = -\sum_{n=0}^{12p+1} \chi((12p+2)(N+1) - n) q^{\frac{((12p+2)(N+1) - n)^2 - (6p-1)^2}{24(6p+1)}}.
\end{equation}
We consider three cases.  First suppose that $N=3r$. If $r$ is even, then 
\begin{equation*} \label{6step5}
\chi((12p+2)(N+1) - n) = \chi(12p+2-n)
\end{equation*}
and so the right-hand side of \eqref{6step4} is
\begin{equation} \label{6stepeven}
\begin{aligned}
-\sum_{n=0}^{12p+1} \chi(12p+2-n) q^{\frac{((12p+2)(N+1) - n)^2 - (6p-1)^2}{24(6p+1)}} &= 
-\chi(6p-1) q^{\frac{((12p+2)(N+1) - (6p+3))^2 - (6p-1)^2}{24(6p+1)}} \\
&= q^{\frac{((12p+2)(N+1) - (6p+3))^2 - (6p-1)^2}{24(6p+1)}}
\end{aligned}
\end{equation}
by (\ref{chi23}). Similarly, if $r$ is odd, then we have  
\begin{equation*} \label{6step6}
\chi((12p+2)(N+1) - n)  =  \chi((12p+2)(N+3) - 24p-4-n) = -\chi(n+24p+4),
\end{equation*}
and so the right-hand side of \eqref{6step4} is
\begin{equation} \label{6stepodd}
\begin{aligned}
\sum_{n=0}^{12p+1} \chi(n+24p+4) q^{\frac{((12p+2)(N+1) - n)^2 - (6p-1)^2}{24(6p+1)}} &= 
\chi(30p+7) q^{\frac{((12p+2)(N+1) - (6p+3))^2 - (6p-1)^2}{24(6p+1)}} \\
&= -q^{\frac{((12p+2)(N+1) - (6p+3))^2 - (6p-1)^2}{24(6p+1)}}
\end{aligned}
\end{equation}
by (\ref{chi23}). In both (\ref{6stepeven}) and (\ref{6stepodd}), one can check using (\ref{BP1}) that the signs and powers of $q$ on both sides of (\ref{6step4}) match. Next suppose that $N=3r+1$.    If $r$ is even, then 
\begin{equation*} 
\chi((12p+2)(N+1) - n) = \chi(24p+4-n)
\end{equation*}
and so the right-hand side of \eqref{6step4} is
\begin{equation} \label{6stepeven1}
\begin{aligned}
-\sum_{n=0}^{12p+1}& \chi(24p+4-n) q^{\frac{((12p+2)(N+1) - n)^2 - (6p-1)^2}{24(6p+1)}} \\
&= 
-\chi(18p+1) q^{\frac{((12p+2)(N+1) - (6p+3))^2 - (6p-1)^2}{24(6p+1)}} - \chi(18p+5) q^{\frac{((12p+2)(N+1) - (6p-1))^2 - (6p-1)^2}{24(6p+1)}} \\
&= q^{\frac{((12p+2)(N+1) - (6p+3))^2 - (6p-1)^2}{24(6p+1)}} + q^{\frac{((12p+2)(N+1) - (6p-1))^2 - (6p-1)^2}{24(6p+1)}}
\end{aligned}
\end{equation}
by (\ref{chi23}). Similarly, if $r$ is odd, then we have
\begin{equation*} 
\chi((12p+2)(N+1) - n)  =  \chi((12p+2)(N+2) - 12p-2-n) = -\chi(n+12p+2),
\end{equation*}
and so the right-hand side of \eqref{6step4} is
\begin{equation} \label{6stepodd1}
\begin{aligned}
\sum_{n=0}^{12p+1}& \chi(n+12p+2) q^{\frac{((12p+2)(N+1) - n)^2 - (6p-1)^2}{24(6p+1)}} \\
&= \chi(18p+5) q^{\frac{((12p+2)(N+1) - (6p+3))^2 - (6p-1)^2}{24(6p+1)}} + \chi(18p+1) q^{\frac{((12p+2)(N+1) - (6p-1))^2 - (6p-1)^2}{24(6p+1)}} \\
&= -q^{\frac{((12p+2)(N+1) - (6p+3))^2 - (6p-1)^2}{24(6p+1)}} - q^{\frac{((12p+2)(N+1) - (6p-1))^2 - (6p-1)^2}{24(6p+1)}}
\end{aligned}
\end{equation}
by (\ref{chi23}). In both (\ref{6stepeven1}) and (\ref{6stepodd1}), one can check using (\ref{BP1}) that the signs and powers of $q$ on both sides of (\ref{6step4}) match.  As the argument is similar for $N=3r-1$, we omit the details. 

Now, by (\ref{6step3}), it suffices to prove
\begin{equation} \label{6step7}
\sum_{n=0}^{6(6p+1)N} \chi(n) q^{\frac{n^2 - (6p-1)^2}{24(6p+1)}} = 4 \sum_{n=0}^{2(6p+1)N} \chi(n) q^{\frac{n^2 - (6p-1)^2}{24(6p+1)}}
\end{equation}
when $q =\zeta_N^{M}$. This follows by taking $p_3=6p+1$ and multiplying both sides of (\ref{gensum4sum}) by $q^{-\frac{(6p-1)^2}{24(6p+1)}}$ in Corollary \ref{ex}. Thus, (\ref{6step3}) with $q=\zeta_N^{M}$ and (\ref{6step7}) imply (\ref{6step2}). By Lemma \ref{chiprop}, we have
\begin{equation} \label{6step8}
\sum_{n=0}^{6(6p+1)N} \chi(n) q^{\frac{n^2 - (6p-1)^2}{24(6p+1)}} = 2 q^{-\frac{(6p-1)^2}{24(6p+1)}} \sum_{n=0}^{6(6p+1)N} \chi(n) \left( 1 - \frac{n}{6(6p+1)N} \right) q^{\frac{n^2}{24(6p+1)}}
\end{equation}
where $q=\zeta_N^{M}$. Combining (\ref{6step1}), (\ref{6step2}) and (\ref{6step8}), we arrive at
\begin{equation} \label{6step9}
2 \sum_{n=0}^{6(6p+1)N} \chi(n) \left( 1 - \frac{n}{6(6p+1)N} \right) \zeta_N^{M \frac{n^2 - (6p-1)^2}{24(6p+1)}}  = -4 H_{p}^{(6)}(\zeta_N^{M}). 
\end{equation}
Multiplying both sides of (\ref{6step9}) by $-\frac{1}{4}$ yields (\ref{6step0}). Thus, (\ref{main6}) follows.

As the proofs for (\ref{main1eq})--(\ref{main5eq}) and (\ref{main7eq})--(\ref{main10eq}) are similar to that of (\ref{main6}), we sketch the details. One first uses Proposition \ref{Elimit} to express the limiting value of the relevant Eichler integral as a finite sum (as in the left-hand side of (\ref{6step0})). Next, by Theorems \ref{Baileyfamily2}--\ref{Baileyfamily10}, we can express each of the left-hand sides of (\ref{main1eq})--(\ref{main5eq}) and (\ref{main7eq})--(\ref{main10eq}) as a finite sum involving $\alpha_k$, $(q^{p(k^2 + 2k)} \alpha_k)^{*}$ or $\widehat{\alpha}_{k}$ (analogous to (\ref{6step1})). One then proves an underlying identity between polynomials in $q$ akin to (\ref{6step3}). Here we list these identities as significant care is required in some cases. As for (\ref{6step3}), the proofs proceed by induction on $N$. For (\ref{main7eq}), the identity is
\begin{equation} \label{main7poly}
\sum_{k=0}^{N-1} (-1)^k q^{\binom{k+1}{2} + (p-1)(k^2 + k)} \alpha_k = -\sum_{n=0}^{2(6p+1)N} \chi(n) q^{\frac{n^2 - (6p+5)^2}{24(6p+1)}}
\end{equation}
where $(\alpha_k, \beta_k)$ is the Bailey pair (\ref{BP2}) and $\chi=\chi_{(2,3,6p+1)}^{(1,1,2p+1)}$. For (\ref{main3eq}), it is
\begin{equation} \label{main3poly}
\sum_{k=0}^{N-1} (-1)^k q^{\binom{k+1}{2} + (p-1)(k^2 + k)} \alpha_k = -\sum_{n=0}^{2(6p-1)N} \chi(n) q^{\frac{n^2 - (6p-5)^2}{24(6p-1)}}
\end{equation}
where $(\alpha_k, \beta_k)$ is the Bailey pair (\ref{BP3}) and $\chi=\chi_{(2,3,6p-1)}^{(1,1,2p-1)}$. For (\ref{main4eq}), it is
\begin{equation} \label{main4poly}
\sum_{k=0}^{N-1} (-1)^k q^{\binom{k+1}{2} + (p-1)(k^2 + k)} \alpha_k = -\sum_{n=0}^{2(6p-1)N} \chi(n) q^{\frac{n^2 - (6p+1)^2}{24(6p-1)}}
\end{equation}
where $(\alpha_k, \beta_k)$ is the Bailey pair (\ref{BP4}) and $\chi=\chi_{(2,3,6p-1)}^{(1,1,2p)}$. For (\ref{main8eq}), it is
\begin{equation} \label{Hp}
(1-q) \sum_{k=0}^{N-1} (-1)^k q^{\binom{k+1}{2} + (p-1)(k^2 +k)} \alpha_k = \sum_{n=0}^{2(6p+1)N} \chi(n) q^{\frac{n^2 - (6p - 5)^2}{24(6p+1)}}
\end{equation}
where $(\alpha_k, \beta_k)$ is the Bailey pair (\ref{BP5}) and $\chi=\chi_{(2,3,6p+1)}^{(1,1,1)}$. For (\ref{main1eq}), it is
\begin{equation} \label{H1p}
q^{-1}(1-q) \sum_{k=0}^{N-1} (-1)^k q^{\binom{k+1}{2} + (p-1)(k^2 + k)} \alpha_k = \sum_{n=0}^{2(6p-1)N} \chi(n) q^{\frac{n^2 - (6p+5)^2}{24(6p-1)}}
\end{equation}
for $p \geq 2$ and
\begin{equation} \label{H11}
q^{-1}(1-q) \sum_{k=0}^{N-1} (-1)^k q^{\binom{k+1}{2}} \alpha_k = \sum_{n=0}^{10N} \chi(n) q^{\frac{n^2 - 11^2}{120}} - q^{-1} \lambda_N
\end{equation}
where
$$
\lambda_N = \lambda_N(q) := -2 +
\begin{cases}
 (-1)^{r} q^{\frac{15r^2 + r}{2}} +  (-1)^{r} q^{\frac{15r^2 - r}{2}} &\text{if $N=3r$,}\\
 (-1)^{r} q^{\frac{15r^2 + 11r + 2}{2}} &\text{if $N=3r+1$,}\\
(-1)^{r+1} q^{\frac{15r^2 + 19r + 6}{2}}  &\text{if $N=3r+2$}\\
\end{cases}
$$
for $p=1$. Here, $(\alpha_k, \beta_k)$ is the Bailey pair (\ref{BP6}) and $\chi=\chi_{(2,3,6p-1)}^{(1,1,1)}$. One can check that if $q=\zeta_N^{M}$, then $\lambda_N = \lambda_N(\zeta_N^{M}) = -1$. For (\ref{main5eq}), it is
\begin{equation} \label{H5p}
\sum_{k=0}^{N-1} (-1)^k q^{-\binom{k+1}{2}} (q^{p(k^2 + 2k)} \alpha_k)^{*}  = - \sum_{n=0}^{2(6p-1)N} \chi(n) q^{\frac{n^2 - (12p-5)^2}{24(6p-1)}}
\end{equation}
where $(\alpha_k, \beta_k)$ is the Bailey pair (\ref{BP7}) and $\chi=\chi_{(2,3,6p-1)}^{(1,1,3p-1)}$. For (\ref{main9eq}), it is
\begin{equation} \label{main9poly}
\sum_{k=0}^{N-1} (-1)^k q^{-\binom{k+1}{2}} (q^{p(k^2+2k)} \alpha_k)^*
= -\sum_{n=0}^{2(6p+1)N} \chi(n) q^{\frac{n^2 - (12p-1)^2}{24(6p+1)}}
\end{equation}
where $(\alpha_k, \beta_k)$ is the Bailey pair (\ref{BP8}) and $\chi=\chi_{(2,3,6p+1)}^{(1,1,3p)}$. For (\ref{main2eq}), it is
\begin{equation} \label{main2poly}
\begin{aligned}
\sum_{k=0}^{N-1} (-1)^{k} q^{-\binom{k+1}{2} + p(k+1)^2} \widehat{\alpha}_k
 + 1
 & + \frac{1}{2} (-1)^N q^{-\binom{N}{2}+ pN^2 }
 (\widehat{\alpha}_{N-1}-\gamma_{N-1}) \\
 & \qquad \qquad \qquad \qquad \qquad = -\sum_{n=0}^{2(6p-1)N}\chi(n) q^{\frac{n^2-1}{24(6p-1)}}
\end{aligned}
\end{equation}
where $(\alpha_k, \beta_k)$ is the Bailey pair (\ref{BP9}), 
$$
\gamma_k = \gamma_k(q) :=
\begin{cases}
       q^{3r^2-2r}+q^{3r^2-r}   &\text{if $k=3r-1$},\\
     -q^{3r^2+r}               &\text{if $k=3r$},\\
    -q^{3r^2+2r}              &\text{if $k=3r+1$}
\end{cases}
$$
and $\chi=\chi_{(2,3,6p-1)}^{(1,1,p)}$. Again, one can confirm that if $q=\zeta_N^{M}$, then $\gamma_{N-1} =\gamma_{N-1}(\zeta_N^M)= -1$ and $(-1)^N q^{-\binom{N}{2} + pN^2} = -1$. Finally, for (\ref{main10eq}), it is
\begin{equation} \label{main10poly}
\begin{aligned}
  \sum_{k=0}^{N-2}(-1)^k
  q^{-\binom{k+1}{2}+p(k+1)^2}\widehat{\alpha}_k
    -\frac 1 2(-1)^Nq^{-\binom N2+pN^2}\widehat{\alpha}_{N-1} -1
    -\frac{1}{2} (-1)^N q^{pN^2} \kappa_N \\ 
  =-\sum_{n=0}^{2(6p+1)N} \chi(n) q^{\frac{n^2-1}{24(6p+1)}}
 \end{aligned}\end{equation}
 where $(\alpha_k, \beta_k)$ is the Bailey pair (\ref{BP10}),
\begin{align*}
  \kappa_N = \kappa_N(q) := \begin{cases}
   -q^{\frac{r(3r-1)}{2}}-q^{\frac{r(3r+1)}{2}} &\text{if $N = 3r$,}\\
   q^{\frac{r(3r+1)}{2}} &\text{if $N= 3r+1$,}\\
   q^{\frac{(r+1)(3r+2)}{2}} &\text{if $N =3r+2$}\\
    \end{cases}
\end{align*}
and $\chi=\chi_{(2,3,6p+1)}^{(1,1,p)}$. In this case, if $q=\zeta_N^{M}$, then $\kappa_N = \kappa_N(\zeta_N^M) = (-1)^{N+1}$. We now take $q=\zeta_N^{M}$ in each of (\ref{main7poly})--(\ref{main10poly}), simplify and then combine with statements similar to (\ref{6step7}) (resulting from Corollary \ref{pieces}) to establish versions of (\ref{6step2}). After applying Lemma \ref{chiprop} (to obtain variants of (\ref{6step8})), the results follow by combining steps and multiplying by an appropriate factor. In total, this yields (\ref{main1eq})--(\ref{main5eq}) and (\ref{main7eq})--(\ref{main10eq}). 
\end{proof}

\section*{Acknowledgements}
The second and third authors were partially funded by the Irish Research Council Advanced Laureate Award IRCLA/2023/1934. The second author would like to thank the Okinawa Institute of Science and Technology for their hospitality and support during his visit from August 28 to September 15, 2023 as part of the TSVP Thematic Program ``Exact Asymptotics: From Fluid Dynamics to Quantum Geometry". It was during this visit where Professor Hikami kindly shared conjectures (\ref{h6pconj}) and (\ref{h7pconj}). Finally, the third author would like to thank the Max-Planck-Institut f{\"u}r Mathematik for their hospitality and support during the completion of this paper.

\end{document}